\newtheorem{thm}{Theorem}[section]
\newtheorem{cor}[thm]{Corollary}
\newtheorem{defn}[thm]{Definition}
\newtheorem{lem}[thm]{Lemma}
\newtheorem{rem}[thm]{Remark}
\newtheorem{prop}[thm]{Proposition}
\theoremstyle{remark} }
\numberwithin{thm}{section}
\numberwithin{equation}{section}
\newcommand{\myop}[1]{\operatorname{#1}}
\newcommand{\Span}{\myop{span}}
\newcommand{\Real}{\mathbb R}
\newcommand{\eps}{\varepsilon}
\newcommand{\la}{\langle}
\newcommand{\ra}{\rangle}
\newcommand{\A}{\mathcal{A}}
\newcommand{\B}{\mathcal{B}}
\newcommand{\Comp}{\mathbb{C}}
\newcommand{\D}{\mathcal{D}}
\newcommand{\Fc}{\mathcal{F}}
\newcommand{\g}{\mathfrak{g}}
\newcommand{\h}{\mathfrak{h}}
\newcommand{\Hi}{\mathcal{H}}
\newcommand{\n}{\mathbb{N}}
\newcommand{\tor}{\mathbb{T}}
\newcommand{\z}{\mathbb{Z}}
\newcommand{\Om}{\Omega}
\newcommand{\om}{\omega}
\begin{document}

\title[Analytic subalgebras of Beurling-Fourier algebras]{Analytic subalgebras of Beurling-Fourier algebras and complexification of Lie groups}

\author{Heon Lee}
\address{Heon Lee, Department of Mathematical Sciences and the Research Institute of Mathematics, Seoul National University, Gwanak-ro 1, Gwanak-gu, Seoul 08826, Republic of Korea}
\email{heoney93@gmail.com}

\author {Hun Hee Lee}
\address{Hun Hee Lee, Department of Mathematical Sciences and the Research Institute of Mathematics, Seoul National University, Gwanak-ro 1, Gwanak-gu, Seoul 08826, Republic of Korea}
\email{hunheelee@snu.ac.kr}

\maketitle

\begin{abstract}
In this paper, we focus on how we can interpret the actions of the elements in the Gelfand spectrum of a weighted Fourier algebra on connected Lie groups. They can be viewed as evaluations on specific points of the complexification of the underlying Lie group by restricting to a particular dense subalgebra, which we call an analytic subalgebra. We first introduce an analytic subalgebra allowing a ``local" solution for general connected Lie groups. We will demonstrate that a ``global" solution is also possible for connected, simply connected and nilpotent Lie groups through a different choice of an analytic subalgebra. Finally, we examine the case of the $ax+b$-group as an example of a non-nilpotent, non-unimodular Lie group with a ``global" solution.
\end{abstract}

\section{Introduction}

Variations using ``weights" are common techniques in numerous areas of mathematics. The same principle can be applied to the field of abstract harmonic analysis, where we study, for example, operator/function algebras on locally compact groups.
Recall that for a locally compact group $G$ the {\em Fourier algebra} $A(G)$ is defined as the space of all coefficient functions of the {\em left regular representation} $\lambda: G\to B(L^2(G))$:
    $$A(G) := \{u\in C_0(G): u(s) = \la \lambda(s) f, g\ra,\; s\in G,\; f,g\in L^2(G)\}.$$
The space $A(G)$ is equipped with the norm
    $$\|u\|_{A(G)} = \inf\{\|f\|_2\|g\|_2: u(s) = \la \lambda(s) f, g\ra \}$$
and it is a commutative Banach algebra with respect to point-wise multiplication.
The commutativity of $A(G)$ naturally leads us to look at ${\rm Spec}A(G)$, its {\em Gelfand spectrum}, which is known to be homeomorphic to $G$. In other words, we have ${\rm Spec}A(G) \cong G$ as topological spaces. See \cite[Section 2.3]{Kaniuth} for the details.
A typical proof of this fact is accompanied by the following interpretation, namely non-zero multiplicative linear functionals on $A(G)$ are nothing but evaluations on the points of $G$.

On the other hand, the Fourier algebra $A(G)$ is the dual object of the {\em convolution algebra} $L^1(G)$. This duality can be explicitly seen when the group $G$ is abelian;
more precisely, we have the Banach algebra isomorphism $A(G) \cong L^1(\widehat{G})$, where $\widehat{G}$ refers to the {\em dual group} of $G$. The dual object $L^1(G)$ has the ``weighted" version $L^1(G,w)$ for some {\em weight function}
(i.e. {\em submultiplicative function}) $w: G\to (0,\infty)$, which we call the {\em Beurling algebra}. There have been several attempts (\cite{LST12, LS12}) to develop a corresponding dual theory, namely ``weighted" versions of Fourier algebras $A(G,W)$ under the name of Beurling-Fourier algebras. 
The precise definition of a ``weight" $W$ is more involved than the concept of a weight function, so we refer to Section \ref{subsec-weights} for the details.

Since a Beurling-Fourier algebra $A(G,W)$ is still commutative, it is natural to be interested in its Gelfand spectrum ${\rm Spec}A(G,W)$. 
The results from \cite{LST12} and \cite{GLLST} showed that the structure of ${\rm Spec}A(G,W)$ for a Lie group $G$ is closely related to the {\em complexification} of $G$ and were able to determine ${\rm Spec}A(G,W)$ focusing on concrete examples. 
Finally, \cite{GT} succeeded in determining ${\rm Spec}A(G,W)$ in the case of a general connected Lie group $G$ and a weight $W$ coming from an abelian Lie subgroup using an abstract version of complexification (see Section \ref{subsec-abs-complexification} for details).

However, we still need more information on how to interpret the actions of the elements of ${\rm Spec}A(G,W)$ as certain evaluations.
Let us re-examine the case $G = \Real$ for a better illustration. Since $G = \Real$ is abelian, our model $A(G,W)$ becomes the Beurling algebra $L^1(\widehat{G},w) = L^1(\Real,w)$ for some weight function $w: \widehat{G} \cong \Real \to (0,\infty)$. One possible approach is to find a subalgebra $\A$ in $L^1(\widehat{G},w)$ satisfying the following:
\begin{enumerate}
\item the topological algebra $\A$ is continuously embedded in $L^1(\widehat{G},w)$ with a dense image,
\item its Gelfand spectrum (i.e. the space of all continuous characters) ${\rm Spec}\A \cong \Comp$, where the latter can be understood as a complexification of the real Lie group $\Real$ and
\item all elements in ${\rm Spec}L^1(\widehat{G},w)$ act on $\A$ as evaluations on the points of $\Comp$.
\end{enumerate}
In fact, the choice $\A = \Fc^\Real(C^\infty_c(\Real))$ satisfy all the above conditions.
Moreover, the algebra $\A = \Fc^\Real(C^\infty_c(\Real))$ is independent of the choice of the weight function $w$. Here, $\Fc^\Real$ refers to the Fourier transform in $\Real$. Condition (1) allows us to observe that
$${\rm Spec}L^1(\widehat{G},w) \subseteq {\rm Spec}\A$$
and Condition (2) and (3) lead us to specific forms of the elements of ${\rm Spec}L^1(\widehat{G},w)$ (in this case, exponential functions) so that we only need to check its boundedness with respect to the $L^1(\widehat{G},w)$-norm to determine ${\rm Spec}L^1(\widehat{G},w)$. See \cite[Section 1.1]{GLLST} for more details.

In \cite{GLLST} the authors, in principle, followed the same approach for the case $G=\mathbb{H}$, the Heisenberg group, with a certain choice of weight $W$, namely finding a subalgebra $\A$ in $A(G,W)$ satisfying the following:
\begin{enumerate}
\item the topological algebra $\A$ is continuously embedded in $A(G,W)$ with a dense image,
\item its Gelfand spectrum ${\rm Spec}\A \cong \mathbb{H}_\Comp$, where the latter is the {\em universal complexification} of $\mathbb{H}$ and
\item all elements in ${\rm Spec}A(G,W)$ act on $\A$ as evaluations on the points of $\mathbb{H}_\Comp$.
\end{enumerate}
The specific choice made in \cite{GLLST} was $\A = \Fc^{\Real^3}(C^\infty_c(\Real^3))$ based on the identification $\mathbb{H}\cong \Real^3$ as topological spaces that share the same translation-invariant measure, the Lebesgue measure.
The above choice of $\A$ looks quite similar to the case of $\Real$. However, detailed proofs for the required properties can only be completed with the help of additional subalgebras and subspaces (see \cite[Definition 6.4]{GLLST}) whose choice was quite technical. 
Furthermore, other groups, including the Euclidean motion group $E(2)$, have been examined in \cite{GLLST}, 
where the choice of $\A$ was fundamentally different from the case of $\mathbb{H}$. The technicality of the choice of the subalgebra $\A$ and its companions, as well as the dependence on the group, was the main obstacle in \cite{GLLST} in developing a general theory.

The main goal of this paper is to continue the approach in \cite{GLLST} to deal with the general cases covered in \cite{GT}. More precisely, for any connected Lie group and a weight $W$ coming from a connected abelian Lie subgroup of $G$, we hope to construct a subalgebra $\A$ of $A(G,W)$ that satisfies the following:
\begin{enumerate}
\item[(A1)] the topological algebra $\A$ is continuously embedded in $A(G,W)$ with a dense image,
\item[(A2)] its Gelfand spectrum ${\rm Spec}\A \cong G_\Comp$, where the latter is a complexification of $G$ and
\item[(A3)] all elements in ${\rm Spec}A(G,W)$ act on $\A$ as evaluations on the points of $G_\Comp$.
\end{enumerate}
Although completion of the above program in full generality is not available to us, we will find a subalgebra $\A$ that satisfies (A1) and a ``local" version (A3') of (A3) for certain choice of weight $W$ as follows in Section \ref{sec-local-sol}.

\begin{enumerate}
\item[(A3')] all elements in ${\rm Spec}A(G,W)$ act on $\A$ as evaluations on the points of an open neighborhood of $G$ within $G_\Comp$.
\end{enumerate}
Note that the main conclusion in Section \ref{sec-local-sol}, Theorem \ref{thm-eval-action}, depends on the results of \cite{GT} through Theorem \ref{thm-spectrum-extended-weight-general} below.

For connected, simply connected and nilpotent Lie groups $G$ we will find a subalgebra $\A$ that satisfies (A1), a ``partial" version (A2') and (A3") of (A2) and (A3), respectively, as follows in Section \ref{sec-global-sol} for any weight $W$ extended from an abelian subgroup.

\begin{enumerate}
\item[(A2')] its Gelfand spectrum ${\rm Spec}\A \cong G^\h$, where $G^\h$ is a properly embedded submanifold of $G_\Comp$ to be introduced in Section \ref{sec-global-sol} and
\item[(A3")] all elements in ${\rm Spec}A(G,W)$ act on $\A$ as evaluations on the points of $G^\h$.
\end{enumerate}
Note that the results in Section \ref{sec-global-sol} do not depend on the results of \cite{GT}.

This paper is organized as follows.
In Section \ref{sec-prelim} we collect basic materials on locally convex spaces, unbounded operators, and Lie theory. In Section \ref{subsec-weights} we review Fourier algebras and the concept of weight, which is followed by the description of Beurling-Fourier algebras in Section \ref{subsec-BF-alg}. The concept of abstract complexification of Lie groups will be presented in Section \ref{subsec-abs-complexification}.

Section \ref{sec-anal-subalg-density} focuses on the technical definition of our first model of analytic subalgebras using analytic vectors for the left regular representation. A ``local" solution for general connected Lie groups will be presented in Section \ref{sec-local-sol} after we clarify the analytic extensions that we focus on.

In Section \ref{sec-global-sol} we move our attention to connected, simply connected, and nilpotent Lie groups and we establish a ``global" solution via a different choice of analytic subalgebra. In the last section, we will examine the case of $ax+b$-group as an example of non-nilpotent, non-unimodular Lie group with a ``global" solution.

\section{Preliminaries}\label{sec-prelim}

\subsection{Locally convex spaces: tensor products, quotients and vector-valued integrals}

The analytic subalgebras we will construct in this paper lie beyond the scope of Banach spaces. We collect essential features (mainly from \cite[Section 18]{Kothe83}, \cite[Chapter III]{SchWol99} and \cite[Chapter 2]{Hor66} if not specified) of {\em topological vector spaces}, which will be frequently used later on. 

Recall that a Hausdorff topological vector space $X$ over $\Comp$ is called {\em locally convex} if it has a base of neighborhoods of $0$ consisting of convex sets, or equivalently, there is a family of semi-norms $\{p_\alpha\}_{\alpha\in A}$ on $X$ that determines its topology in a canonical way.
We say that a locally convex (topological vector) space $X$ is a {\em Fr\'echet space} if it is complete and metrizable. In this case we may take a countable family of semi-norms $\{p_n\}_{n\ge 0}$.

Let $X$ be a locally convex space whose topology is given by a family of semi-norms $\{p_\alpha\}_{\alpha\in A}$. A subspace $H$ of $X$ inherits a locally convex space structure via the subspace topology or,
equivalently, the topology given by the restrictions of the seminorms $\{(p_\alpha)|_H\}_{\alpha\in A}$.
When $H$ is closed, the associated quotient space $X/H$ is also locally convex and its topology is given by a family of induced semi-norms $\{p'_\alpha\}_{\alpha\in A}$, where
$$p'_\alpha(x+H) := \inf_{h\in H}p_\alpha(x+h),\;\;x\in X.$$
When $X$ is a Fr\'echet space, then $X/H$ is a Fr\'echet space as well.

For a family $\{X_\alpha\}_{\alpha \in A}$ of locally convex spaces, the topological product $X= \prod_{\alpha \in A}X_\alpha$ is locally convex again. More precisely, if $\{p^\alpha_\beta\}_\beta$ is a family of semi-norms that give the topology of $X_\alpha$, $\alpha \in A$, then the product topology of $X$ is given by the semi-norms
    $$\tilde{p}^\alpha_\beta(x) := p^\alpha_\beta(x_\alpha),\;\;x = (x_\alpha) \in X.$$

Note that the topological products of Banach spaces are always complete, which allows us to define the completion of a locally convex space. Recall that a locally convex space $X$ can be mapped onto a subspace $\tilde{X}$ of a topological product of Banach spaces through a homeomorphic linear isomorphism.
Then, the closure of $\tilde{X}$ in the product is the {\em completion} of $X$, which we denote by $\overline{X}$. When the topology of $X$ is given by a family of semi-norms $\{p_\alpha\}_{\alpha\in A}$, then the topology of $\overline{X}$ is given by a corresponding family of semi-norms $\{\bar{p}_\alpha\}_{\alpha\in A}$, where $\bar{p}_\alpha$ is the unique continuous extension of $p_\alpha$ to $\overline{X}$, making $\overline{X}$ a locally convex space.

Suppose now that for a family $\{X_\alpha\}_{\alpha \in A}$ of locally convex spaces, there are a locally convex space $E$ and injective continuous maps $J_\alpha :X_\alpha \to E$, $\alpha \in A$. This allows us to consider the intersection $\bigcap_{\alpha \in A} X_\alpha$ given by
    $$\bigcap_{\alpha \in A} X_\alpha := \{x_\alpha \in \prod_{\alpha \in A}X_\alpha : J_\alpha(x_\alpha) = J_\beta(x_\beta),\;\; \forall \alpha,\beta \in A \}.$$
Note that the choice of $E$ and $J_\alpha$ is typically apparent in the context, and the map $J_\alpha$ is usually omitted. The intersection $\bigcap_{\alpha \in A} X_\alpha$ carries the locally convex space structure as a subspace of $\prod_{\alpha \in A}X_\alpha$.


Let $p$ and $q$ be semi-norms of $X$ and $Y$, respectively. Then, the {\em tensor product semi-norm} $p\otimes q$ on the {\em algebraic tensor product} $X\odot Y$ is given by
    $$p\otimes q(u) := \inf\left\{\sum^n_{i=1} p(x_i)q(y_i): u = \sum^n_{i=1} x_i \otimes y_i \right\},\;\; u\in X\odot Y.$$
For two locally convex spaces $X$ and $Y$ whose topologies are generated by families of semi-norms $\{p_\alpha\}_{\alpha\in A}$ and $\{q_\beta\}_{\beta\in B}$, respectively, the family of semi-norms $\{p_\alpha \otimes q_\beta\}_{\alpha\in A, \beta \in B}$ give rise to a locally convex topology on $X\odot Y$.
We call its completion the {\em complete projective tensor product} of $X$ and $Y$, which will be denoted by $X\otimes_\pi Y$. See \cite[Section III.6]{SchWol99} for the details.
When $X$ and $Y$ are Frech\'{e}t spaces, then $X\otimes_\pi Y$ is also a Fr\'{e}chet space. 

Let $T: X\to Y$ be a linear map between two locally convex spaces $X$ and $Y$ whose topologies are generated by families of semi-norms $\{p_\alpha\}_{\alpha\in A}$ and $\{q_\beta\}_{\beta\in B}$, respectively. It is well-known that $T$ is continuous if and only if for every $\beta$ there are $\alpha_1,\cdots, \alpha_n \in A$ and $C>0$ such that
    $$q_\beta(T(x))\le C(p_{\alpha_1}(x) + \cdots + p_{\alpha_n}(x)),\;\;x\in X.$$
We say that two locally convex spaces are {\em isomorphic} if there is a bi-continuous linear bijection between them.

A Hausdorff topological vector space $\A$ is called a {\em topological algebra} if there is an associative algebra multiplication $m: \A \times \A \to \A$, which is bilinear and separately continuous. We say that $\A$ is a {\em Fr\'{e}chet algebra}
if $\A$ is a Fr\'{e}chet space.
Note that (\cite[Chapter VII, Proposition 1]{Wa71}) the multiplication map of a Fr\'{e}chet algebra is automatically jointly continuous.

\subsubsection{Pettis integral}

Let $X$ be a fixed Fr\'echet space and $(\Om, \Sigma, \mu)$ be a measure space. A function $f:\Om \to X$ is called {\em weakly measurable} if $(f(\cdot), \varphi)$ is $\Sigma$-measurable for any $\varphi\in X^*$, or equivalently (see \cite[Theorem 1]{Th75}) $(f(\cdot), \varphi)$ is $\Sigma$-measurable for any $\varphi\in S$ for some total (i.e. separates points of $X$) subset $S$ of $X^*$. A weakly measurable function $f:\Om \to X$ is called {\em Pettis integrable} if (1) $(f(\cdot), \varphi)$ is $\mu$-integrable for any $\varphi\in X^*$ and (2) for any $A\in \Sigma$ there is $x_A\in X$ such that
    $$(x_A, \varphi) = \int_A (f, \varphi)\,d\mu,\;\; \varphi\in X^*.$$
We call $x_A$ the {\em Pettis integral} of $f$ on $A$ and denote it by $\int_A f\,d\mu$. For a lower semi-continuous semi-norm $p$ on $X$ and $A\in \Sigma$ the function $p(f(\cdot))$ is $\Sigma$-measurable and we have
    $$p(\int_A f\,d\mu)\le \int_A p(f)\,d\mu.$$

\subsection{Unbounded operators}
We collect some of the basic materials on unbounded operators excerpted from  \cite[Chapter~1--7]{Schm12}.

Recall that a linear map $T: \D(T) \subset \Hi \to \Hi$ acting on a Hilbert space $\Hi$ with domain $\D(T)$ is said to be {\em closed}
if its graph ${\rm gra}(T) = \{(x,T(x)): x\in \D(T)\}$ is closed in the direct sum $\Hi\oplus \Hi$ of Hilbert spaces.
We say that a linear map $S:\D(S) \subseteq \Hi \to \Hi$ is an {\em extension} of $T$ (we write $T\subseteq S$) if $\D(T) \subseteq \D(S)$ and $S|_{\D(T)} = T$.
We say that $T$ is {\em closable} if it has a closed extension. In this case, we denote the smallest closed extension (with respect to the inclusion of the corresponding domain) by $\overline{T}$, the {\em closure} of $T$.

A densely defined operator $T: \D(T)\subseteq \Hi \to \Hi$ (that is, $\D(T)$ is dense in $\Hi$) has a well-defined adjoint (linear and closed) operator $T^*: \D(T^*)\subseteq \Hi \to \Hi$. We say that such a map $T$ is {\em self-adjoint} if $T = T^*$, in which case $T$ is also closed. We say that such a map $T$ is {\em essentially self-adjoint} if the closure $\overline{T}$ is self-adjoint.
A densely defined operator $T: \D(T) \subseteq \Hi \rightarrow \Hi$ is called {\em normal} if $T$ is closed and $TT^* = T^* T$, in which case we also have $\D(T) = \D(T^*)$.

A normal operator $T: \D(T)\subseteq \Hi \to \Hi$ can be written as a spectral integral. More precisely, there is a projection-valued spectral measure
$$E = E_T: \B(\Comp) \to B(\Hi)$$ such that $\displaystyle T = \int_\Comp \lambda \,dE(\lambda)$, where $\B(\Comp)$ refers to the Borel $\sigma$-algebra on $\Comp$. For $v,w \in \Hi$ we get a complex measure $E_{v,w}$ given by
$$E_{v,w}(A) = \la E(A)v, w \ra,\;\; A\in \B(\Comp).$$
The operator $T$ is then self-adjoint if and only if the spectral measure $E$ is supported on the real line $\Real$. This integral representation leads to a single-variable Borel functional calculus. In fact, for any Borel measurable function $f:\Comp \to \Comp$ we define
$$f(T) := \int_\Comp f(\lambda) \,dE(\lambda)$$
with domain $\displaystyle \D(f(T)) =\{v\in \Hi: \int_\Comp |f(\lambda)|^2 \,dE_v(\lambda)<\infty\}$, where we simply write $E_v = E_{v,v}$. In particular, the ``absolute value" $|T|$ is defined by $\displaystyle |T| := \int_\Comp |\lambda| \,dE(\lambda)$. We can easily see that $|T|^n = |T^n|$ and $\|T^n v\| = \|\big| T \big|^n v\|$ for all $n\ge 1$ and $v\in \D(T^n) = \D(|T|^n)$. Let us record an elementary lemma regarding series expressions for operators defined by functional calculus.

\begin{lem}\label{lem-abs-conv}
Let $T :\D(T)\subseteq \Hi \to \Hi$ be a normal operator. For a vector $v\in \bigcap_{n\ge 1}\D(|T|^n)$ with $\displaystyle \sum_{n\ge 0}\frac{\|\big|T\big|^nv\|}{n!}<\infty$, we have $v\in \D(e^T)$ and 
    $$e^T v = \sum_{n\ge 0}\frac{T^nv}{n!},$$
where the series converges absolutely in $\Hi$.
\end{lem}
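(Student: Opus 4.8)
The plan is to first reduce the hypothesis to a statement about the series $\sum_{n}T^nv/n!$ alone, and then identify its limit with $e^Tv$ by cutting off the spectrum of $T$. Since $v\in\bigcap_{n\ge 1}\D(|T|^n)=\bigcap_{n\ge 1}\D(T^n)$ and $\|T^nv\|=\||T|^nv\|$ (recorded just before the lemma), the assumption $\sum_{n\ge 0}\||T|^nv\|/n!<\infty$ says exactly that $\sum_{n\ge 0}\|T^nv\|/n!<\infty$. As $\Hi$ is complete, the series $\sum_{n\ge 0}T^nv/n!$ therefore converges absolutely to some $w\in\Hi$; write $w_N=\sum_{n=0}^N T^nv/n!$ for its partial sums. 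It remains to show $v\in\D(e^T)$ and $e^Tv=w$.

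To do this I would truncate the spectrum. Let $E=E_T$ be the spectral measure of $T$ and, for $R>0$, put $P_R=E(\{\lambda:|\lambda|\le R\})$; these bounded projections commute with every $T^n$ and with $e^T$, and $P_Rv\to v$ because $E_v$ is a finite positive measure with $E_v(\Comp)=\|v\|^2$. Writing $f_N(\lambda)=\sum_{n=0}^N\lambda^n/n!$, functional calculus gives $f_N(T)P_Rv=P_Rw_N$, which converges to $P_Rw$ as $N\to\infty$. On the other hand $f_N(\lambda)\to e^\lambda$ uniformly on the compact disc $\{|\lambda|\le R\}$, which carries the whole of the finite measure $E_{P_Rv}$, so $f_N(T)P_Rv\to e^TP_Rv$ as well. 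Comparing the two limits yields $e^TP_Rv=P_Rw$ for every $R$.

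The decisive step is to deduce $v\in\D(e^T)$ from this. Since $E_{P_Rv}$ is the restriction of $E_v$ to $\{|\lambda|\le R\}$, we have
\[
\int_{\{|\lambda|\le R\}}|e^{\lambda}|^2\,dE_v(\lambda)=\|e^TP_Rv\|^2=\|P_Rw\|^2\le\|w\|^2,
\]
a bound uniform in $R$; letting $R\to\infty$ and invoking monotone convergence gives $\int_\Comp|e^\lambda|^2\,dE_v<\infty$, i.e. $v\in\D(e^T)$. Finally $e^TP_Rv=P_Re^Tv\to e^Tv$, while $e^TP_Rv=P_Rw\to w$, so $e^Tv=w=\sum_{n\ge 0}T^nv/n!$, as claimed. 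I expect the only genuine obstacle to be this domain membership: the crude estimate $|e^\lambda|^2=e^{2\Re\lambda}\le e^{2|\lambda|}$ followed by a term-by-term expansion would produce the series $\sum_n 2^n\|T^nv\|/n!$, whose convergence is strictly stronger than the hypothesis, so one must control the truncated integrals uniformly rather than bound the full integrand directly.
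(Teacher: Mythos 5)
Your proof is correct, but it follows a genuinely different route from the paper's. The paper first proves the stronger statement $v\in\D(e^{|T|})$ by applying the monotone convergence theorem to the squared partial sums $\bigl(\sum_{k=0}^n|\lambda|^k/k!\bigr)^2\nearrow e^{2|\lambda|}$, whose integrals against $E_v$ equal $\|\sum_{k=0}^n|T|^kv/k!\|^2$ and are bounded by $\bigl(\sum_{n\ge0}\||T|^nv\|/n!\bigr)^2$ via the triangle inequality --- this is exactly the device that converts the hypothesis into the domain condition with no loss (the obstacle you flag at the end). It then identifies $e^Tv$ with the series only weakly, by dominated convergence for the complex measures $E_{v,w}$ (using $e^{|\lambda|}\in L^1(E_{v,w})$, which is where $v\in\D(e^{|T|})$ is needed), and upgrades to norm convergence from the absolute convergence of the series. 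You instead define $w=\sum_n T^nv/n!$ first, establish $e^TP_Rv=P_Rw$ on spectral truncations, and extract the domain membership from the uniform bound $\int_{\{|\lambda|\le R\}}|e^\lambda|^2\,dE_v=\|P_Rw\|^2\le\|w\|^2$ together with monotone convergence; the identification $e^Tv=w$ then falls out by letting $R\to\infty$. Every step checks out ($P_R$ commutes with $T^n$ on $\D(T^n)$ and with $e^T$ on $\D(e^T)$, $E_{P_Rv}$ is supported in the closed disc of radius $R$, and $f_N\to e^\lambda$ uniformly there). What you trade away is the stronger conclusion $v\in\D(e^{|T|})$, which your argument never produces but the lemma does not claim; what you gain is that the domain condition comes from a soft uniform bound rather than the paper's specific squared-partial-sum identity. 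Your closing aside about the naive term-by-term expansion is slightly imprecise about which series it yields, but it is only motivation and does not affect the proof.
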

\begin{proof}
For such a vector $v\in \Hi$ we first show that $v\in \D(e^{|T|})$.
Let $E$ be the spectral measure on $\Comp$ for the operator $T$. We apply the monotone convergence theorem to get
\begin{align*}
    \int_\Comp e^{2|\lambda|}\, dE_v(\lambda)
    & = \lim_{n\to \infty} \int_\Comp (\sum^n_{k=0}\frac{|\lambda|^k}{k!})^2\, dE_v(\lambda)\\
    & = \lim_{n\to \infty} \|\sum^n_{k=0}\frac{|T|^kv}{k!}\|^2 \le (\sum_{n\ge 0}\frac{\|\big|T\big |^nv\|}{n!})^2 <\infty,
\end{align*}
which means that $v\in \D(e^{|T|})$. Then, for any $w\in \Hi$ we have $e^{|\lambda|} \in L^1(E_{v,w})$, which allows us to use the Lebesgue dominated convergence theorem for
    $$\la e^T v, w \ra = \int_\Comp e^\lambda\, dE_{v,w}(\lambda) = \lim_{n\to \infty} \la \sum^n_{k=0}\frac{T^kv}{k!}, w \ra$$
so that we have
    $$e^T v = \sum_{n\ge 0}\frac{T^nv}{n!},$$
where the series converges absolutely since $\|\big| T \big|^nv\| = \|T^nv\|, n\ge 0$.
\end{proof}

A family of densely defined normal operators $T_1,\dots,T_n$ is said to be {\it strongly commuting} if each pair of
elements $E_{T_i}(A)$ and $E_{T_j}(B)$ commutes, where $A,B\in \B(\Comp)$. In this case, there is a spectral measure $E: \B(\Comp^n) \to B(\Hi)$ such that $\displaystyle T_i = \int_{\Comp^n} \lambda_i \,dE(\lambda_1, \cdots, \lambda_n)$, $1\le i\le n$. Similarly, we have a multi-variable Borel functional calculus, namely, for any Borel measurable function $g:\Comp^n \to \Comp$ we define
    $$g(T_1,\cdots, T_n) := \int_{\Comp^n} g(\lambda_1, \cdots, \lambda_n) \,dE(\lambda_1, \cdots, \lambda_n).$$

Let $T: \D(T) \subseteq \Hi \rightarrow \Hi$ and $S: \D(S) \subseteq \Hi \rightarrow \Hi$ be two densely defined closable operators. Then, the densely defined operator
    $$T \odot S : \D(T) \odot \D(S) \subseteq \Hi \otimes \Hi \rightarrow \Hi \otimes \Hi,\;\; \xi \otimes \eta\mapsto T\xi \otimes S\eta$$
is known to be closable, whose closure will be denoted as $T \otimes S$.
It is known that $(T \otimes S)^* = T^* \otimes S^*$, which implies that if $T$ and $S$ are self-adjoint, then $T \otimes S$ is also self-adjoint.

Let $T$ be a self-adjoint operator on $\Hi$ with the associated spectral measure $E: \B (\Real) \rightarrow B(\Hi)$. Then, the map $E \otimes I : \B (\Real) \rightarrow B(\Hi),\;\; A \mapsto E_T (A) \otimes I$ is the spectral measure of the self-adjoint operator $T \otimes I$. Note that if $S$ is another self-adjoint operator on $\Hi$, then $T \otimes I$ and $I \otimes S$ strongly commute and we have $T\otimes S = (T\otimes I)\cdot (I\otimes S)$.

\subsection{Lie groups, algebras and representations}\label{subsec-Lie}

In this paper, we focus on a connected real Lie group $G$ equipped with the associated real Lie algebra $\g \cong \Real^d$ and the {\it exponential map}
    $$\exp : \g \to G.$$
We will choose an abelian Lie subgroup $H$ of G with the associated Lie algebra $\mathfrak{h}$. We fix a basis $\{X_1, \cdots, X_d\}$ for $\g$ such that $\mathfrak{h} = {\rm span}\{X_1, \cdots, X_k\}$ for some $1\le k\le d$. We assume that there is a fixed {\it complexification} $G_\Comp$, which is a complex Lie group that contains $G$ as a real Lie subgroup, and the associated Lie algebra is the complexified Lie algebra $\g_\Comp = \g + i\g$.
For $X = x_1X_1 + \cdots + x_dX_d \in \g_\Comp = \g + i\g$, $(x_j)^d_{j=1}\subseteq \Comp$, we consider its $L^2$-norm
    $$|X| := (|x_1|^2 + \cdots + |x_d|^2)^{\frac{1}{2}}.$$

Let $\pi: G \to B(\Hi_\pi)$ be a strongly continuous unitary representation of $G$, which induces a representation $d \pi$ of the Lie algebra $\g$ as follows: for each $X\in \g$ we define
    $$d \pi(X)v := \frac{d}{dt}\pi(\exp tX)v|_{t=0}$$
for $v\in \Hi^\infty_\pi$, the Fr\'{e}chet space of {\em $C^\infty$-vectors} (i.e. $v\in \Hi^\infty_\pi$ means that $g\in G \mapsto \pi(g)v$ is $C^\infty$) equipped with the semi-norms
     $$\rho^\pi_n(v) = \rho_n(v) := \left(\sum_{1\le j_1,\cdots,j_n\le d}\|d \pi(X_{j_1}\cdots X_{j_n})v\|^2 \right)^{\frac{1}{2}},\;\; v\in \Hi^\infty_\pi,$$
where $d \pi(X_{j_1}\cdots X_{j_n}) = d \pi(X_{j_1})\,\cdots \,d(X_{j_n})$.
Note that we have chosen $L^2$-type of semi-norms $\rho_n$, whilst $L^\infty$-type semi-norms
    $$\tilde{\rho}_n(v) = \sup_{1\le j_1,\cdots,j_n\le d}\|d \pi(X_{j_1}\cdots X_{j_n})v\|$$
were used in the literature (cf. \cite{Goodman_entire}), which are equivalent as follows:
    $$\tilde{\rho}_n(v) \le \rho_n(v) \le d^{\frac{n}{2}}\tilde{\rho}_n(v),\;\; v\in \Hi^\infty_\pi.$$
It is well-known (cf. \cite[Chapter~0]{Taylor_harmonic}) that $i\cdot d \pi (X)$ is an essentially self-adjoint operator acting on $\Hi_\pi$. We write $i\cdot \partial \pi(X)$, the self-adjoint closure of $i\cdot d \pi(X)$ so that $\partial \pi(X)$ is a normal operator acting on $\Hi_\pi$. Stone's theorem says that we have
$$\pi(\exp(X)) = e^{\partial\pi(X)},$$
where the latter is given by the functional calculus.

\subsubsection{Analytic and entire vectors}\label{sec-analytic-entire-vectors}

For $s>0$ and $v\in \Hi^\infty_\pi$ we recall the norm
\begin{equation}\label{eq-seminorm-E}
E^\pi_s(v) = E_s(v) := \sum_{n\ge 0}\frac{s^n}{n!}\rho_n(v).
\end{equation}
Then, the space
    $$\Hi^a_{\pi,t} := \{v\in \Hi^\infty_\pi: E_s(v)<\infty, 0<s<t\}.$$
becomes a Fr\'{e}chet space for each $t>0$.
Indeed, a Cauchy sequence $(v_j)_j \subseteq \mathcal{H}_{\pi, t} ^a $, with respect to the norms $(E_s)_{0 < s < t}$, is Cauchy in the Fr\'{e}chet space $\mathcal{H}_\pi ^\infty$ and hence has a limit $v$ in this space. Fix $0<s<t$. For $^\forall \varepsilon >0$, we can choose a positive integer $N$ such that $i,j \ge N$ implies
    $$\sum_{n=0} ^\infty \frac{s^n}{n!} \rho_n (v_i - v_j) = E_s(v_i - v_j) < \varepsilon.$$
Applying Fatou's lemma with respect to the index $i$, we obtain
    $$\sum_{n=0} ^\infty \frac{s^n}{n!} \rho_n (v - v_j) \leq \varepsilon,$$
proving $E_s(v) < \infty$ and $\displaystyle \lim_{j \rightarrow \infty}E_s(v- v_j) = 0$. Since $0<s<t$ was arbitrary, we see that $v\in \mathcal{H}_{\pi, t}^a$ and $v_j \rightarrow v$ in $\mathcal{H}_{\pi, t}^a$.
Note that the norms
    $$\displaystyle \tilde{E}_s(v) := \sum_{n\ge 0}\frac{s^n}{n!}\tilde{\rho}_n(v)$$
were used more frequently in the literature (cf. \cite{Goodman_entire}) and we have
    $$\tilde{E}_s(v)\le E_s(v)\le \tilde{E}_{s\sqrt{d}}(v),\;\; v\in \Hi^a_{\pi,s\sqrt{d}}.$$

We say that $v\in \Hi$ is an {\em analytic vector} for $\pi$ if $v\in \Hi^a_{\pi,t}$ for some $t>0$.
The space $\Hi^a_{\pi, \infty}$ of all {\em entire vectors} for $\pi$ is defined by
    $$\Hi^a_{\pi, \infty} := \bigcap_{t>0}\Hi^a_{\pi,t},$$
which is a Frech\'{e}t space with respect to the topology given by the family of norms $(E_s)_{s>0}$.

When $\pi=\lambda$, the left regular representation on $G$, we will 
simply write $\Hi^\infty$, $\Hi^a_t$ and $\Hi^a_\infty$ instead of $\Hi^\infty_\lambda$, $\Hi^a_{\lambda,t}$ and $\Hi^a_{\lambda,\infty}$, respectively.

\begin{rem}
\begin{enumerate}
    \item Our choices of using the $L^2$-type of (semi-)norms $\rho_n$ and $|\cdot|$ on $\g$ are suitable for the definition and algebraic properties of the analytic subalgebras to be defined in Section \ref{sec-anal-subalg-density}.
    \item For any unitary representation $\pi: G \to B(\Hi_\pi)$ of $G$ the space $\Hi^a_{\pi,t}$ is known to be dense in $\Hi_\pi$ for small enough $t>0$. See \cite[Theorem~2.1]{Goodman_entire}.
\end{enumerate}

\end{rem}

The representation $d \pi$ of $\g$ on $\Hi^\infty _\pi$ can easily be extended to the complexification $\g_\Comp$ by
    $$d \pi(Z) := d \pi(X) + i \cdot d \pi(Y)$$
for $Z = X + i Y \in \g_\mathbb{C}$ with $X, Y \in \g$.

\begin{prop}\label{prop-analytic evaluation-domain}
\begin{enumerate}
    \item For $Z \in \g_\Comp$ and $v\in \Hi^\infty_\pi$ we have
    \begin{equation}\label{eq-rho-comp}
        \rho_n(d \pi(Z)^m v) \leq |X|^m \rho_{n+m}(v).
    \end{equation}

    \item The space $\Hi_{\pi,t} ^a $ $(t>0)$ is invariant under $d \pi (Z)$ for any $Z \in \g_\Comp$.

    \item For $Z\in \g_\Comp$ with $|Z|<t$ and $v \in \Hi^a_{\pi, t}$ the series $\displaystyle \sum_{n=0} ^\infty \frac{1}{n!} d\pi(Z)^n v$
converges absolutely.
\end{enumerate}
\end{prop}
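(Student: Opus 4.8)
The plan is to prove part (1) first, since (2) and (3) then follow by routine manipulation of the exponential series defining $E_s$. Throughout I write $Z=\sum_{j=1}^d z_jX_j\in\g_\Comp$ with $z_j\in\Comp$, so that $d\pi(Z)=\sum_{j=1}^d z_j\,d\pi(X_j)$ and $|Z|^2=\sum_{j=1}^d|z_j|^2$.

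For part (1) I would expand the power $d\pi(Z)^m$ multilinearly and exploit the $L^2$-shape of the seminorms $\rho_n$. Fix an $n$-tuple $j=(j_1,\dots,j_n)$ and, for each $m$-tuple $i=(i_1,\dots,i_m)$ of indices in $\{1,\dots,d\}$, set $\xi_{j,i}:=d\pi(X_{j_1})\cdots d\pi(X_{j_n})\,d\pi(X_{i_1})\cdots d\pi(X_{i_m})v$. Then
$$d\pi(X_{j_1}\cdots X_{j_n})\,d\pi(Z)^m v=\sum_i z_{i_1}\cdots z_{i_m}\,\xi_{j,i},$$
and reading the right-hand side as the pairing of the scalar family $(z_{i_1}\cdots z_{i_m})_i$ with the vector family $(\xi_{j,i})_i$, the Cauchy--Schwarz inequality gives
$$\norm{d\pi(X_{j_1}\cdots X_{j_n})\,d\pi(Z)^m v}^2\le\Big(\sum_i|z_{i_1}\cdots z_{i_m}|^2\Big)\Big(\sum_i\norm{\xi_{j,i}}^2\Big)=|Z|^{2m}\sum_i\norm{\xi_{j,i}}^2,$$
where I used $\sum_i|z_{i_1}\cdots z_{i_m}|^2=\big(\sum_\ell|z_\ell|^2\big)^m=|Z|^{2m}$. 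Summing over all $n$-tuples $j$ collapses $\sum_j\sum_i\norm{\xi_{j,i}}^2$ into $\rho_{n+m}(v)^2$ by the very definition of $\rho_{n+m}$, and taking square roots yields $\rho_n(d\pi(Z)^m v)\le|Z|^m\rho_{n+m}(v)$.

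For part (2) I would take $v\in\Hi^a_{\pi,t}$ and fix $0<s<t$; since each $d\pi(X_j)$, hence $d\pi(Z)$, preserves $\Hi^\infty_\pi$, it remains only to check $E_s(d\pi(Z)v)<\infty$. Applying (1) with $m=1$ and reindexing $k=n+1$,
$$E_s(d\pi(Z)v)=\sum_{n\ge0}\frac{s^n}{n!}\rho_n(d\pi(Z)v)\le|Z|\sum_{n\ge0}\frac{s^n}{n!}\rho_{n+1}(v)=\frac{|Z|}{s}\sum_{k\ge1}k\,\frac{s^k}{k!}\rho_k(v).$$
Choosing $s<s'<t$ and using that $M:=\sup_{k\ge1}k(s/s')^k<\infty$ because $s/s'<1$, I can dominate the last sum by $(M/s)|Z|\,E_{s'}(v)<\infty$. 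As $0<s<t$ was arbitrary, $d\pi(Z)v\in\Hi^a_{\pi,t}$.

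For part (3), absolute convergence means $\sum_{n\ge0}\frac1{n!}\norm{d\pi(Z)^n v}<\infty$. Specializing part (1) to outer order $0$ and power $n$ gives $\rho_0(d\pi(Z)^n v)\le|Z|^n\rho_n(v)$, i.e.\ $\norm{d\pi(Z)^n v}\le|Z|^n\rho_n(v)$ since $\rho_0=\norm{\cdot}$, whence
$$\sum_{n\ge0}\frac1{n!}\norm{d\pi(Z)^n v}\le\sum_{n\ge0}\frac{|Z|^n}{n!}\rho_n(v)=E_{|Z|}(v),$$
which is finite because $|Z|<t$ and $v\in\Hi^a_{\pi,t}$. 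The only substantive step is part (1); everything else is bookkeeping with the exponential weights. The crucial point --- and the reason for preferring the $L^2$-type seminorms $\rho_n$ over the $L^\infty$-type $\tilde\rho_n$ --- is that it makes the Cauchy--Schwarz split in (1) exact, so the scalar contribution is precisely $|Z|^{2m}$ with no dimensional constant; with $\tilde\rho_n$ one would instead accumulate factors of $d$ at each application.
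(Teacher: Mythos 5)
Your proof is correct and follows essentially the same approach as the paper: the key step is the Cauchy--Schwarz inequality exploiting the $L^2$-shape of $\rho_n$, giving exactly the factor $|Z|^m$ (note the paper's statement has a typo, $|X|^m$ for $|Z|^m$). The only cosmetic difference is that you prove the general-$m$ estimate in one multilinear expansion where the paper does $m=1$ and iterates, and your part (3) reuses part (1) with $n=0$ where the paper redoes the Cauchy--Schwarz directly; both are sound.
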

\begin{proof}
(1) Let $Z = z_1 X_1 + \cdots + z_d X_d$. The first assertion follows from
\begin{align*}
\lefteqn{\rho_n (d \pi(Z) v) }\\
& = \left( \sum_{1 \le j_1 , \cdots, j_n \le d} \| d \pi (X_{j_1} \cdots X_{j_n} Z) v \|^2 \right)^{\frac{1}{2}}\\
& \leq \left( \sum_{1 \le j_1 , \cdots, j_n \le d} \left[ \sum_{i=1} ^d |z_i| \cdot \| d \pi (X_{j_1} \cdots X_{j_n} X_i ) v \| \right]^2 \right)^{\frac{1}{2}}\\
& \leq |Z| \; \rho_{n+1} (v),
\end{align*}
where we have used the Cauchy-Schwartz inequality with respect to the index $i$ at the last inequality.

\vspace{0.5cm}

(2) For $v \in \Hi_{\pi, t} ^a$ we fix $0 < s < t$. We choose $s< r < t$ and $C>0$ such that $(n+1) s^n< C r^{n+1}$ for all $n \ge 1.$
Then we have
\begin{align*}
    \lefteqn{\sum_{n=0} ^\infty \frac{s^n}{n!} \rho_n \big( d \pi (Z) v \big) \leq |Z| \sum_{n=0} ^\infty \frac{s^n}{n!} \rho_{n+1} (v)} \\
    \;\;&= |Z| \sum_{n=0} ^\infty \frac{s^n(n+1)}{(n+1)!} \rho_{n+1} (v) < |Z| \sum_{n=0} ^\infty \frac{(n+1)s^n}{(n+1)!} \rho_{n+1} (v) \\
    \;\;\;\;&= C |Z| \sum_{n=0} ^\infty \frac{r^{n+1}}{(n+1)!} \rho_{n+1} (v) < \infty,
\end{align*}
which shows $d \pi (Z) \left( \Hi_{\pi, t} ^a \right) \subseteq \left( \Hi_{\pi,t} ^a \right)$.

\vspace{0.5cm}

(3)
We only need to note the following.
\begin{align*}
\lefteqn{\sum_{n=0} ^\infty \frac{1}{n!} \| d\pi(Z)^n v \| } \\
& \leq \sum_{n=0} ^\infty \frac{1}{n!} \sum_{1 \leq j_1 , \cdots , j_n \leq d} |z_{j_1}|\, \cdots \,| z_{j_n} | \cdot \| d \pi (X_{j_1} \cdots X_{j_d} )  v \|\\
&  \leq \sum_{n=0} ^\infty \frac{1}{n!} \left(\sum_{1 \leq j_1 , \cdots , j_n \leq d} (|z_{j_1} |\, \cdots \,| z_{j_d} |)^2 \right)^{\frac{1}{2}} \left( \sum_{1 \leq j_1 , \cdots , j_n \leq d} \| d\pi(X_{j_1} \cdots X_{j_n}) v \|^{2} \right)^{\frac{1}{2}} \\
& = \sum_{n=0} ^\infty \frac{1}{n!} |Z| ^n \rho_{n} (v) = E_{|Z|} (v) < \infty.
\end{align*}
\end{proof}

Note that \eqref{eq-rho-comp} is a replacement for the inequality \cite[(1.2)]{Goodman_entire}.
Now we define the operator $e^{d\pi(Z)}$ (as in \cite[Proposition 2.2]{Goodman_entire}) by
\begin{equation}\label{eq-rep-comp}
e^{d\pi(Z)}v := \sum_{n=0} ^\infty \frac{1}{n!} d\pi(Z)^n v
\end{equation}
for $Z\in \g_\Comp$ with $|Z|<t$ and $v \in \Hi^a_{\pi, t}$. When $Z$ is actually an element of $\g$, the above definition $e^{d\pi(Z)}v$ coincides with $\pi(\exp(Z))v = e^{\partial\pi(Z)}v$ by Lemma \ref{lem-abs-conv}. 
The following is a replacement for \cite[Proposition 2.2, Corollary 2.1]{Goodman_entire}.

\begin{prop}\label{prop-holomorphic-ext}
For $Z\in \g_\Comp$ with $|Z|<t$ we have
    $$e^{d\pi(Z)}: \Hi^a_{\pi, t} \to \Hi^a_{\pi, t - |Z|}$$
continuously.
For a vector $v \in \Hi^a_{\pi, t}$
the map
	$$Z \mapsto e^{d \pi(Z)} v \in \Hi_\pi$$
is holomorphic on the region $|Z|<t$ in $\g_\Comp$.
\end{prop}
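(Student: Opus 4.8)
The plan is to establish the two assertions separately, deriving both from the single estimate $\rho_n(d\pi(Z)^m v) \le |Z|^m \rho_{n+m}(v)$ of Proposition \ref{prop-analytic evaluation-domain}(1). \textbf{Continuity.} First I would fix $Z$ with $|Z|<t$ and an arbitrary $0<s<t-|Z|$, and bound the $E_s$-norm of each term of the defining series $e^{d\pi(Z)}v = \sum_{m\ge 0}\frac{1}{m!}d\pi(Z)^m v$. Summing the estimate of Proposition \ref{prop-analytic evaluation-domain}(1) over $n$ gives $E_s(d\pi(Z)^m v) \le |Z|^m \sum_{n\ge 0}\frac{s^n}{n!}\rho_{n+m}(v)$; then summing over $m$ and reindexing by $k=n+m$ (legitimate since every term is nonnegative) collapses the double sum via the binomial theorem into
\begin{equation*}
\sum_{m\ge 0}\frac{1}{m!}E_s(d\pi(Z)^m v) \le \sum_{k\ge 0}\frac{\rho_k(v)}{k!}(s+|Z|)^k = E_{s+|Z|}(v).
\end{equation*}
Since $s+|Z|<t$ and $v\in \Hi^a_{\pi,t}$, the right-hand side is finite, so the series converges absolutely with respect to every seminorm $E_s$, $0<s<t-|Z|$; by completeness of the Fr\'echet space $\Hi^a_{\pi,t-|Z|}$ it converges there. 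This simultaneously shows $e^{d\pi(Z)}v \in \Hi^a_{\pi,t-|Z|}$ and yields $E_s(e^{d\pi(Z)}v) \le E_{s+|Z|}(v)$, which is exactly the continuity criterion: the target seminorm $E_s$ is dominated by the source seminorm $E_{s+|Z|}$ (a valid seminorm on $\Hi^a_{\pi,t}$ because $s+|Z|<t$).

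\textbf{Holomorphy.} For fixed $v\in\Hi^a_{\pi,t}$ I would expand $d\pi(Z)^n = \sum_{j_1,\ldots,j_n} z_{j_1}\cdots z_{j_n}\,d\pi(X_{j_1}\cdots X_{j_n})$ in the coordinates $Z=\sum_i z_i X_i$, so that $e^{d\pi(Z)}v$ becomes a power series in $(z_1,\ldots,z_d)\in\Comp^d$ with coefficients in $\Hi_\pi$. The Cauchy--Schwartz estimate already used in the proof of Proposition \ref{prop-analytic evaluation-domain}(3) bounds the sum of the norms of the degree-$n$ terms by $\frac{|Z|^n}{n!}\rho_n(v)$; on a closed ball $\{|Z|\le r\}$ with $r<t$ this is dominated by $\frac{r^n}{n!}\rho_n(v)$, whose sum over $n$ is $E_r(v)<\infty$. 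The Weierstrass $M$-test then gives absolute and uniform convergence of the series on $\{|Z|\le r\}$, hence local uniform convergence of $Z\mapsto e^{d\pi(Z)}v$ on the region $\{|Z|<t\}$.

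Finally I would conclude holomorphy from this local normal convergence. The partial sums are $\Hi_\pi$-valued polynomials in $(z_1,\ldots,z_d)$, hence holomorphic, and a locally uniform limit of Hilbert-space-valued holomorphic maps is again holomorphic; equivalently, for each $w\in\Hi_\pi$ the scalar function $Z\mapsto \la e^{d\pi(Z)}v, w\ra$ is a locally uniform limit of holomorphic polynomials and therefore holomorphic, so $Z\mapsto e^{d\pi(Z)}v$ is weakly holomorphic and thus holomorphic by Dunford's theorem. The one point demanding care -- and the place I expect to be the genuine obstacle -- is precisely this passage from the scalar (weak) statement to strong $\Hi_\pi$-valued holomorphy on the several-variable domain $\{|Z|<t\}\subseteq\Comp^d$; everything preceding it is a direct consequence of the estimates in Proposition \ref{prop-analytic evaluation-domain}.
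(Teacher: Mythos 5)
Your proof is correct and follows essentially the same route as the paper: the continuity assertion rests on the identical double-sum estimate $\sum_{n,m}\frac{s^n}{n!}\frac{|Z|^m}{m!}\rho_{n+m}(v)=E_{s+|Z|}(v)$, and the holomorphy assertion on the same expansion into a $\Comp^d$-power series with the Cauchy--Schwarz bound $\frac{|Z|^n}{n!}\rho_n(v)$ and locally uniform convergence. The only (harmless) organizational difference is that you sum the $E_s$-seminorms of the terms directly and invoke completeness of $\Hi^a_{\pi,t-|Z|}$, whereas the paper first establishes convergence of the series in $\Hi^\infty_\pi$ and then interchanges the operators $d\pi(X_{j_1}\cdots X_{j_n})$ with the sum; and the weak-to-strong holomorphy passage you flag as a worry is standard (a locally uniform limit of polynomial, hence holomorphic, Hilbert-space-valued maps is holomorphic) and is not a genuine obstacle.
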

\begin{proof}
Fix $0 < s < t- |Z|$ and let $v \in \Hi^a_{\pi, t}$. We first check that the series $\displaystyle e^{d \pi(Z)} v=\sum_{n=0} ^\infty \frac{1}{n!} d \pi(Z)^n v$ converges in $\mathcal{H}_\pi ^\infty$. Indeed, for any $m\ge 1$ we know that $d \pi (Z)^m v \in \mathcal{H}^a _{\pi, t}$ by Proposition~\ref{prop-analytic evaluation-domain} (2).
Using \eqref{eq-rho-comp}, we obtain
\begin{align*}
\lefteqn{\sum_{n=0} ^\infty \frac{1}{n!} \rho_m \big(d \pi(Z)^n v \big)} \\
& \leq \sum_{n=0} ^{m-1} \frac{1}{n!} \rho_m \big(d \pi(Z)^n v \big)  + \sum_{n=m} ^\infty \frac{1}{n!} |Z|^{n-m} \rho_{n} \big(d \pi(Z)^m v \big) <\infty,
\end{align*}
which gives us the wanted conclusion.

Since $d \pi(X_j)$ are continuous linear operators on $\mathcal{H}_\pi ^\infty$, this implies
\begin{align*}
\lefteqn{E_s \left(e^{d \pi (Z)} \right)}\\
& = \sum_{n=0} ^\infty \frac{s^n}{n!} \left( \sum_{1 \leq j_1 , \cdots , j_n \leq d} \| d \pi (X_{j_1} \cdots X_{j_n}) e^{d \pi (Z)} v \|^2 \right)^{\frac{1}{2}} \\
& = \sum_{n=0} ^\infty \frac{s^n}{n!} \left( \sum_{1 \leq j_1 , \cdots , j_n \leq d} \| \sum_{m=0} ^\infty \frac{1}{m!} d \pi (X_{j_1} \cdots X_{j_n}) d \pi (Z)^m v \|^2 \right)^{\frac{1}{2}} \\
& \leq \sum_{n=0} ^\infty \frac{s^n}{n!} \sum_{m=0} ^\infty \frac{1}{m!} \left( \sum_{1 \leq j_1 , \cdots , j_n \leq d} \|d \pi (X_{j_1} \cdots X_{j_n}) d \pi (Z)^m v \|^2 \right)^{\frac{1}{2}} \\
& \leq \sum_{n,m=0} ^\infty \frac{s^n}{n!}\frac{|Z|^m}{m!} \rho_{n+m} (v) = \sum_{l=0} ^\infty \frac{(s+|Z| )^l}{l!} \rho_l (v) = E_{s+ |Z|} (v) <\infty
\end{align*}
where we have used \eqref{eq-rho-comp} in the last inequality.

By Proposition~\ref{prop-analytic evaluation-domain} (3) we can see that the series
    $$e^{d \pi (Z)} v = \sum_{n=0} ^\infty \frac{1}{n!} d \pi(Z)^n v = \sum_{n=0} ^\infty \frac{1}{n!} \sum_{1 \leq j_1 , \cdots j_n \leq d} z_{j_1} \cdots z_{j_n} [d\pi(X_{j_1}) \cdots d\pi(X_{j_n}) v]$$
is absolutely and uniformly convergent on compact subsets of the open disc $\{ (z_1, \cdots, z_d ) \in \mathbb{C}^d : |z_1|^2 + \cdots + |z_d|^2 < t^2 \}$. Hence, it is holomorphic with respect to $Z$ on this disc.
\end{proof}


\subsection{Fourier algebras and weights on the dual of $G$}\label{subsec-weights}

Recall that the Fourier algebra $A(G)$ for a locally compact group $G$ is given by
$$A(G) = \{u\in C_0(G): u(s) = \la \lambda(s) f, g\ra,\; s\in G,\; f,g\in L^2(G)\}.$$
Since we have
    $$\la \lambda(s) f, g\ra = \bar{g}*\check{f}(s),\;\; f,g\in L^2(G),$$
where $\check{f}(s) = f(s^{-1})$, $s\in G$,    
another description of $A(G)$ follows. Consider the contractive map
\begin{equation}\label{eq-conv-map}
\Phi : L^2(G) \otimes_\pi L^2(G) \to C_0(G),\;\; g\otimes f \mapsto g*\check{f},   
\end{equation}
where we recover $A(G)$ by ${\rm Ran}(\Phi)$ equipped with the quotient Banach space structure coming from $(L^2(G) \otimes_\pi L^2(G))/{\rm Ker}\Phi$.

We record here how the group inversion affects coefficient functions of the left regular representation. For $u(s) = \la \lambda(s) f, \bar{g}\ra = g*\check{f}(s),\; s\in G$, we have
    $$\check{u}(s) = \la \lambda(s^{-1}) f, \bar{g}\ra = \overline{\la \lambda(s) \bar{g}, f\ra} = \la \lambda(s) g, \bar{f}\ra = f*\check{g}(s),\;\; s\in G.$$
Thus, we have
\begin{equation}\label{eq-check-flip}
[\Phi(g\otimes f)]^\lor = \Phi(f\otimes g),\;\;g,f\in L^2(G).    
\end{equation}

The dual space of $A(G)$ is well understood. Indeed, we have
    $$A(G)^*\cong VN(G),$$
the group von Neumann algebra acting on $L^2(G)$, with respect to the following duality:
    $$(T, \bar{g}*\check{f}) = \la Tf, g\ra,\;\; T\in VN(G), f,g\in L^2(G).$$
The group structure of $G$ is encoded in $VN(G)$ via the following map
    $$\Gamma : VN(G) \to VN(G\times G)\cong VN(G)\overline{\otimes}VN(G),\; \lambda(x) \mapsto \lambda(x) \otimes \lambda(x),$$
which we call the {\em co-multiplication} on $VN(G)$.

For an abelian group $G$ we have $A(G)\cong L^1(\widehat{G})$ and $VN(G)\cong L^\infty(\widehat{G})$ so that a bounded below weight function $w:\widehat{G} \to (0,\infty)$ has a reciprocal $w^{-1}$ belonging to $L^\infty(\widehat{G})$. This motivates the following definition of ``weight inverse", which is a part of the definition given in \cite[Section 2]{GT}.

\begin{defn}\label{def-weight}
A positive element $\om\in VN(G)$ satisfying ${\rm Ker}\,\om = \{0\}$ and
    $$\om^2 \otimes \om^2 \le \Gamma(\om^2)$$
is called a {\bf weight inverse}.
We call the inverse $W = \om^{-1}$, which is an unbounded positive operator acting on $L^2(G)$ and affiliated to $VN(G)$, a {\bf weight on the dual of $G$}.
\end{defn}

\begin{rem}
Note that Definition \ref{def-weight} focuses only on the case that $\om$ is positive with ${\rm Ker}\,\om = \{0\}$ compared to the definition of ``weight inverse" in \cite[Section 2]{GT}. Moreover, the associated inverse $W$ is ``bounded below" in the sense of \cite[Section 3.2.1]{GLLST}, which means that the case of weights $W$ with unbounded inverse is not covered in this paper nor in \cite{GT}. Note also that conditions (b) and (c) of \cite[Definition 3.3]{GLLST} are automatically satisfied by \cite[Lemma 2.1]{GT}.
\end{rem}

There is a canonical way to transfer a weight on the dual of a closed abelian subgroup $H$ of $G$ to a weight on the dual of $G$. Let
    $$\iota: VN(H) \to VN(G),\;\; \lambda_H(x)\mapsto \lambda(x)$$
be the canonical $*$-homomorphic embedding, where $\lambda_H$ is the left regular representations of $H$. A weight $W_H$ on the dual of $H$ in the sense of Definition \ref{def-weight} induces a weight on the dual of $G$ through the embedding $\iota$, i.e. $W = \iota(W_H)$ is a weight on the dual of $G$ by (3) of \cite[Proposition 3.25]{GLLST} or \cite[Proposition 4.14]{GT}. Here, $\iota(W_H)$ is the unbounded operator explained in \cite[Section~2.1]{GLLST}.

When $H$ is, moreover, a connected abelian Lie subgroup of $G$ we know that it is isomorphic to $\Real^j \times \tor^{k-j}$ for some $0\le j\le k$ as a Lie group. Then, the transferred operator $\Fc_H \circ W_H \circ (\Fc_H)^{-1}$, where $\Fc_H: L^2(H) \to L^2(\widehat{H})$ is the $L^2$-Fourier transform on $H$, is affiliated to the abelian von Neumann algebra $L^\infty(\widehat{H})$ so that there is an essentially bounded below, Borel measurable function
$w: \widehat{H} \to (0,\infty)$
such that
    $$\Fc_H \circ W_H \circ (\Fc_H)^{-1} = M_w,$$
where $M_w$ is the multiplication operator with respect to $w$ acting on $L^2(\widehat{H})$.
Since $W_H$ is a weight on the dual of $H$ we can see that $w$ is a weight function on $\widehat{H}\cong \Real^j \times \z^{k-j}$, which means that it is sub-multiplicative. From \cite[Proposition 3.26]{GLLST} we can find $\beta>0$ and $C>0$ such that
\begin{equation}\label{eq-domination}
    w \le C\cdot w_\beta,
\end{equation}
where $w_\beta$ is the weight function on $\widehat{H}\cong \Real^j \times \z^{k-j}$ given by
\begin{equation}\label{eq-w-beta}
w_\beta(x_1,\cdots, x_j, n_{j+1},\cdots, n_k) := e^{\beta (|x_1| + \cdots + |x_j| + |n_{j+1}| + \cdots + |n_k|)}.
\end{equation}
Recall \cite[Proposition 3.26]{GLLST} to see that
\[
W=w(i\partial\lambda(X_1), \cdots, i\partial\lambda(X_n)),
\]
where the RHS is given by functional calculus. 

For $\beta > 0$ the exponential weight $W_\beta$ on the dual of $G$ of order $\beta$ is defined to be the case where we use $w = w_\beta$.
Then, the above observation tells us that we have
\begin{equation}\label{eq-exp-weight-beta}
    W_\beta := \iota((\Fc_H)^{-1} \circ M_{w_\beta} \circ \Fc_H) =  e^{\beta(|\partial \lambda(X_1)| + \cdots + |\partial \lambda(X_k)|)}.
\end{equation}


\subsection{Beurling-Fourier algebras}\label{subsec-BF-alg}

Let $W$ be a weight on the dual of $G$. We define the weighted Fourier algebra as follows.
\begin{defn}
We define the space
    $$A(G,W) := W^{-1}A(G) = \{W^{-1}\phi: \phi\in A(G)\}\subseteq A(G)$$
equipped with the norm
    $$\|W^{-1}\phi\|_{A(G,W)}:= \|\phi\|_{A(G)}.$$
Here, $W^{-1}\phi$ is the element in $A(G)$ given by
    $$(T, W^{-1}\phi) := (TW^{-1}, \phi),\;\; T\in VN(G).$$
\end{defn}
We know that $A(G,W)$ is a Banach algebra with respect to point-wise multiplication and $A(G,W)^*\cong VN(G)$ with the duality (see \cite[Definition~3.11]{GLLST} or \cite[Proposition 2.3]{GT})
\begin{equation}\label{eq-duality-BF-alg}
(T, W^{-1}\phi)_W := (T,\phi),\;\; \phi\in A(G), T\in VN(G).
\end{equation}

\begin{rem}\label{rem-weighted-Fourier-element}
When $\phi = g*\check{f}$, $g,f\in L^2(G)$ we can immediately check that
    $$W^{-1}\phi = g*(W^{-1}f)^\lor.$$
Thus, for $\xi\in \D(W)$ and $\eta \in L^2(G)$ we have $\eta*\check{\xi} = W^{-1}(\eta*(W\xi)^\lor)$ and
    $$\| \eta*\check{\xi} \|_{A(G,W)} = \|\eta*(W\xi)^\lor\|_{A(G)} \leq \|\eta\|_2 \|W \xi \|_2.$$
\end{rem}

\subsection{Abstract complexification}\label{subsec-abs-complexification}

Let $G$ be a locally compact group. The co-multiplication
    $\Gamma: VN(G) \to VN(G\times G),\; \lambda(s) \mapsto \lambda(s)\otimes \lambda(s)$
can be implemented by the multiplicative unitary $U\in B(L^2(G\times G))$ by
\begin{equation}\label{eq-co-multi}
\Gamma(T) = U^*(I\otimes T)U,
\end{equation}
where $U\xi(s,t) := \xi(ts,t)$, $s,t\in G$, $\xi\in L^2(G\times G)$. The expression \eqref{eq-co-multi} allows us to extend $\Gamma$ to $\overline{VN(G)}$, the space of all closed densely defined operators affiliated to $VN(G)$, via the same formula.

Following the universal complexification model for compact, connected Lie groups by Chevalley (\cite[III. 8]{BtD}), McKennon suggested a model of complexification $G_\Comp$ for a general locally compact group $G$ in \cite{McK2}, where he used the group $W^*$-algebra $W^*(G)$. Recently, Giselsson and Turowska in \cite{GT} developed a modified version of McKennon's model for $G_\Comp$ replacing $W^*(G)$ with $VN(G)$ as follows.

\begin{defn}
The $\lambda$-complexification $G_{\Comp, \lambda}$ of $G$ is defined in \cite{GT} by
    $$G_{\Comp, \lambda} :=\{ T \in \overline{VN(G)}: \Gamma(T) = T\otimes T,\; T\neq 0\}.$$
\end{defn}

While the above definition applies to general locally compact groups, we have a ``{\em Cartan decomposition}" of the complexification for connected Lie groups. The following is \cite[Proposition 3.4]{GT}.

\begin{prop}
Let $G$ be a connected Lie group with the associated Lie algebra $\g$. Then, we have
    $$G_{\Comp, \lambda} =\{ \lambda(s)e^{i\partial \lambda(X)}: s\in G,\; X\in \g\}.$$
\end{prop}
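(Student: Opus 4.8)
The plan is to prove the two inclusions separately; the inclusion ``$\supseteq$'' is a direct verification, while the content lies in ``$\subseteq$'', which I would obtain from a polar decomposition argument. Throughout I use the crucial structural fact that, by \eqref{eq-co-multi}, the extension of $\Gamma$ to $\overline{VN(G)}$ is nothing but conjugation by the fixed unitary $U$ (tensored with the identity): $\Gamma(T) = U^*(I\otimes T)U$. Consequently $\Gamma$ is multiplicative and $*$-preserving on affiliated operators and commutes with Borel functional calculus and with polar decomposition, since each of these operations is preserved by conjugation by a unitary.

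For ``$\supseteq$'', I would take $T = \lambda(s)e^{i\partial\lambda(X)}$ and note that $e^{i\partial\lambda(X)}$ is a positive, injective operator affiliated to $VN(G)$ while $\lambda(s)$ is unitary, so $T\neq 0$ is affiliated to $VN(G)$. Writing $A = i\partial\lambda(X)$ (a self-adjoint operator), Stone's theorem gives $e^{itA} = \lambda(\exp(-tX))$, hence $\Gamma(e^{itA}) = e^{itA}\otimes e^{itA} = e^{it(A\otimes I + I\otimes A)}$ for all $t\in\Real$, where I used that $A\otimes I$ and $I\otimes A$ strongly commute. Since $\Gamma(e^{itA}) = e^{it\Gamma(A)}$, uniqueness of the self-adjoint generator forces $\Gamma(A) = A\otimes I + I\otimes A$, and applying the exponential functional calculus yields $\Gamma(e^{A}) = e^{A}\otimes e^{A}$, i.e. $\Gamma(e^{i\partial\lambda(X)}) = e^{i\partial\lambda(X)}\otimes e^{i\partial\lambda(X)}$. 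Multiplicativity of $\Gamma$ together with $\Gamma(\lambda(s)) = \lambda(s)\otimes\lambda(s)$ then gives $\Gamma(T) = T\otimes T$.

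For ``$\subseteq$'', I would take $T\in G_{\Comp,\lambda}$ with polar decomposition $T = V|T|$, where $V$ is a partial isometry and $|T|$ is positive self-adjoint, both affiliated to $VN(G)$. Since $|T\otimes T| = |T|\otimes|T|$ with matching initial spaces, the factorisation $T\otimes T = (V\otimes V)(|T|\otimes|T|)$ is the polar decomposition of $T\otimes T = \Gamma(T)$; as $\Gamma$ preserves polar decomposition, uniqueness yields $\Gamma(V) = V\otimes V$ and $\Gamma(|T|) = |T|\otimes|T|$. The nonzero bounded element $V$ then satisfies $\Gamma(V) = V\otimes V$, i.e. $u\mapsto (V,u)$ is a nonzero character of $A(G)$; since ${\rm Spec}\,A(G)\cong G$ with characters given by point evaluations, $V = \lambda(s)$ for some $s\in G$. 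In particular $V$ is unitary, so $P := |T|$ is injective with dense range, and its imaginary powers $P^{it} = e^{it\log P}$ form a strongly continuous one-parameter unitary group. Applying the functional calculus $\zeta\mapsto\zeta^{it}$ to $\Gamma(P) = P\otimes P$ gives $\Gamma(P^{it}) = P^{it}\otimes P^{it}$, so by the previous step $P^{it} = \lambda(s_t)$ for some $s_t\in G$. From $P^{i(t+u)} = P^{it}P^{iu}$ and injectivity of $\lambda$ we get $s_{t+u} = s_ts_u$, while strong continuity of $t\mapsto P^{it}$ and the fact that $\lambda$ is a homeomorphism onto its image show $t\mapsto s_t$ is continuous. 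Hence $t\mapsto s_t$ is a one-parameter subgroup, so $s_t = \exp(tY)$ for a unique $Y\in\g$, giving $e^{it\log P} = \lambda(\exp(tY)) = e^{t\partial\lambda(Y)}$. Matching generators via Stone's theorem yields $\log P = -i\partial\lambda(Y)$, whence $P = e^{-i\partial\lambda(Y)} = e^{i\partial\lambda(X)}$ with $X = -Y$, and therefore $T = VP = \lambda(s)e^{i\partial\lambda(X)}$.

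The main obstacle is the careful handling of the positive part $P = |T|$: establishing that its imaginary powers are group-like unitaries, passing to the associated one-parameter subgroup, and the continuity argument that produces $Y\in\g$ with $s_t = \exp(tY)$. The operator-algebraic manipulations with the extension of $\Gamma$ to unbounded affiliated operators—multiplicativity and compatibility with functional calculus and polar decomposition—look delicate but are in fact clean, resting entirely on the observation that $\Gamma$ is conjugation by a single fixed unitary.
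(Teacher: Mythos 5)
Your proposal is correct. Note, however, that the paper does not prove this proposition at all: it is imported verbatim as \cite[Proposition 3.4]{GT}, so there is no in-paper argument to compare against. Your polar-decomposition proof is essentially the standard one (going back to McKennon's treatment of the $W^*(G)$-complexification and used in the cited reference): the partial isometry part is a nonzero group-like element of $VN(G)$, hence a character of $A(G)$, hence some $\lambda(s)$; the positive part has group-like imaginary powers, which yield a continuous one-parameter subgroup $t\mapsto \exp(tY)$ of $G$ and hence, by Stone's theorem, the identification $|T| = e^{i\partial\lambda(X)}$. The steps you assert without proof --- that $\Gamma(T)=U^*(I\otimes T)U$ is compatible with polar decomposition and Borel functional calculus of affiliated operators, that $|T\otimes T| = |T|\otimes|T|$ with $V\otimes V$ as the correct partial isometry, and that $\lambda$ is an SOT-homeomorphism of $G$ onto $\lambda(G)$ --- are all standard and correctly deployed, so I see no genuine gap; only these routine verifications would need to be written out in a fully self-contained account.
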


In the cases we focus on in this paper we know that ${\rm Spec}A(G,W)$ can be realized as elements of $G_{\Comp, \lambda}$ in the following sense.

\begin{thm}\label{thm-spectrum-extended-weight-general}
Let $G$ be a connected Lie group and $H$ be a connected abelian Lie subgroup of $G$. Let $W_H$ be a weight on the dual of $H$ and $W = \iota(W_H)$ be the extended weight on the dual of $G$. Then, we have
\begin{align*}
{\rm Spec}A(G,W)
& = \{ \lambda(s)e^{i\partial \lambda_G(X)}W^{-1} \in VN(G):\\
& \quad \quad s\in G,\, X\in \mathfrak{h},\, e^{i\partial \lambda_H(X)}W^{-1}_H \in {\rm Spec}A(H,W_H) \}.
\end{align*}
Furthermore, for $W=W_\beta$ from \eqref{eq-exp-weight-beta} we have
$${\rm Spec}A(G,W_\beta) = \{\lambda(s)e^{i\partial\lambda(X)}W^{-1}_\beta \in VN(G): s\in G, X\in \mathfrak{h}, |X|_\infty \le \beta\},$$
where $|X|_\infty = \max_{j=1} ^d |x_j|$\, for $X = x_1 X_1 + \cdots + x_d X_d \in \g$.
\end{thm}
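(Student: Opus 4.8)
The plan is to identify each element of ${\rm Spec}A(G,W)$ with a closed densely defined operator affiliated to $VN(G)$ and to read off its shape from the complexification $G_{\Comp,\lambda}$ together with a boundedness constraint imposed by the weight. First I would recall that, under the twisted duality \eqref{eq-duality-BF-alg}, a nonzero multiplicative functional on $A(G,W)$ is represented by a bounded operator $T\in VN(G)$, and that the pointwise multiplicativity of the character is equivalent --- via the co-multiplication $\Gamma$ and the submultiplicativity $\omega^2\otimes\omega^2\le\Gamma(\omega^2)$ of the weight inverse $\omega=W^{-1}$ --- to the requirement that the affiliated operator $\sigma:=TW$ satisfy $\Gamma(\sigma)=\sigma\otimes\sigma$; that is, $\sigma\in G_{\Comp,\lambda}$ and $T=\sigma W^{-1}\in VN(G)$. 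This is the abstract description of the spectrum from \cite{GT} (cf. also \cite{LST12, GLLST}).

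Applying the Cartan decomposition \cite[Proposition 3.4]{GT} I would write $\sigma=\lambda(s)e^{i\partial\lambda(X)}$ with $s\in G$ and $X\in\g$, so that every candidate character has the form $\lambda(s)e^{i\partial\lambda(X)}W^{-1}$; since $\lambda(s)$ is unitary, membership in $VN(G)$ is equivalent to boundedness of $e^{i\partial\lambda(X)}W^{-1}$. The decisive reduction is to show that this forces $X\in\h$. Here I would use that $W=\iota(W_H)$ is a function of the strongly commuting self-adjoint operators $i\partial\lambda(X_1),\dots,i\partial\lambda(X_k)$ associated with the abelian directions $\h=\Span\{X_1,\dots,X_k\}$, while $i\partial\lambda(X)$ is itself self-adjoint and unbounded; writing $X=X_\h+X'$ with $X'$ in a complement of $\h$, the factor coming from $X'$ grows in a direction the weight cannot damp, so boundedness of $e^{i\partial\lambda(X)}W^{-1}$ forces $X'=0$. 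For $X\in\h$ all relevant operators strongly commute, $e^{i\partial\lambda(X)}W^{-1}=\iota\bigl(e^{i\partial\lambda_H(X)}W_H^{-1}\bigr)$, and since $\iota$ is an isometric normal embedding, boundedness on $L^2(G)$ is equivalent to boundedness on $L^2(H)$; for the abelian group $H$ the latter is exactly the statement $e^{i\partial\lambda_H(X)}W_H^{-1}\in{\rm Spec}A(H,W_H)$, giving the first displayed formula.

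For the exponential weight $W=W_\beta$ I would make the last condition explicit using \eqref{eq-exp-weight-beta}. Passing to the Fourier picture on $H\cong\Real^j\times\tor^{k-j}$, the strongly commuting operators $P_\ell:=i\partial\lambda_H(X_\ell)$ become multiplications by the dual coordinates ranging over $\widehat{H}\cong\Real^j\times\z^{k-j}$, and for $X=x_1X_1+\cdots+x_kX_k\in\h$ the multivariable functional calculus gives
$$e^{i\partial\lambda_H(X)}W_\beta^{-1}=\exp\Bigl(\sum_{\ell=1}^{k}\bigl(x_\ell P_\ell-\beta|P_\ell|\bigr)\Bigr).$$
This is the operator of multiplication by $\prod_{\ell}\exp(x_\ell t_\ell-\beta|t_\ell|)$ on the joint spectrum, which is bounded if and only if $x_\ell t_\ell-\beta|t_\ell|$ stays bounded above for each $\ell$, i.e. if and only if $|x_\ell|\le\beta$ for every $\ell$; this is precisely $|X|_\infty\le\beta$. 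Since every bounded operator of the form $\sigma W_\beta^{-1}$ with $\sigma\in G_{\Comp,\lambda}$ does define a character, this yields the second formula.

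I expect the main obstacle to be the reduction $X\in\g\Rightarrow X\in\h$: the operators $i\partial\lambda(X)$ and the weight $W$ fail to commute when $X\notin\h$, so the heuristic that ``the weight only damps the $\h$-directions'' must be upgraded into a rigorous unboundedness estimate for the non-commuting product $e^{i\partial\lambda(X)}W^{-1}$. This is the technical heart of the statement and the point at which I would lean on the boundedness analysis of \cite{GT}.
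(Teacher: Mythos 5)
Your proposal is correct and follows essentially the same route as the paper: the paper disposes of the first assertion by citing \cite[Theorem 4.23]{GT} (noting only that the simple-connectedness hypothesis there is redundant), and your sketch of that argument --- characters as group-like elements $\sigma W^{-1}$ with $\sigma\in G_{\Comp,\lambda}$, the Cartan decomposition, and the reduction to $X\in\h$ --- is precisely the content of that citation, with the one genuinely hard step (ruling out $X\notin\h$) deferred to \cite{GT} exactly as you acknowledge. Your treatment of the $W_\beta$ case is the same computation as the paper's: reduce to ${\rm Spec}\,L^1(\widehat H,w_\beta)$ and check that $e^{\sum_\ell x_\ell t_\ell}\le C\,e^{\beta\sum_\ell|t_\ell|}$ on $\Real^j\times\z^{k-j}$ holds iff $|X|_\infty\le\beta$.
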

\begin{proof}
The first assertion is \cite[Theorem 4.23]{GT}. Note that a careful examination of the proof tells that the assumption of simple connectedness of $G$ in \cite[Theorem 4.23]{GT} is redundant. The second assertion boils down to the problem of determining ${\rm Spec}L^1(\widehat{H},w_\beta)$. Then, we only need to find $X = x_1 X_1 + \cdots + x_k X_k \in \mathfrak{h}$ such that there is $C>0$ such that
$$e^{t_1x_1 + \cdots + t_jx_j + n_{j+1}x_{j+1} + \cdots + n_kx_k}\le C\cdot e^{\beta (|t_1| + \cdots + |t_j| + |n_{j+1}| + \cdots + |n_k|)}$$
for any $(t_1, \cdots, t_j, n_{j+1},\cdots, n_k) \in \Real^j \times \z^{k-j}\cong \widehat{H}$. Thus, we end up with the condition $|X|_\infty \le \beta$.
\end{proof}

\begin{rem}\label{rem-spec-duality}
\begin{enumerate}
\item In Theorem \ref{thm-spectrum-extended-weight-general} the condition $$e^{i\partial \lambda_H(X)}W^{-1}_H \in {\rm Spec}A(H,W_H)$$
implies that $e^{i\partial \lambda(X)}W^{-1} \in {\rm Spec}A(G,W)$, and consequently we have $$\D(W)\subseteq \D(e^{i\partial \lambda_G(X)}).$$

\item Elements in ${\rm Spec}A(G,W)$ can be identified as elements of $G_{\Comp, \lambda}$ via the injective map
        $$\lambda(s)e^{i\partial \lambda_G(X)}W^{-1} \mapsto \lambda(s)e^{i\partial \lambda_G(X)}.$$
    The above identification has been used in \cite[Remark 4.8 and Theorem 4.23]{GT}

    \item For $\xi\in \D(W)\subseteq \D(e^{i\partial \lambda_G(X)})$ and $\eta \in L^2(G)$ we have
        $$\eta*\check{\xi} = W^{-1}(\eta*(W\xi)^\lor)$$
    so that the duality \eqref{eq-duality-BF-alg} tells us that
    \begin{align}\label{eq-spec-duality}
    (\lambda(s)e^{i\partial \lambda_G(X)}W^{-1}, \eta*\check{\xi})
    & = (\lambda(s)e^{i\partial \lambda_G(X)}W^{-1}, \eta*(W\xi)^\lor) \\
    & = \la \lambda(s)e^{i\partial \lambda_G(X)}W^{-1} \cdot W\xi, \bar{\eta} \ra \nonumber\\
    & =\la \lambda(s)e^{i\partial \lambda_G(X)}\xi, \bar{\eta} \ra.\nonumber
    \end{align}
\end{enumerate}

\end{rem}

\section{Analytic subalgebras and their density}\label{sec-anal-subalg-density}

Let $G$ be a connected real Lie group
with the associated Lie algebra $\g \cong \Real^d$ as in Section \ref{subsec-Lie} and Section \ref{subsec-weights}.
We fixed a basis $\{X_1, \cdots , X_d\}$ for $\g$ such that $\mathfrak{h} = \Span \{ X_1, \cdots , X_k\}$, $k\le d$, 
is an abelian Lie subalgebra and $H = \exp \mathfrak{h}$ is the connected abelian subgroup of $G$ corresponding to this subalgebra. 
Moreover, the weight $W = \iota(W_H)$ on the dual of $G$ is extended from a weight $W_H$ on the dual of $H$ as in Section \ref{subsec-weights}.

The main goal in this section is to construct a dense subalgebra $\A$ of $A(G,W)$, whose elements can be evaluated at the points of, at least, an open neighborhood of $G$ in $G_\Comp$.
Our candidate is obtained by replacing the space $L^2(G)$ with the space $\Hi^a_R$ in the description of the Fourier algebra $A(G)$ using the map from \eqref{eq-conv-map}. 
More precisely, for $0<R\le \infty$, we consider the map
$$\Phi_R : L^2(G) \otimes_\pi \Hi^a_R \to C_0(G),\;\; g\otimes f \mapsto g*\check{f}.$$
Since the inclusion map $\Hi^a_R \hookrightarrow L^2(G)$ is continuous, we can easily see that $\Phi_R$ is also continuous.

\begin{defn}
For $0<R\le \infty$ we define the space $\A_R$ by the range of the map $\Phi_R$, that is, $\A_R := {\rm Ran}(\Phi_R)$ equipped with the quotient space structure coming from $(L^2(G) \otimes_\pi \Hi^a_R)/{\rm Ker}\,\Phi_R$ as a locally convex space.
\end{defn}

Let us check a nontrivial fact that $\A_R$ is indeed a topological algebra.

\begin{thm}\label{thm-A_R-algebra}
The space $\A_R$ is a Fr\'echet algebra with respect to point-wise multiplication for any $0<R\le \infty$. 
\end{thm}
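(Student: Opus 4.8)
The statement has two parts: that $\A_R$ is a Fr\'echet \emph{space}, and that it carries a jointly continuous multiplication. The first part is soft: $\Hi^a_R$ is Fr\'echet (as recorded above) and $L^2(G)$ is Banach, so $L^2(G)\otimes_\pi\Hi^a_R$ is Fr\'echet; since $\Phi_R$ is continuous, $\Ker\Phi_R$ is closed, and a quotient of a Fr\'echet space by a closed subspace is again Fr\'echet. Thus $\A_R = (L^2(G)\otimes_\pi\Hi^a_R)/\Ker\Phi_R$ is Fr\'echet, with quotient seminorms $\|\phi\|_{\A_R,s} = \inf\{\sum_i\|g_i\|_2\, E_s(f_i): \phi = \sum_i g_i*\check{f_i}\}$. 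The real content is that pointwise multiplication maps $\A_R\times\A_R$ into $\A_R$ continuously; the plan is to realize the product, on the level of generating tensors, as an explicit Pettis integral of elements of $L^2(G)\otimes_\pi\Hi^a_R$, and then to bound its seminorms.

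First I would produce the multiplication formula. For $\phi = \Phi_R(g_1\otimes f_1)$ and $\psi = \Phi_R(g_2\otimes f_2)$ write $\phi(s)\psi(s) = \int_G\int_G f_1(s^{-1}t)g_1(t)\,f_2(s^{-1}r)g_2(r)\,dt\,dr$ and substitute $r = tw$, using left-invariance of Haar measure. Setting $F_w(x) := f_1(x)f_2(xw)$ and $G_w(u) := g_1(u)g_2(uw)$, this rearranges to
$$\phi(s)\psi(s) = \int_G\la\lambda(s)F_w,\bar{G_w}\ra\,dw = \int_G\Phi_R(G_w\otimes F_w)(s)\,dw,$$
and I would justify the interchange of integrations by the Cauchy--Schwarz bound $\int_G\|F_w\|_2\|G_w\|_2\,dw\le\|f_1\|_2\|f_2\|_2\|g_1\|_2\|g_2\|_2$, where $\int_G\|G_w\|_2^2\,dw = \|g_1\|_2^2\|g_2\|_2^2$ again by left-invariance. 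The key structural point is that $x\mapsto f_2(xw)$ is a right translate of $f_2$, hence commutes with every $d\lambda(X)$ (left and right translations commute), so each $F_w$ is a pointwise product of two analytic vectors and lies in $\Hi^a_R$.

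The crux is the estimate controlling the analytic seminorms of $F_w$ integrated in $w$. Since $\lambda(g)$ acts as an algebra homomorphism on pointwise products, each $d\lambda(X)$ is a derivation, and the non-commutative Leibniz rule gives $d\lambda(X_{j_1}\cdots X_{j_n})(f_1\cdot f_2(\cdot\, w)) = \sum_{S\subseteq\{1,\dots,n\}}(D_S f_1)\cdot (D_{S^c}f_2)(\cdot\, w)$, where $D_S$ is the product of the $d\lambda(X_{j_i})$ with $i\in S$ in increasing order. Using left-invariance of Haar measure one computes $\int_G\|(D_Sf_1)\cdot(D_{S^c}f_2)(\cdot\, w)\|_2^2\,dw = \|D_Sf_1\|_2^2\,\|D_{S^c}f_2\|_2^2$ (the modular function cancels precisely because we integrate over all of $w$), whence, after summing over the multi-index $(j_1,\dots,j_n)$ and applying Minkowski's inequality, $\|\rho_n(F_\cdot)\|_{L^2(dw)}\le\sum_{k=0}^n\binom{n}{k}\rho_k(f_1)\rho_{n-k}(f_2)$. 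Minkowski's inequality once more in $L^2(dw)$ together with the Cauchy product of exponential-type series then yields the clean bound $\|E_s(F_\cdot)\|_{L^2(dw)}\le E_s(f_1)E_s(f_2)$ for every $s<R$, with \emph{no loss of radius}. This last point---that the binomial coefficients produced by Leibniz match the factorials in $E_s$ so that the product remains in $\Hi^a_R$ rather than some $\Hi^a_{R'}$ with $R'<R$---is the main obstacle, and is precisely where the choice of the $L^2$-type seminorms $\rho_n$ pays off.

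Finally I would assemble the estimates. Combining $\|G_\cdot\|_{L^2(dw)} = \|g_1\|_2\|g_2\|_2$ with $\|E_s(F_\cdot)\|_{L^2(dw)}\le E_s(f_1)E_s(f_2)$ and Cauchy--Schwarz shows $\int_G\|G_w\|_2\,E_s(F_w)\,dw\le\|g_1\|_2\|g_2\|_2\, E_s(f_1)E_s(f_2)<\infty$ for each $s<R$; hence the Pettis integral $\Omega := \int_G G_w\otimes F_w\,dw$ exists in $L^2(G)\otimes_\pi\Hi^a_R$, and $\Phi_R(\Omega) = \phi\psi$ by continuity of $\Phi_R$ together with the pointwise identity above. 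The Pettis bound $p(\int f\,d\mu)\le\int p(f)\,d\mu$ then gives $\|\phi\psi\|_{\A_R,s}\le\|g_1\|_2\|g_2\|_2\, E_s(f_1)E_s(f_2)$ on generators. Extending bilinearly to arbitrary $\phi = \sum_i g_i*\check{f_i}$ and $\psi = \sum_j g'_j*\check{f'_j}$, and passing to the infimum over representations, yields the submultiplicative estimate $\|\phi\psi\|_{\A_R,s}\le\|\phi\|_{\A_R,s}\,\|\psi\|_{\A_R,s}$ for every $s<R$, which shows simultaneously that $\A_R$ is closed under multiplication and that the multiplication is jointly continuous. Hence $\A_R$ is a Fr\'echet algebra.
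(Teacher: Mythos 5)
Your proposal is correct and follows essentially the same route as the paper: the product of two generators $g_1*\check f_1$ and $g_2*\check f_2$ is realized as a Pettis integral $\int_G G_w\otimes F_w\,dw$ in $L^2(G)\otimes_\pi\Hi^a_R$, the Leibniz rule together with left-invariance of Haar measure in the translation parameter gives $\bigl(\int_G\rho_n(F_w)^2\,dw\bigr)^{1/2}\le\sum_{k=0}^n\binom{n}{k}\rho_k(f_1)\rho_{n-k}(f_2)$, and the Cauchy product then yields the radius-preserving bound $\int_G E_s(F_w)^2\,dw\le E_s(f_1)^2E_s(f_2)^2$, exactly as in the paper. The only point you gloss over is the weak measurability of $w\mapsto G_w\otimes F_w$ as an $\Hi^a_R$-valued map, which Pettis integrability requires and which the paper verifies separately (restricting to compactly supported $g_i$ and invoking a vector-valued $L^2$ argument); this is a fillable technicality rather than a gap in the method.
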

\begin{proof}
We will follow a standard argument for the fact that a Fourier algebra is a Banach algebra. However, there are additional technicalities involved in dealing with Fr\'echet space structures.

Note that the topology of $\A_R$ is generated by the induced semi-norms
    $$q_s(h) := \inf \{ \sum^k_{i=1}\|g_i\|_2 \cdot E_s(f_i): h = \sum^k_{i=1}g_i*\check{f_i},\, g_i \in L^2(G),\, f_i \in \Hi^a_R \}$$
for $0<s<R$. From the definition of complete projective tensor product of locally convex spaces and the density it is enough to show the following:
for $g_1, g_2 \in C_c(G)$ and $f_1, f_2 \in \Hi^a_R$ we have $\Phi_R (g_1 \otimes f_1) \Phi_R (g_2 \otimes f_2) \in \A_R$ and
\begin{equation*}\label{eq-semi-norm-estimate}
    q_s(\Phi_R(g_1\otimes f_1)\Phi_R(g_2\otimes f_2))\le \|g_1\|_2 \|g_2\|_2\cdot E_{s}(f_1)E_{s}(f_2)
\end{equation*}
for any $0<s<R$. 
Indeed, we can readily check that
    $$(g_1*\check{f}_1)(x)(g_2*\check{f}_2)(x) = \int_G G_y * \check{F}_y(x)\,dy,$$
where $G_y(x) = g_1(xy)g_2(x)$ and $F_y(x) = f_1(xy)f_2(x)$. We will show that the above can be understood as an $\A_R$-valued Pettis integral, i.e. $F_y \in \Hi^a _R$ for a.e. $y \in G$ and
    $$\Phi_R(g_1\otimes f_1)\Phi_R(g_2\otimes f_2) = \int_G G_y * \check{F}_y\,dy.$$
For this goal, it is enough for us to check that the map 
    $$y\mapsto G_y \otimes F_y \in L^2(G) \otimes_\pi \Hi^a_R$$
is well-defined and Pettis integrable, or equivalently, $F_y \in \Hi^a _R$ for a.e. $y \in G$ and the above map is weakly measurable and
    \begin{equation}\label{eq-Pettis-condition}
        \int_G \|G_y\|_2 E_s(F_y)\,dy <\infty\;\; \text{for all}\;\; 0<s<R.
    \end{equation}
Note that we are using \cite[Theorem 3]{Th75}, where the integrability condition is assumed for any continuous semi-norm. However, a quick examination of the proof indicates that the integrability conditions for the semi-norms generating the topology of the underlying Fr\'echet space are enough for the same conclusion.

For weak measurability, we first note that the map
    $$y\mapsto G_y \otimes F_y \in L^2(G\times G)$$
is supported on a compact subset $K\subseteq G$, which allows us to focus on the map $y\in K \mapsto F_y\in L^2(G)$, which is defined almost everywhere and weakly measurable (as a $L^2(G)$-valued function) 
since
    \begin{equation}\label{eq-L2-estimate}
        \int_K \|F_y\|^2_2\,dy = \int_K\int_G|f_1(xy)f_2(y)|^2\,dxdy \le \|f_1\|^2_2\|f_2\|^2_2 < \infty.
    \end{equation}
Let us estimate $\int_G E_s(F_y)^2\,dy \ge \int_K E_s(F_y)^2\,dy$ for a fixed $0<s<R$. Note
    $$d\lambda(X_{j_1}\cdots X_{j_n})F_y = [d\lambda(X_{j_1}\cdots X_{j_n})f_1]^y\cdot f_2 + \cdots + f_1^y \cdot d\lambda(X_{j_1}\cdots X_{j_n})f_2,$$
a sum with $2^n$ terms with mixed Lie derivatives. Here, $h^y$ refers to the right translation with respect to $y$ given by $h^y(x) = h(xy)$, and we used the fact that Lie derivatives commute with right translations.
A computation similar to \eqref{eq-L2-estimate} gives us
\begin{align*}
    \lefteqn{\left(\int_G \sum_{1\le j_1,\cdots,j_n\le d} \int_G |d\lambda(X_{j_1}\cdots X_{j_n})F_y(x)|^2\,dxdy\right)^{\frac{1}{2}}}\\
    & \le \left(\int_G \sum_{1\le j_1,\cdots,j_n\le d} \int_G |[d\lambda(X_{j_1}\cdots X_{j_n})f_1]^y(x)\cdot f_2(x)|^2\,dxdy\right)^{\frac{1}{2}}\\
    & \;\; + \cdots + \left(\int_G \sum_{1\le j_1,\cdots,j_n\le d} \int_G |f_1^y(x) \cdot d\lambda(X_{j_1}\cdots X_{j_n})f_2(x)|^2\,dxdy\right)^{\frac{1}{2}}.
\end{align*}
The first term becomes
\begin{align*}
    \lefteqn{\int_G \sum_{1\le j_1,\cdots,j_n\le d} \int_G |[d\lambda(X_{j_1}\cdots X_{j_n})f_1]^y(x)\cdot f_2(x)|^2\,dxdy}\\
    & = \int_G \sum_{1\le j_1,\cdots,j_n\le d} \int_G |d\lambda(X_{j_1}\cdots X_{j_n})f_1(xy)\cdot f_2(x)|^2\,dxdy\\
    & = \int_G \sum_{1\le j_1,\cdots,j_n\le d} \int_G |d\lambda(X_{j_1}\cdots X_{j_n})f_1(xy)|^2\,dy \cdot |f_2(x)|^2\,dx\\
    & = \sum_{1\le j_1,\cdots,j_n\le d} \int_G |d\lambda(X_{j_1}\cdots X_{j_n})f_1(y)|^2\,dy \cdot \int_G|f_2(x)|^2\,dx\\
    & = \rho_n(f_1)^2\rho_0(f_2)^2.
\end{align*}
The rest of the terms have similar expressions so that we have
    $$\left(\int_G\rho _n(F_y)^2\,dy\right)^{\frac{1}{2}} \le \sum^n_{k=0}\binom{n}{k}\rho_{n-k}(f_1)\rho_k(f_2).
$$
Then, by Cauchy-Schwarz and the above we have
\begin{align}\label{eq-E_s-estimate}
    \lefteqn{\int_G E_s(F_y)^2\,dy}\\
    & = \sum_{m,n\ge 0}\int_G\rho _m(F_y)\rho _n(F_y)\,dy\frac{s^{m+n}}{m!n!}\nonumber\\
    &\le \sum_{m,n\ge 0} \sum^n_{k=0}\sum^m_{l=0}\binom{n}{k}\binom{m}{l}\rho _{n-k}(f_1)\rho _k(f_2)\rho _{m-l}(f_1)\rho _l(f_2)\frac{s^{m+n}}{m!n!}\nonumber\\
    & =\sum_{k,l\ge 0}  \frac{s^k\rho _k(f_1)\rho _l(f_1)s^l}{k!l!}  \left(\sum_{n\ge k} \sum_{m\ge l} \frac{s^{n-k}\rho _{n-k}(f_2)\rho _{m-l}(f_2)s^{m-l}}{(n-k)!(m-l)!}\right)\nonumber\\
    & =\sum_{k,l\ge 0} \frac{s^k\rho _k(f_1)\rho _l(f_1)s^l}{k!l!}  \left(\sum_{n',m'\ge 0}  \frac{s^{n'}\rho _{n'}(f_2)\rho _{m'}(f_2)s^{m'}}{(n')!(m')!}\right)\nonumber\\
    & = E_s(f_1)^2E_s(f_2)^2 <\infty. \nonumber
\end{align}

Now we know that $\int_K E_s(F_y)^2\,dy <\infty$ so that the function $y\in K \mapsto F_y$ belongs to the vector-valued $L^2$-space $L^2(K,X_s)$, where $X_s$ is the Banach space given by
    $$X_s :=\{f\in \Hi^\infty: E_s(f)<\infty\}.$$
Thus, the map $y\in K \mapsto F_y$ is an almost everywhere defined $X_s$-valued function, and consequently an almost everywhere defined $\Hi^a_R$-valued function by considering countably many $0<s<R$ approaching $R$.
Since we already know that the map $y\in K \mapsto (F_y,H)$ is measurable for any $H\in L^2(G)$ and $L^2(G)$ is clearly a total set in $(\Hi^a _R)^*$ we may appeal to \cite[Theorem 1]{Th75} for the weak measurability of the map $y\in K \mapsto F_y\in \Hi^a _R$ and, consequently, of the map $y\in K \mapsto G_y \otimes F_y \in L^2(G) \otimes_\pi \Hi^a_R$.

The condition \eqref{eq-Pettis-condition} comes from
    $$(\int_G \|G_y\|_2 E_s(F_y)\,dy)^2 \le \int_G \|G_y\|^2_2\,dy \cdot \int_G E_s(F_y)^2\,dy$$
and $$\int_G \|G_y\|^2_2\,dy = \int_G\int_G|g_1(xy)g_2(y)|^2\,dxdy \le \|g_1\|^2_2\|g_2\|^2_2
$$
together with \eqref{eq-E_s-estimate}.
\end{proof}


The remainder of this section is devoted to showing the density of the analytic subalgebra $\A_R$ in $A(G,W)$ for certain parameters $R>0$.
The conclusion for a general weight $W$ can be obtained from the same result for the specific weight
    $$W_\beta =  e^{\beta(|\partial \lambda(X_1)| + \cdots + |\partial \lambda(X_k)|)}$$
from \eqref{eq-exp-weight-beta}, whose precise description allows us to perform a thorough investigation. Let us begin with some preparations.

\begin{prop}\label{prop-analytic}
For $r>0$ and $\displaystyle 0< \beta < \frac{r}{\sqrt{k}}$ we have $\Hi^a_r\subseteq \D(W_\beta)$ with
\begin{equation}\label{eq-W-E-norm-compare}
    \|W_\beta v\|\le E_r(v),\;\; v\in \Hi^a_r.
\end{equation}
Moreover, for $v\in \Hi^a_r$ and $w\in \Hi$ the function
    $$t_j\mapsto  \la e^{t_1\partial \lambda(X_1) + \cdots + t_k\partial \lambda(X_k)}W_\beta v, w\ra$$
is real-analytic on $\Real$ for all $1\le j\le k$.
\end{prop}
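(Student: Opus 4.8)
The plan is to reduce both assertions to the joint functional calculus of the strongly commuting family of self-adjoint operators $P_m := i\,\partial\lambda(X_m)$, $1\le m\le k$, which commute because $\mathfrak{h}=\Span\{X_1,\dots,X_k\}$ is abelian; then $|\partial\lambda(X_m)|=|P_m|$ and $W_\beta = e^{\beta(|P_1|+\cdots+|P_k|)}$. I would write $S := |P_1|+\cdots+|P_k|$ and introduce the radial operator $R := (P_1^2+\cdots+P_k^2)^{1/2}$, both defined through the joint spectral measure $E$ of $(P_1,\dots,P_k)$ on $\Real^k$.

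For the norm estimate the key point is the spectral identity $\|R^n v\|\le \rho_n(v)$. I would expand $R^{2n}=(\sum_m P_m^2)^n = \sum_{j_1,\dots,j_n\le k}P_{j_1}^2\cdots P_{j_n}^2$ using commutativity to get $\|R^n v\|^2 = \sum_{j_1,\dots,j_n\le k}\|d\lambda(X_{j_1}\cdots X_{j_n})v\|^2$, which is dominated by the full sum over indices $\le d$ defining $\rho_n(v)^2$. Since $\sum_m|\sigma_m|\le \sqrt k\,(\sum_m\sigma_m^2)^{1/2}$ on the joint spectrum, one has $e^{\beta S}\le e^{\beta\sqrt k\,R}$ pointwise, hence $\|W_\beta v\|\le \|e^{\beta\sqrt k R}v\|$. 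Applying Lemma \ref{lem-abs-conv} to the positive operator $\beta\sqrt k\,R$ (its hypotheses hold because $\sum_n \frac{(\beta\sqrt k)^n}{n!}\|R^nv\|\le \sum_n\frac{(\beta\sqrt k)^n}{n!}\rho_n(v) = E_{\beta\sqrt k}(v)$) yields $v\in\D(e^{\beta\sqrt k R})\subseteq\D(W_\beta)$ together with $\|W_\beta v\|\le E_{\beta\sqrt k}(v)\le E_r(v)$, the last step using $\beta\sqrt k < r$. For the domain statement, choosing $\beta\sqrt k<s<r$ gives $E_{\beta\sqrt k}(v)\le E_s(v)<\infty$.

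For the real-analyticity, I would fix $j$ and the remaining variables. As the $\partial\lambda(X_m)$ commute and each $e^{t_m\partial\lambda(X_m)} = e^{-it_mP_m}$ is unitary, I can factor and set $u := \big(\prod_{l\ne j}e^{-it_lP_l}\big)W_\beta v$, reducing the function to $f(t_j)=\la e^{-it_jP_j}u, w\ra = \int_\Real e^{-it_j\tau}\,dE^{P_j}_{u,w}(\tau)$, the Fourier transform of the complex spectral measure $E^{P_j}_{u,w}$. The crucial estimate is an exponential moment bound: since the unitaries commute with $|P_j|$, $\|e^{c|P_j|}u\| = \|e^{c|P_j|}W_\beta v\|\le \|W_{\beta+c}v\|$, which is finite by the first part as soon as $(\beta+c)\sqrt k<r$; because $\beta<r/\sqrt k$ this holds for some $c>0$. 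A Cauchy--Schwarz estimate for spectral measures then gives $\int_\Real e^{c'|\tau|}\,d|E^{P_j}_{u,w}|(\tau)\le \|e^{c'|P_j|}u\|\,\|w\|<\infty$ for $0<c'<c$, so $z\mapsto \int_\Real e^{-iz\tau}\,dE^{P_j}_{u,w}(\tau)$ is holomorphic on the strip $|\Im z|<c$ (by Morera's theorem together with Fubini) and restricts to $f$ on $\Real$. Hence $f$ is real-analytic.

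The main obstacle is the non-polynomial nature of the absolute values $|P_m|$ appearing in $W_\beta$: their odd powers obstruct a direct term-by-term comparison with the derivative seminorms $\rho_n$. Passing to the radial operator $R$ and using the two complementary spectral inequalities $\sum_m|\sigma_m|\le\sqrt k\,|\sigma|$ (which produces the sharp constant $\sqrt k$ rather than $k$) and $\|R^nv\|\le\rho_n(v)$ is what resolves this. For the analyticity part, the subtle point is that the exponential weight $e^{c|\tau|}$ must be absorbed entirely into the vector $u$ --- which carries the decay coming from the factor $e^{\beta|P_j|}$ inside $W_\beta$ --- rather than split between $u$ and the arbitrary vector $w$.
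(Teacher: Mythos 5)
Your argument is correct, and it splits naturally into a half that mirrors the paper and a half that does not. For the inclusion $\Hi^a_r\subseteq\D(W_\beta)$ and the bound \eqref{eq-W-E-norm-compare}, the paper expands $\bigl[\beta(|\partial\lambda(X_1)|+\cdots+|\partial\lambda(X_k)|)\bigr]^n$ directly into $k^n$ monomials, uses $\||\partial\lambda(X_{j_1}\cdots X_{j_n})|v\|=\|\partial\lambda(X_{j_1}\cdots X_{j_n})v\|$ and Cauchy--Schwarz over the indices to land on $\beta^nk^{n/2}\rho_n(v)$, and then invokes Lemma \ref{lem-abs-conv} for $\beta(|\partial\lambda(X_1)|+\cdots+|\partial\lambda(X_k)|)$ itself; your detour through the radial operator $R$ and the pointwise spectral inequality $\sum_m|\sigma_m|\le\sqrt{k}\,|\sigma|$ reaches the identical bound $E_{\beta\sqrt{k}}(v)$ with the same constant, so this is essentially the paper's computation in different packaging. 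For the real-analyticity, the routes genuinely diverge: the paper again applies Lemma \ref{lem-abs-conv}, this time to the vector $W_\beta v$ and the operator $t_1\partial\lambda(X_1)$, obtaining a power series valid only on $(-r+\sqrt{k}\beta,\,r-\sqrt{k}\beta)$ and then propagating analyticity to all of $\Real$ by translating with unitaries $e^{s\partial\lambda(X_1)}$; you instead absorb the exponential decay into the vector $u$ via $\|e^{c|P_j|}W_\beta v\|\le\|W_{\beta+c}v\|$, read off exponential moments of the complex spectral measure $E^{P_j}_{u,w}$, and conclude that $f$ extends holomorphically to a horizontal strip. Your version yields analyticity on all of $\Real$ in one step (and in fact a strip of uniform width), at the cost of an extra functional-calculus domain argument for the product $e^{c|P_j|}W_\beta$; the paper's version stays entirely within the series estimates already set up in terms of $E_s(v)$, which is the currency used elsewhere in Section \ref{sec-anal-subalg-density}. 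Both are sound.
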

\begin{proof}
Let $v \in \Hi^a _r$. Let us first check that $v\in \D(W_\beta)$.
\begin{align*}
    \lefteqn{\|[\beta(|\partial \lambda(X_1)| + \cdots + |\partial \lambda(X_k)|)]^nv\|} \\
    & \le \beta^n \sum_{1\le j_1, \cdots, j_n \le k} \||\partial \lambda(X_{j_1}\cdots X_{j_n})|v\|\\
    & = \beta^n \sum_{1\le j_1, \cdots, j_n \le k} \|\partial \lambda(X_{j_1}\cdots X_{j_n})v\|
    \le \beta^n k^{\frac{n}{2}} \rho_n(v) < \infty.
    \end{align*}
Now we appeal to Lemma~\ref{lem-abs-conv} to see that $v\in \D(W_\beta)$ and
\begin{equation}\label{eq-W-beta-series}
    W_\beta v = \sum_{n\ge 0} \frac{[\beta(|\partial \lambda(X_1)| + \cdots + |\partial \lambda(X_k)|)]^nv}{n!}.
\end{equation}
The estimate \eqref{eq-W-E-norm-compare} is now immediate.

For the second statement, we note that we may focus on the case $j=1$ by symmetry. For the unitary
    $$U = e^{t_2\partial \lambda(X_2) + \cdots + t_k\partial \lambda(X_k)}$$
we have $U e^{t_1\partial \lambda(X_1)} = e^{t_1\partial \lambda(X_1) + \cdots + t_k\partial \lambda(X_k)}$ so that
\begin{align*}
    \la e^{t_1\partial \lambda(X_1) + \cdots + t_k\partial \lambda(X_k)}W_\beta v, w\ra
    & = \la U e^{t_1\partial \lambda(X_1)}W_\beta v, w\ra\\
    & = \la e^{t_1\partial \lambda(X_1)}W_\beta v, U^*w\ra.
\end{align*}
Let $|t_1| < r - \sqrt{k} \beta$. By \eqref{eq-W-beta-series} and the Cauchy-Schwartz inequality, we have
\begin{align*}
    \lefteqn{\| \big|t_1\partial \lambda(X_1)\big|^m W_\beta v\| }\\
    & \le |t_1|^m\sum_{n\ge 0} \frac{  \| [\beta(|\partial \lambda(X_1)| + \cdots + |\partial \lambda(X_k)|)]^n \cdot \big|\partial \lambda(X_1)\big|^m v\|}{n!} \\
    &\le \sum_{n\ge 0}\frac{|t_1|^m \beta^n k^{\frac{n}{2}} \rho_{m+n}(v)}{n!}.
\end{align*}
Thus, we have
    $$\sum_{m\ge 0}\frac{\| \big|t_1\partial \lambda(X_1)\big|^m W_\beta v\|}{m!}\le \sum_{l\ge 0}\frac{(|t_1|+\sqrt{k}\beta)^l}{l!}\rho_l(v) < \infty.$$
So, Lemma \ref{lem-abs-conv} tells us that the function
    $$f(t_1) = \la e^{t_1\partial \lambda(X_1)}W_\beta v, U^* w\ra$$
is a power series of $t_1$ convergent on $(-r+\sqrt{k}\beta, r-\sqrt{k}\beta)$.
Analyticity on $\Real$ can be easily obtained by translation in $t_1$ variable, which is the same as taking an appropriate unitary $e^{s\partial\lambda(X_1)}$ ($s\in \Real$) out from the exponential operator.
\end{proof}

\begin{thm}\label{thm-density-domain}
Let $0<R\le \infty$ be such that $\Hi^a_R$ is dense in $L^2(G)$. For any $0<r<R$ and $0<\beta < \frac{r}{\sqrt{k}}$ the space $W_\beta(\Hi^a_r)$ is dense in $L^2(G)$. When $R=\infty$, the same conclusion holds for $r=\infty$.
\end{thm}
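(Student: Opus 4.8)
The plan is to prove the statement by showing that the orthogonal complement of $W_\beta(\Hi^a_r)$ in $L^2(G)$ is trivial. So I would fix $w\in L^2(G)$ with $\la W_\beta v, w\ra = 0$ for every $v\in\Hi^a_r$ (this pairing makes sense since $\Hi^a_r\subseteq\D(W_\beta)$ by Proposition~\ref{prop-analytic}) and aim to deduce $w=0$. The density of $\Hi^a_r$, which follows from the hypothesis that $\Hi^a_R$ is dense together with the inclusion $\Hi^a_R\subseteq\Hi^a_r$ (valid since $r<R$), will be used only at the very end.

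The main device is the joint spectral measure. Since $X_1,\dots,X_k\in\h$ and $\h$ is abelian, the self-adjoint operators $P_j := i\partial\lambda(X_j)$, $1\le j\le k$, strongly commute and admit a joint spectral measure $E$ on $\Real^k$; in the resulting multivariable functional calculus $W_\beta$ acts as multiplication by $e^{\beta(|s_1|+\cdots+|s_k|)}$, while $U_t := e^{t_1\partial\lambda(X_1)+\cdots+t_k\partial\lambda(X_k)}$ acts as multiplication by $e^{-i\,t\cdot s}$ for $t\in\Real^k$, where $t\cdot s = \sum_j t_j s_j$. For $v\in\Hi^a_r$ I would introduce the complex measure $d\mu_{v,w}(s) := e^{\beta(|s_1|+\cdots+|s_k|)}\,dE_{v,w}(s)$, which is finite: by Cauchy--Schwarz for the spectral measure its total variation is bounded by $\|W_\beta v\|\,\|w\|$. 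Its Euclidean Fourier transform is then exactly $\widehat{\mu}_{v,w}(t) = \la U_t W_\beta v, w\ra$.

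The heart of the argument is to upgrade the hypothesis $\widehat{\mu}_{v,w}(0) = \la W_\beta v, w\ra = 0$ to $\widehat{\mu}_{v,w}\equiv 0$ on $\Real^k$ for every $v\in\Hi^a_r$. Two facts drive this. First, because $\partial\lambda(X_j)$ commutes with $U_t$ and with $W_\beta$, Stone's theorem yields the differentiation rule $\partial_{t_j}\widehat{\mu}_{v,w} = \widehat{\mu}_{d\lambda(X_j)v,\,w}$, and $d\lambda(X_j)v$ again lies in $\Hi^a_r$ by Proposition~\ref{prop-analytic evaluation-domain}(2); thus the family $\{\widehat{\mu}_{v,w}\}_{v\in\Hi^a_r}$ is stable under each $\partial_{t_j}$ while every member vanishes at the origin, so \emph{all} partial derivatives of $\widehat{\mu}_{v,w}$ at $0$ vanish. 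Second, Proposition~\ref{prop-analytic} guarantees that $\widehat{\mu}_{v,w}$ is real-analytic in each variable $t_j$ on all of $\Real$. Combining these, I would induct on the number of ``active'' variables: writing $P(m)$ for the assertion that $\widehat{\mu}_{v,w}(t_1,\dots,t_m,0,\dots,0)=0$ for all $v\in\Hi^a_r$ and all $(t_1,\dots,t_m)$, the step $P(m-1)\Rightarrow P(m)$ follows by freezing $t_1,\dots,t_{m-1}$, noting that the $t_m$-Taylor coefficients at $t_m=0$ equal $\widehat{\mu}_{d\lambda(X_m)^n v,\,w}(t_1,\dots,t_{m-1},0,\dots,0)$, which vanish by $P(m-1)$, and invoking the one-variable identity theorem for real-analytic functions. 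This yields $P(k)$, that is, $\widehat{\mu}_{v,w}\equiv 0$.

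Finally, by injectivity of the Fourier transform on finite measures, $\mu_{v,w}=0$; since its density $e^{\beta(|s_1|+\cdots+|s_k|)}$ relative to $E_{v,w}$ is strictly positive, this forces $E_{v,w}=0$. Evaluating on $A=\Real^k$ gives $\la v, w\ra = E_{v,w}(\Real^k) = 0$ for every $v\in\Hi^a_r$, and density of $\Hi^a_r$ then gives $w=0$. The case $R=r=\infty$ is handled verbatim with $\Hi^a_\infty$ in place of $\Hi^a_r$, the constraint $\beta<r/\sqrt{k}$ becoming vacuous. The main obstacle I anticipate is precisely this upgrade step: turning the single scalar identity $\la W_\beta v,w\ra=0$ into identical vanishing of the whole Fourier transform, which is what unlocks Fourier injectivity; carrying this out with only \emph{separate} (rather than joint) real-analyticity, and verifying the finiteness of $\mu_{v,w}$ so that injectivity applies, are the delicate points.
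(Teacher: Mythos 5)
Your proof is correct, and while it shares the paper's central mechanism, it differs substantively at both ends. The common core is the function $F_v(t)=\la e^{t_1\partial\lambda(X_1)+\cdots+t_k\partial\lambda(X_k)}W_\beta v,w\ra$ together with its separate real-analyticity (Proposition \ref{prop-analytic}) and a variable-by-variable application of the one-variable identity theorem. Where you diverge: the paper seeds the identity theorem with vanishing of $F_v$ on an open neighborhood of $0$, obtained by working with $v\in\Hi^a_R$ and using Proposition \ref{prop-holomorphic-ext} to show that small real translates $e^{t\cdot\partial\lambda}v$ stay in $\Hi^a_r\subseteq\D(W_\beta)$; you instead seed it with vanishing of all Taylor coefficients at $0$, obtained from the invariance of $\Hi^a_r$ under $d\lambda(X_j)$ (Proposition \ref{prop-analytic evaluation-domain}(2)) and the differentiation rule $\partial_{t_j}F_v=F_{d\lambda(X_j)v}$, which lets you work with $v\in\Hi^a_r$ throughout and never invoke Proposition \ref{prop-holomorphic-ext}. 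At the conclusion, the paper mollifies $w$ by Gaussians, needs Lemma \ref{lem-integral-domain} to place the mollified vectors $\xi_\gamma$ in $\D(W_\beta)$, and lets $\gamma\to\infty$; you instead read $F_v$ as the Fourier--Stieltjes transform of the finite measure $e^{\beta(|s_1|+\cdots+|s_k|)}\,dE_{v,w}$ and invoke injectivity of that transform to get $E_{v,w}=0$, hence $\la v,w\ra=0$ directly. Your route buys a cleaner finish (no Gaussian regularization, no domain lemma for the mollified vectors), at the price of two domain verifications you should make explicit: the finiteness of $\int e^{\beta\sum_j|s_j|}\,d|E_{v,w}|$ (which you do note, via $\|W_\beta v\|\,\|w\|$) and, for the differentiation rule and its iterates, that $d\lambda(X_m)^{n}v\in\D\bigl(\partial\lambda(X_j)W_\beta\bigr)$; both follow because the inequality $\beta<r/\sqrt{k}$ is strict, so the polynomial factors $|s_j|^n$ are absorbed into $e^{(\beta'-\beta)\sum_j|s_j|}$ for some $\beta<\beta'<r/\sqrt{k}$ with $\Hi^a_r\subseteq\D(W_{\beta'})$. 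With those two lines added, the argument is complete.
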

\begin{proof}
We only consider the case $R<\infty$ since the case $R=\infty$ can be done similarly.
We take $w\in [W_\beta(\Hi^a_r)]^\perp$ and will show that $w=0$. Using Proposition \ref{prop-holomorphic-ext} we may choose $\eps>0$ so that
    $$e^{t_1\partial \lambda(X_1) + \cdots + t_k\partial \lambda(X_k)}(\Hi^a_R) \subseteq \Hi^a_r$$
for $(t_j)^k_{j=1} \subseteq \Real$ with $\sum^k_{j=1}|t_j|^2 < \eps^2$. Thus, for any $v\in \Hi^a_R$ and $(t_j)^k_{j=1} \subseteq \Real$ with $\sum^k_{j=1}|t_j|^2 < \eps^2$ we have
    $$\la e^{t_1\partial \lambda(X_1) + \cdots + t_k\partial \lambda(X_k)}W_\beta v, w \ra = \la W_\beta e^{t_1\partial \lambda(X_1) + \cdots + t_k\partial \lambda(X_k)}v, w\ra = 0.$$
Note that the operators $e^{t_1\partial \lambda(X_1) + \cdots + t_k\partial \lambda(X_k)}$ and $W_\beta$ come from the functional calculus of the strongly commuting operators $\{ \partial \lambda(X_1) , \cdots , \partial \lambda(X_k) \}$. Thus, the fact $ v \in \D(W_\beta) \cap \D (W_\beta e^{t_1\partial \lambda(X_1) + \cdots + t_k\partial \lambda(X_k)})$ implies the first equality above.

Now we use Proposition \ref{prop-analytic} and the uniqueness of real-analytic functions for $j=1, \cdots, k$ recursively to conclude that
    $$\la W_\beta v, e^{-(t_1\partial \lambda(X_1) + \cdots + t_k\partial \lambda(X_k))}w \ra = \la e^{ t_1\partial \lambda(X_1) + \cdots + t_k\partial \lambda(X_k)}W_\beta v, w \ra = 0$$
for any $t_1, \cdots, t_k \in \Real$. Taking integral over $t_1, \cdots, t_k$ with respect to gaussian measures we get $\la W_\beta v, \xi_\gamma \ra = 0$, where 
    $$\displaystyle \xi_\gamma = \left(\frac{\gamma}{\pi}\right)^{\frac{k}{2}} \int_{\Real^k} e^{-\gamma (t^2_1 + \cdots + t^2_k)} e^{t_1\partial \lambda(X_1) + \cdots + t_k\partial \lambda(X_k)}w\, dt_1\cdots\, dt_k,\;\;\gamma > 0.$$
Lemma \ref{lem-integral-domain} below tells us that $\xi_\gamma \in \D(W_\beta)$ so that $\la v, W_\beta\xi_\gamma \ra = 0$ for any $v\in \Hi^a_R$ and $\gamma>0$. The density of $\Hi^a_R$ gives us the conclusion $W_\beta\xi_\gamma = 0$ and consequently $\xi_\gamma = 0$ since $W_\beta$ is invertible. Finally, since $\gamma > 0$ was arbitrary, a simple application of LDCT shows
    $$w = \lim_{\gamma \to \infty}\xi_\gamma = 0.$$
\end{proof}

\begin{lem}\label{lem-integral-domain}
For any $\beta, \gamma > 0$ and $w\in L^2(G)$ the vector
    $$\xi = \int_{\Real^k} e^{-\gamma\sum^k_{j=1}t^2_j} e^{\sum^k_{j=1}t_j\partial \lambda(X_j)}w\, dt_1\cdots\, dt_k$$
belongs to the domain $\D(W_\beta)$.
\end{lem}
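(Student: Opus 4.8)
The plan is to exploit the joint functional calculus of the strongly commuting family $\{\partial \lambda(X_1), \dots, \partial \lambda(X_k)\}$ and to recognize the Gaussian-weighted integral of the unitary group as a Gaussian multiplier in the spectral variables, whose rapid decay overwhelms the exponential growth built into $W_\beta$. Concretely, writing $A_j := i\partial \lambda(X_j)$ for the self-adjoint generators, there is a joint spectral measure $E : \B(\Real^k) \to B(\Hi)$ with $A_j = \int_{\Real^k} \mu_j \, dE(\mu)$, so that $e^{\sum_j t_j \partial \lambda(X_j)} = \int_{\Real^k} e^{-i\la t, \mu\ra}\, dE(\mu)$ is unitary for real $t$ and $W_\beta = \int_{\Real^k} e^{\beta \sum_j |\mu_j|}\, dE(\mu)$. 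The membership $\xi \in \D(W_\beta)$ is then equivalent to the single estimate $\int_{\Real^k} e^{2\beta \sum_j |\mu_j|}\, dE_\xi(\mu) < \infty$, where $E_\xi = \la E(\cdot)\xi, \xi\ra$.

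The key step is to identify $\xi$ explicitly via functional calculus. First I would note that the vector defining $\xi$ exists as a Bochner integral, since $t \mapsto e^{\sum_j t_j \partial \lambda(X_j)} w$ is strongly continuous and $\int_{\Real^k} e^{-\gamma |t|^2} \|e^{\sum_j t_j \partial \lambda(X_j)} w\|\, dt = \|w\|\, (\pi/\gamma)^{k/2} < \infty$. Then, testing against an arbitrary $\eta \in \Hi$ and expanding $\la e^{\sum_j t_j \partial \lambda(X_j)} w, \eta\ra = \int_{\Real^k} e^{-i\la t, \mu\ra}\, dE_{w,\eta}(\mu)$, I would apply Fubini's theorem to the finite complex measure $E_{w,\eta}$ and the Gaussian measure in $t$ (the joint absolute integrability being immediate from $|E_{w,\eta}|(\Real^k) \le \|w\|\|\eta\|$) to swap the order of integration. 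The inner $t$-integral is the Fourier transform of a Gaussian, $g(\mu) := \int_{\Real^k} e^{-\gamma |t|^2} e^{-i\la t, \mu\ra}\, dt = (\pi/\gamma)^{k/2} e^{-|\mu|^2/(4\gamma)}$, which yields $\la \xi, \eta\ra = \int_{\Real^k} g(\mu)\, dE_{w,\eta}(\mu) = \la g(A_1, \dots, A_k) w, \eta\ra$ for every $\eta$. Since $g$ is bounded, $g(A_1, \dots, A_k)$ is everywhere defined and bounded, so this identifies $\xi = g(A_1, \dots, A_k) w$.

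With this identification in hand the remainder is routine. Applying the standard rule $dE_{f(A)w} = |f|^2\, dE_w$ for a bounded Borel function $f$, I obtain $dE_\xi(\mu) = |g(\mu)|^2\, dE_w(\mu)$, whence $\int_{\Real^k} e^{2\beta \sum_j |\mu_j|}\, dE_\xi = (\pi/\gamma)^k \int_{\Real^k} e^{2\beta \sum_j |\mu_j|} e^{-|\mu|^2/(2\gamma)}\, dE_w(\mu)$. The integrand $\mu \mapsto e^{2\beta \sum_j |\mu_j|} e^{-|\mu|^2/(2\gamma)}$ is bounded on $\Real^k$ because the Gaussian decay dominates the exponential growth; denoting its supremum by $C < \infty$, the integral is at most $(\pi/\gamma)^k C\, E_w(\Real^k) = (\pi/\gamma)^k C\, \|w\|^2 < \infty$, which gives $\xi \in \D(W_\beta)$ as desired.

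I expect the main obstacle to be rigorously justifying the interchange in the second paragraph, namely that the Bochner integral over $t$ may be pulled inside the spectral integral to produce the multiplier $g$. The clean route is precisely the scalar Fubini argument against test vectors $\eta$ outlined above, using that each complex measure $E_{w,\eta}$ has total variation bounded by $\|w\|\|\eta\|$; everything else (existence of the Bochner integral, the explicit Gaussian transform, and the final boundedness estimate) is standard.
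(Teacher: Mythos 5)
Your proof is correct, but it takes a genuinely different route from the paper's. The paper fixes a choice of signs $(\eps_j)\subseteq\{\pm1\}$, observes via a change of variables that $\xi$ is unchanged, writes $\xi(t)=H^{it}\xi$ for $H=e^{-i\sum_j\eps_j\partial\lambda(X_j)}$, checks that $t\mapsto\xi(t)$ extends to an entire function uniformly bounded on the strip $\{|\mathrm{Im}\,z|\le\beta\}$, invokes Takesaki's analytic-domain lemma to conclude $\xi\in\D(H^\beta)$, and finally sums over all $2^k$ sign choices using $e^{2\beta\sum_j|t_j|}\le\sum_{\eps}e^{2\beta\sum_j\eps_jt_j}$ to land in $\D(W_\beta)$. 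You instead identify $\xi$ explicitly as $g(A_1,\dots,A_k)w$ with $g(\mu)=(\pi/\gamma)^{k/2}e^{-|\mu|^2/(4\gamma)}$ via a scalar Fubini argument against the complex measures $E_{w,\eta}$, after which membership in $\D(W_\beta)$ reduces to the trivial observation that $e^{2\beta\sum_j|\mu_j|}e^{-|\mu|^2/(2\gamma)}$ is bounded. All the steps check out: the $A_j$ do strongly commute (the paper uses this same fact in the proof of Theorem 3.5), the Fubini interchange is justified by $|E_{w,\eta}|(\Real^k)\le\|w\|\,\|\eta\|$, and the domain criterion $\int e^{2\beta\sum_j|\mu_j|}\,dE_\xi<\infty$ is exactly the paper's definition of $\D(f(T))$. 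Your argument is more elementary (no appeal to the three-lines-type lemma from Takesaki) and more explicit; it also yields strictly more, namely that $\xi$ lies in the domain of $e^{c\sum_j|A_j|}$ for every $c>0$, and indeed of any function of the generators dominated by a Gaussian. The paper's approach, by contrast, is tailored to the strip of width $\beta$ and generalizes more naturally to situations where the smoothing kernel is not an explicit Gaussian.
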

\begin{proof}
For any choice of signs $(\eps_j)^k_{j=1} \subseteq \{\pm 1\}$ we have
    $$\xi = \int_{\Real^k} e^{-\gamma\sum^k_{j=1}t^2_j} e^{\sum^k_{j=1}t_j\eps_j\partial \lambda(X_j)}w\, dt_1\cdots\, dt_k$$
by a simple change of variables. For $t\in \Real$ we consider
    $$\xi(t) = \int_{\Real^k} e^{-\gamma \sum^k_{j=1}(t_j-t)^2} e^{\sum^k_{j=1}t_j\eps_j\partial \lambda(X_j)}w\, dt_1\cdots\, dt_k = H^{it}\xi,$$
where $H = e^{-i\sum^k_{j=1}\eps_j\partial \lambda(X_j)}$, a non-singular self-adjoint positive operator. It is straightforward to see that $\xi(t)$ extends to an entire function and the extension is uniformly bounded on the strip $\{z\in \Comp: |{\rm Im}z|\le \beta\}$.
Now we can appeal to \cite[VI. Lemma 2.3]{Takesaki2} to get $\xi \in \D(H^\beta) = \D(e^{-i\beta\sum^k_{j=1}\eps_j\partial \lambda(X_j)})$.
Since $e^{2\beta\sum^k_{j=1}|t_j|} \le \sum e^{2\beta\sum^k_{j=1}\eps_j t_j}$, $(t_1,\cdots, t_k)\in \Real^k$, where the sum on the RHS is over all the choices of signs $(\eps_j)^k_{j=1} \subseteq \{\pm 1\}$, we get the desired conclusion by functional calculus.
\end{proof}

Now we have the desired density for the weight $W_\beta$.

\begin{cor}\label{cor-density-algebra}
Suppose $0<R\le \infty$ is such that $\Hi^a_R$ is dense in $\Hi$ and take any $0< r < R$.
Then, the algebra $\A_r$ continuously embeds in $A(G, W_\beta)$ with a dense image for any $\displaystyle 0 \leq \beta < \frac{r}{\sqrt{k}}$.
When $R=\infty$, the same conclusion holds for $r=\infty$.
\end{cor}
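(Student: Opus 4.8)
The plan is to establish the two halves of the statement separately: continuity of the inclusion $\A_r \hookrightarrow A(G,W_\beta)$ and density of its image. The common engine is Remark~\ref{rem-weighted-Fourier-element}, which expresses $W_\beta^{-1}$ on an elementary coefficient function as $g*\check f = W_\beta^{-1}\big(g*(W_\beta f)^\lor\big)$ for $f\in\D(W_\beta)$, together with the bound $\|g*(W_\beta f)^\lor\|_{A(G)}\le\|g\|_2\|W_\beta f\|_2$. Since $\beta<\frac{r}{\sqrt k}$, I first fix a parameter $s$ with $\sqrt k\,\beta<s<r$. For such $s$, Proposition~\ref{prop-analytic} (applied with $s$ in place of $r$) gives $\Hi^a_r\subseteq\Hi^a_s\subseteq\D(W_\beta)$ and the key comparison $\|W_\beta f\|_2\le E_s(f)$ for all $f\in\Hi^a_r$, while $q_s$ is one of the defining seminorms of $\A_r$. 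This choice of $s$ is the only place where the hypothesis $\beta<\frac{r}{\sqrt k}$ enters the continuity argument.

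For continuity, I would estimate on a representation $h=\sum_i g_i*\check f_i$ of an element $h\in\A_r$. Combining the two facts above,
\[
\|h\|_{A(G,W_\beta)}=\Big\|\sum_i g_i*(W_\beta f_i)^\lor\Big\|_{A(G)}\le\sum_i\|g_i\|_2\|W_\beta f_i\|_2\le\sum_i\|g_i\|_2 E_s(f_i),
\]
where convergence of $\sum_i g_i*(W_\beta f_i)^\lor$ in $A(G)$ is guaranteed by finiteness of the right-hand side and completeness of $A(G)$, and where one uses that $W_\beta^{-1}\colon A(G)\to A(G,W_\beta)$ is isometric. Establishing this on the finite-sum elements and extending by density and completeness of $A(G,W_\beta)$, then taking the infimum over representations of $h$, yields $\|h\|_{A(G,W_\beta)}\le q_s(h)$, which is exactly continuity of the inclusion (with a single seminorm and constant $1$).

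For density, I would use that $W_\beta^{-1}\colon A(G)\to A(G,W_\beta)$ is an isometric isomorphism and that the coefficient functions $g*\check f$ ($g,f\in L^2(G)$) span a dense subspace of $A(G)$; hence the elements $W_\beta^{-1}(g*\check f)$ span a dense subspace of $A(G,W_\beta)$, and it suffices to approximate each of these by elements of $\A_r$. Given $\eps>0$, Theorem~\ref{thm-density-domain} furnishes $h\in\Hi^a_r$ with $\|f-W_\beta h\|_2<\eps$, and then $g*\check h\in\A_r$ satisfies, again via Remark~\ref{rem-weighted-Fourier-element} and linearity of inversion,
\[
\|W_\beta^{-1}(g*\check f)-g*\check h\|_{A(G,W_\beta)}=\|g*(f-W_\beta h)^\lor\|_{A(G)}\le\|g\|_2\|f-W_\beta h\|_2<\|g\|_2\,\eps .
\]
Letting $\eps\to0$ places $W_\beta^{-1}(g*\check f)$ in the closure of $\A_r$, which gives density.

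The main obstacle is the density step, and it is entirely localized in the input from Theorem~\ref{thm-density-domain}: the whole argument reduces to approximating an arbitrary $f\in L^2(G)$ by $W_\beta h$ with $h\in\Hi^a_r$, which is where the standing assumption that $\Hi^a_R$ be dense in $\Hi$ is consumed. The remaining cases are routine. For $\beta=0$ one has $W_0=I$ and $A(G,W_0)=A(G)$, so the needed density of $W_0(\Hi^a_r)=\Hi^a_r$ is the hypothesis itself and all estimates hold verbatim. For $R=\infty$ one takes $r=\infty$ and any $\beta\ge0$, chooses any $s>\sqrt k\,\beta$, and repeats both arguments using the $r=\infty$ cases of Theorem~\ref{thm-density-domain} and Proposition~\ref{prop-analytic}.
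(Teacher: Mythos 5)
Your proposal is correct and follows essentially the same route as the paper: continuity comes from Remark~\ref{rem-weighted-Fourier-element} together with the bound \eqref{eq-W-E-norm-compare} of Proposition~\ref{prop-analytic} for a seminorm parameter $s$ with $\sqrt{k}\,\beta<s<r$, and density comes from approximating $f$ by $W_\beta\tilde f$ with $\tilde f\in\Hi^a_r$ via Theorem~\ref{thm-density-domain}. Your explicit handling of the infimum over representations and of the degenerate case $\beta=0$ (where $W_0=I$ and the hypothesis itself supplies the density) is just more detailed bookkeeping than the paper's, not a different argument.
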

\begin{proof}
We only consider the case $R<\infty$ since the case $R=\infty$ can be done similarly.
Choose $0<s<r$ so that $\beta<\frac{s}{\sqrt{k}}$. Let $g \in L^2(G)$ and $f \in \Hi_r^a$. Then,  by Proposition~\ref{prop-analytic}, $f \in \D (W_\beta)$ and by Remark \ref{rem-weighted-Fourier-element} and \eqref{eq-W-E-norm-compare} we have
$$\| \Phi_r ( g \otimes f) \|_{A(G,W_\beta)} \leq \|g\|_2 \|W_\beta f \|_2 \leq \|g\|_2 E_s (f).$$
Thus, $\Phi_r$ is continuous as a map from $L^2(G) \otimes_\pi \Hi_r ^a$ into $A(G,W_\beta)$, which means that $\A_r \subseteq A(G, W_\beta)$ and the inclusion is continuous.

For $h \in A(G,W_\beta)$ we have $h = W_{\beta} ^{-1}\phi$ for some $\phi = g*\check{f}$ with $g,f\in L^2(G)$.
For any $\eps>0$ we apply Theorem \ref{thm-density-domain} to choose $\tilde{f}\in \Hi^a_r$ such that $\|f-W_\beta \tilde{f}\|_2\le \eps.$ 
Then, for $\tilde{h} = g*\check{\tilde{f}} \in \A_r$ we have by Remark~\ref{rem-weighted-Fourier-element} that
\begin{align*}
\|h - \tilde{h}\|_{A(G,W_\beta)}
& = \| g * (W_\beta ^{-1}f - \tilde{f})^\lor \|_{A(G, W_\beta)}\\
& = \| g * (f - W_\beta\tilde{f})^\lor \|_{A(G)}\\
& \le \|g\|_2\cdot\|f-W_\beta \tilde{f}\|_2
\le \eps \|g\|_2.
\end{align*}
This explains the density we wanted.
\end{proof}

The density of $\A_r$ in $A(G,W_\beta)$ can be transferred to the case of general $A(G,W)$ using the following observation.

\begin{prop}\label{prop-weight-comparison}
Let $W_1$ and $W_2$ be strongly commuting weights on the dual of $G$ with $W^{-1}_2\le C \cdot W^{-1}_1$ for some $C>0$. Then the space $A(G, W_2)$ continuously embeds in $A(G, W_1)$ with a dense image.
\end{prop}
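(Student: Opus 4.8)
The plan is to exploit the description $A(G,W_i) = W_i^{-1}A(G) = \om_i A(G)$, where $\om_i := W_i^{-1}$ is the bounded, positive, injective weight inverse in $VN(G)$, together with the natural $VN(G)$-module structure on $A(G)$ given by $(T, S\cdot\phi) := (TS,\phi)$ for $S,T\in VN(G)$, $\phi\in A(G)$, which satisfies $\|S\cdot\phi\|_{A(G)}\le \|S\|\,\|\phi\|_{A(G)}$. The key object is the operator $D := W_1 W_2^{-1} = \om_1^{-1}\om_2$. Since $W_1$ and $W_2$ strongly commute, they admit a joint spectral measure $E$, and writing $W_1 = \int a\,dE(a,b)$ and $W_2 = \int b\,dE(a,b)$, the hypothesis $W_2^{-1}\le C\cdot W_1^{-1}$ reads $b^{-1}\le C a^{-1}$ a.e.-$E$, i.e. $a\le Cb$. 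Hence $D = \int (a/b)\,dE$ is a well-defined bounded positive operator lying in $VN(G)$ with $\|D\|\le C$; moreover $a/b>0$ a.e., so $D$ is injective with dense range, and $\om_1 D = \int a^{-1}(a/b)\,dE = \int b^{-1}\,dE = \om_2$.

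For the continuous embedding, take $\psi = W_2^{-1}\phi\in A(G,W_2)$ with $\phi\in A(G)$. First I would check the identity $W_2^{-1}\phi = W_1^{-1}(D\cdot\phi)$ by pairing against an arbitrary $T\in VN(G)$: using the definition of the module action and $\om_1 D = \om_2$,
\[
(T, W_1^{-1}(D\cdot\phi)) = (T\om_1, D\cdot\phi) = (T\om_1 D,\phi) = (T\om_2,\phi) = (T, W_2^{-1}\phi).
\]
Since $D\cdot\phi\in A(G)$, this exhibits $\psi$ as an element of $A(G,W_1)$, and by the definition of the two norms,
\[
\|\psi\|_{A(G,W_1)} = \|D\cdot\phi\|_{A(G)} \le \|D\|\,\|\phi\|_{A(G)} \le C\,\|\psi\|_{A(G,W_2)},
\]
giving the continuous inclusion with norm at most $C$.

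For density, I would pass to the isometric bijections $\Theta_i : A(G)\to A(G,W_i)$, $\phi\mapsto W_i^{-1}\phi$. The computation above shows that the inclusion $\iota: A(G,W_2)\hookrightarrow A(G,W_1)$ factors as $\iota = \Theta_1\circ M_D\circ\Theta_2^{-1}$, where $M_D(\phi) := D\cdot\phi$; hence the image of $\iota$ equals $\Theta_1(D\cdot A(G))$, which is dense in $A(G,W_1)=\Theta_1(A(G))$ if and only if $D\cdot A(G)$ is dense in $A(G)$. The latter I would establish by computing the annihilator of $D\cdot A(G)$ in $A(G)^* = VN(G)$: it equals $\{T\in VN(G): (TD,\phi)=0\ \forall\phi\in A(G)\} = \{T : TD = 0\}$, since $A(G)$ separates points of $VN(G)$. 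But $TD=0$ forces $T$ to vanish on the dense range of $D$, whence $T=0$. By Hahn--Banach, $D\cdot A(G)$ is dense, completing the proof.

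The main obstacle is the first paragraph: one must justify that $D = W_1 W_2^{-1}$ genuinely defines a \emph{bounded} element of $VN(G)$ (not merely an unbounded affiliated operator) with $\|D\|\le C$, and that it is injective with dense range. This is exactly where both hypotheses are used --- strong commutativity provides the joint functional calculus that turns the operator inequality into the scalar inequality $a\le Cb$, and the injectivity (${\rm Ker}\,\om_i=\{0\}$) built into the definition of a weight inverse guarantees $a/b>0$ a.e. Everything afterwards is careful but routine bookkeeping of the module-action pairings together with a standard annihilator argument.
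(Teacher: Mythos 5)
Your proof is correct, and while the continuity half is essentially an expanded version of what the paper dismisses as ``clear from the definition'' (the paper does not spell out the bounded operator $D=W_1W_2^{-1}$, but that is exactly the content of its claim), your density argument takes a genuinely different route. The paper reduces density to the statement that $\D(W_2)$ is dense in $\D(W_1)$ in the graph norm, obtains this from the existence of a common core for the strongly commuting operators $W_1,W_2$ (citing \cite[Corollary 5.28]{Schm12}), and then reruns the explicit approximation of Corollary \ref{cor-density-algebra}: given $h=W_1^{-1}(g*\check f)$, choose $\tilde f\in\D(W_2)$ with $\|f-W_1\tilde f\|_2$ small and use $\|g*(f-W_1\tilde f)^\lor\|_{A(G)}\le\|g\|_2\|f-W_1\tilde f\|_2$. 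You instead factor the inclusion as $\Theta_1\circ M_D\circ\Theta_2^{-1}$ and prove density of $D\cdot A(G)$ in $A(G)$ by computing its annihilator in $VN(G)$: it is $\{T:TD=0\}=\{0\}$ because $D=\int (a/b)\,dE$ has dense range, and Hahn--Banach finishes. Both arguments use strong commutativity in an essential way (the paper for the common core, you for the joint functional calculus that makes $D$ a bounded injective element of $VN(G)$ with $\|D\|\le C$), but yours is softer and more self-contained --- it avoids the external operator-theoretic citation and the detour through concrete coefficient functions --- at the cost of having to verify carefully that the operator inequality $W_2^{-1}\le CW_1^{-1}$ translates into the pointwise inequality $a\le Cb$ on the joint spectrum, which you do correctly.
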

\begin{proof}
From the definition of Beurling-Fourier algebras it is clear that the space $A(G, W_2)$ embeds continuously in $A(G, W_1)$. We can establish the wanted density, following the same argument as in the proof of Corollary \ref{cor-density-algebra} if $\D(W_2)$ is dense in $\D(W_1)$ in the graph norm. Indeed, we may use \cite[Corollary 5.28]{Schm12} to see that the operators $W_1$ and $W_2$ have a common core, which leads us to the conclusion we need.
\end{proof}

Now we get condition (A1) in the Introduction for certain weight $W$.

\begin{cor}\label{cor-A1}
Suppose $0<R\le \infty$ is such that $\Hi^a_R$ is dense in $\Hi$ and take $\displaystyle 0 \leq \beta < \frac{R}{\sqrt{k}}$.
Let $W_H$ be a weight on the dual of $H$ associated with the weight function $w: \widehat{H} \to (0,\infty)$ such that $w \le C\cdot w_\beta$ for some $C>0$.
Here, $w_\beta$ is the weight function from \eqref{eq-w-beta}.
Then, for $\beta\sqrt{k}<r<R$,
the algebra $\A_r$ continuously embeds in $A(G, W)$ with a dense image, where $W = \iota(W_H)$ be the extended weight on the dual of $G$.
When $R=\infty$, the same conclusion holds for $r=\infty$.
\end{cor}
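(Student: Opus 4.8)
The plan is to factor the desired embedding $\A_r \hookrightarrow A(G,W)$ through the exponential weight $W_\beta$, combining the two results already established in this section: Corollary~\ref{cor-density-algebra} (density of $\A_r$ in $A(G,W_\beta)$) and Proposition~\ref{prop-weight-comparison} (density of one Beurling--Fourier algebra in another under domination of weight inverses). Thus the whole argument reduces to verifying the hypotheses of these two results and then chaining their conclusions.

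First I would translate the pointwise hypothesis $w \le C\cdot w_\beta$ on $\widehat{H}$ into a comparison of the corresponding weight inverses. Recall from Section~\ref{subsec-weights} that both $W$ and $W_\beta$ arise from the multi-variable Borel functional calculus of the \emph{same} strongly commuting family $\{i\partial\lambda(X_1),\dots,i\partial\lambda(X_k)\}$, namely $W = w(i\partial\lambda(X_1),\dots,i\partial\lambda(X_k))$ and $W_\beta = e^{\beta(|\partial\lambda(X_1)|+\cdots+|\partial\lambda(X_k)|)}$ as in \eqref{eq-exp-weight-beta}; in particular $W$ and $W_\beta$ strongly commute. Since $0<w\le C\cdot w_\beta$ pointwise, passing to reciprocals gives the pointwise bound $w_\beta^{-1}\le C\cdot w^{-1}$ between the two (bounded) weight-inverse functions. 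Applying the monotonicity of the functional calculus over this common strongly commuting family then yields the operator inequality $W_\beta^{-1}\le C\cdot W^{-1}$ between the bounded positive weight inverses.

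With the strong commutativity and the inequality $W_\beta^{-1}\le C\cdot W^{-1}$ in hand, Proposition~\ref{prop-weight-comparison} applied with $W_1 = W$ and $W_2 = W_\beta$ shows that $A(G,W_\beta)$ continuously embeds in $A(G,W)$ with a dense image. On the other side, the constraint $\beta < R/\sqrt{k}$ forces $\beta\sqrt{k} < R$, so any $r$ with $\beta\sqrt{k}<r<R$ satisfies $0\le\beta<r/\sqrt{k}$; hence Corollary~\ref{cor-density-algebra} applies and gives a continuous dense embedding $\A_r \hookrightarrow A(G,W_\beta)$. Composing
    $$\A_r \hookrightarrow A(G,W_\beta) \hookrightarrow A(G,W)$$
finishes the proof: a composition of continuous maps is continuous, and if $f\colon X\to Y$ has dense range and $g\colon Y\to Z$ is continuous with dense range, then $\overline{g(f(X))}\supseteq \overline{g(\overline{f(X)})} = \overline{g(Y)} = Z$, so the composite again has dense image. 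The case $R=\infty$ is handled identically with $r=\infty$, using the corresponding statements in the two cited results.

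The step I expect to be the main obstacle is the rigorous passage in the second paragraph from the pointwise domination $w\le C\cdot w_\beta$ to the operator inequality $W_\beta^{-1}\le C\cdot W^{-1}$: one must make sure that both weights genuinely come from the functional calculus of a single strongly commuting family (so that monotonicity of the joint functional calculus applies verbatim) and that the reciprocals are interpreted as the appropriate bounded positive elements of $VN(G)$ provided by Definition~\ref{def-weight}. Once this comparison and the strong commutativity are secured, everything else is a formal chaining of the two previously proved embedding results.
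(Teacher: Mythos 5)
Your proposal is correct and follows exactly the paper's own route: the paper's proof is the one-line "immediate from \eqref{eq-domination}, Corollary \ref{cor-density-algebra} and Proposition \ref{prop-weight-comparison}," which is precisely the chain $\A_r \hookrightarrow A(G,W_\beta) \hookrightarrow A(G,W)$ you describe. Your extra care in translating $w\le C\cdot w_\beta$ into the operator inequality $W_\beta^{-1}\le C\cdot W^{-1}$ via the joint functional calculus of the strongly commuting family $\{i\partial\lambda(X_j)\}_{j=1}^k$ is a valid and welcome elaboration of what the paper leaves implicit.
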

\begin{proof}
This is immediate from \eqref{eq-domination}, Corollary \ref{cor-density-algebra} and Proposition \ref{prop-weight-comparison}.
\end{proof}

\section{Analytic evaluations and a local solution}\label{sec-local-sol}

In this section, we would like to show that the analytic subalgebra $\A_r$ (satisfying condition (A1) of the Introduction) satisfies (A3'), a ``local" version of condition (A3) of the Introduction.
In order to do that, we first need to check that functions in the analytic subalgebra $\A_r$ (for small $r>0$) extend holomorphically to an open neighborhood of $G$ within $G_\Comp$. 
Here, $G_\Comp$ is a fixed complexification of $G$ in the sense that it is a complex Lie group with the associated Lie algebra $\g_\Comp\cong \Comp^d$ that contains $G$ as a Lie subgroup. 

For $t>0$ we consider open neighborhoods of the origin in the Lie algebras 
$$B_t := \{ X \in \g_\Comp : |X| < t \}\;\; \text{and}\;\; B_t ^\g := B_t \cap \g$$
and open neighborhoods of the identity in the Lie groups
$$\Om_t := \{\exp X: X\in \g_\Comp,\; |X|<t\}\;\; \text{and}\;\;\Om^G_t := \Om_t \cap G.$$ We also consider an open neighborhood $G_t$ of $G$ within $G_\Comp$ (for small $t>0$) given by
\begin{equation}\label{eq-G_t}
    G_t := \{ s \exp(iY): s \in G, \; Y \in B_t^\g \}.
\end{equation}
The openness of $G_t$ for small $t>0$ requires explanations. Indeed, we can choose $t>0$ such that
\begin{equation}\label{eq-E}
    E:G \times \g \rightarrow G_\Comp,\;\; (s, Y) \mapsto s \exp (iY)
\end{equation}
is a diffeomorphism from $\Om_t^{G} \times B^\g _t$ onto an open subset of $G_\Comp$. Note that we used the fact that $E$ is a smooth map with the derivative
$$\g \times \g \ni (X, Y) \mapsto X+iY \in \g_\Comp$$
at the point $(e,0) \in G \times \g$.
Since $E(s_1 s_2, Y) = s_1E(s_2,Y)$ for all $s_1, s_2 \in G$ and $Y \in \g$ by definition, we see that $E$ is also a diffeomorphism from $s\Om_t^{G} \times B^\g _t$ onto an open subset of $G_\Comp$ for each $s \in G$. Thus, we see that
    $$G_t = E ( G \times B_t ^\g ) = \bigcup_{s \in G} E( s \Om_t^G \times B_t ^\g) $$
is an open subset of $G_\Comp$.

We first check the analytic extendability of $\A_r$, which is nontrivial.

\begin{prop}\label{prop-hol-ext}
There exists $r >0$ such that every function $u\in \A_{r}$ allows a unique holomorphic extension to $G_r$.
\end{prop}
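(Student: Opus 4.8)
The plan is to extend the underlying vector-valued orbit map and then pair it with a fixed functional. Fix $r>0$ small enough that $\exp\colon B_r\to\Om_r$ and the map $E$ of \eqref{eq-E} are diffeomorphisms onto their images, so that $G_r=E(G\times B_r^\g)$ is an open tube around $G$ with connected fibres $s\exp(iB_r^\g)$. Given a generator $u=g*\check f$ with $g\in L^2(G)$ and $f\in\Hi^a_r$, one computes $u(x)=\la\lambda(x)f,\bar g\ra$, so it suffices to holomorphically extend the $\Hi$-valued orbit map $x\mapsto\lambda(x)f$ to $G_r$ and then apply the bounded functional $\la\,\cdot\,,\bar g\ra$.

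For each $x_0\in G$ the left-translated exponential map $B_r\ni Z\mapsto x_0\exp Z$ is a holomorphic chart onto $V_{x_0}:=x_0\Om_r$, and I set $F_{x_0}(x_0\exp Z):=\lambda(x_0)\,e^{d\lambda(Z)}f$. By Proposition~\ref{prop-holomorphic-ext} the map $Z\mapsto e^{d\lambda(Z)}f$ is holomorphic on $B_r$, so each $F_{x_0}$ is a holomorphic $\Hi$-valued map on $V_{x_0}$; for real $Z=X$ one has $F_{x_0}(x_0\exp X)=\lambda(x_0)\lambda(\exp X)f=\lambda(x_0\exp X)f$, so every $F_{x_0}$ restricts on $V_{x_0}\cap G$ to the single map $x\mapsto\lambda(x)f$. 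I would then glue these local pieces: on an overlap $V_{x_0}\cap V_{x_1}$ the holomorphic maps $F_{x_0},F_{x_1}$ agree on $(V_{x_0}\cap V_{x_1})\cap G$, and since $\g\cap i\g=\{0\}$ makes $G$ a maximally totally real submanifold of real dimension $d=\dim_\Comp G_\Comp$, any holomorphic map vanishing on an open piece of $G$ vanishes on a neighbourhood of it, hence on each overlap component that meets $G$. Shrinking $r$ so that these components meet $G$, the $F_{x_0}$ glue to a single holomorphic $F\colon G_r\to\Hi$, which in the coordinates $E$ takes the form $F(s\exp(iY))=\lambda(s)e^{i\partial\lambda(Y)}f$.

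Defining $\tilde u:=\la F(\cdot),\bar g\ra$ then gives a holomorphic function on $G_r$ with $\tilde u|_G=u$. To pass from the generators $g\otimes f$ to all of $\A_r$, I would use the norm estimate behind Proposition~\ref{prop-holomorphic-ext}, namely $\|e^{d\lambda(Z)}f\|\le E_{|Z|}(f)$, which yields $\sup_{p\in K}\|F(p)\|\le E_{\delta}(f)$ on any compact $K\subseteq G_r$ with $\delta:=\sup\{|Y|:s\exp(iY)\in K\}<r$; hence $g\otimes f\mapsto\tilde u$ is continuous from $L^2(G)\otimes_\pi\Hi^a_r$ into the Fr\'echet space $\mathcal O(G_r)$ of holomorphic functions under local uniform convergence. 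This map factors through $\Phi_r$: any representative of the zero function restricts to $0$ on $G$, so its holomorphic extension vanishes on the connected set $G_r$ by the same identity theorem; by continuity and density of the finite sums the extension is therefore well defined on all of $\A_r$. Uniqueness is immediate, since two holomorphic extensions agree on $G$ and $G_r$ is connected, each fibre $s\exp(iB_r^\g)$ being connected and meeting the connected set $G$.

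The one genuinely delicate step is the gluing in the second paragraph: it amounts to the analytic continuation of the homomorphism identity $\lambda(x_0)\lambda(\exp X)=\lambda(x_0\exp X)$ from real to complex exponents, and making it rigorous requires the maximally-totally-real structure of $G$ together with control of the connected components of the chart overlaps. By contrast, the local holomorphy of the pieces is a direct consequence of Proposition~\ref{prop-holomorphic-ext}, and the passage to all of $\A_r$ is routine soft analysis once the continuity estimate is in hand.
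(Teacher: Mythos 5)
Your overall architecture --- extend the generator $u=g*\check f$ via the vector-valued map $s\exp(iY)\mapsto\lambda(s)e^{i\partial\lambda(Y)}f$, pair with $g$, and pass to all of $\A_r$ using the estimate $\|e^{d\lambda(Z)}f\|\le E_{|Z|}(f)$ together with the identity theorem --- is the same as the paper's, and your handling of the continuity into $\mathrm{Hol}(G_r)$, the factorization through $\Ker\Phi_r$, and the uniqueness are sound. The genuine gap is exactly the step you yourself flag: the gluing of the local pieces $F_{x_0}(x_0\exp Z)=\lambda(x_0)e^{d\lambda(Z)}f$. Agreement of $F_{x_0}$ and $F_{x_1}$ on $(V_{x_0}\cap V_{x_1})\cap G$, combined with the identity principle for the maximally totally real submanifold $G$, only forces agreement on those connected components of $V_{x_0}\cap V_{x_1}$ that meet $G$. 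You dispose of the remaining components with ``shrinking $r$ so that these components meet $G$,'' but this cannot be arranged by fiat: $V_{x_0}\cap V_{x_1}$ is nonempty whenever $x_0^{-1}x_1\in\Om_r\Om_r$, whereas it meets $G$ only when $x_0^{-1}x_1\in\Om_r^G\cdot(\Om_r^G)^{-1}$, and for nonabelian $G$ there is no a priori reason that every component of the ``lens'' $\Om_r\cap (x_0^{-1}x_1)\Om_r$ contains a real point, nor that this can be repaired uniformly in $x_0,x_1$ by shrinking $r$. Until this is settled, your glued $F$ is not known to be single-valued, and the construction stalls at its central step.

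The paper avoids this issue by reversing the order of operations: it first proves that $E$ from \eqref{eq-E} is a global diffeomorphism of $G\times B_r^\g$ onto $G_r$ (which requires the separate injectivity argument via $\Om_t\Om_t\subseteq\Om_T$, a point you also gloss over), so that $u_\Comp(s\exp(iY)):=\la\lambda(s)e^{i\partial\lambda(Y)}f,g\ra$ is well defined with no gluing at all; holomorphy is then verified purely locally in the chart $s\exp(Z)\mapsto Z$ by writing $Z=\Phi(X,iY)$ with the Baker--Campbell--Hausdorff map and rearranging an absolutely convergent double series to obtain $e^{\partial\lambda(X)}e^{i\partial\lambda(Y)}f=e^{d\lambda(\Phi(X,iY))}f=e^{d\lambda(Z)}f$, after which Proposition~\ref{prop-holomorphic-ext} applies. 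That series computation is precisely the analytic substitute for the continuation of the homomorphism identity that your gluing would need. If you wish to keep your route, you should prove the consistency identity $\lambda(x_0)e^{d\lambda(Z_0)}f=\lambda(x_1)e^{d\lambda(Z_1)}f$ whenever $x_0\exp Z_0=x_1\exp Z_1$ by such a direct series manipulation, rather than by a topological argument about the components of chart overlaps.
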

\begin{proof}
We choose $T>0$ such that $\exp:B_T \rightarrow \Om_T$ is a biholomorphism and $E$ from \eqref{eq-E} is a diffeomorphism from $\Om_T^G \times B^\g _T$ onto an open subset of $G_\Comp$.
For small $0 < t < T$ such that $\Om_t \Om_t \subseteq \Om_T$ we claim that $E$ is a diffeomorphism from $G \times B^\g _t$ onto $G_t$.
Since $E$ is a local diffeomorphism on $G \times B^\g _t$ by construction, we only need to show that $E$ is injective on $G \times B^\g _t$. Let $(s_1, Y_1), \; (s_2 ,Y_2) \in G \times B^\g _t$ be such that $E( s_1, Y_1 ) = E(s_2 , Y_2)$. Then, $s_1 \exp (iY_1) = s_2 \exp (iY_2)$ and hence
    $$s_1 ^{-1} s_2 = \exp(iY_1) \exp (- iY_2) \in \Om_t \Om_t \subseteq \Om_T.$$
Since $E$ is a diffeomorphism on $s_1 \Om_T \times B_T ^\g$ as before and $(s_1, Y_1) , (s_2, Y_2) \in s_1 \Om_T \times B_T ^\g$, we see that the condition $E(s_1, Y_1) = E(s_2, Y_2)$ implies that $(s_1 , Y_1) = (s_2 ,Y_2)$. Thus, our claim is proved.

Using the fact that the map
    $$\Psi: \g \times i \g \to G_\Comp,\;\; (X,iY) \mapsto \exp(X) \exp(iY)$$
is a diffeomorphism near $(0,0)$, we can choose a product neighborhood $U_1 \times i U_2$ of $(0,0)$ within $B_t ^\g \times i B_t ^\g$, on which this map is a diffeomorphism.
Since $\Psi(U_1 \times i U_2)\subseteq \Om_T$, we get another diffeomorphism $\Phi = \exp^{-1}|_{\Om_T}\circ \Psi$ satisfying
    $$\exp (X) \exp(iY) = \exp \big( \Phi(X,iY) \big),\;\; (X, iY) \in U_1 \times i U_2.$$
Shrinking $U_1$ and $U_2$ if necessary, we may assume that $\Phi$ is given by a restriction of the Baker-Campbell-Hausdorff (shortly, BCH) map to $U_1 \times i U_2 \subseteq \g_\Comp \times \g_\Comp$ (cf. \cite[Appendix~B.4]{Knapp}) and, moreover, $U_2 = B_r ^\g$ for some $0 < r < t$ so that $\Phi$ is a diffeomorphism from $U_1 \times i B_r ^\g$ onto a neighborhood $U_\Comp$ of $0$ within $B_T\subseteq \g_\Comp$.

Now, we are ready to prove the existence of holomorphic extensions of functions in $\A_r$ with the above choice of $0<r<t$. First, we consider the case $u = \bar{g}* \check{f}$ with $g \in L^2(G)$ and $f \in \Hi^a _r$. Using the inverse of the diffeomorphism $E|_{G \times B_r ^\g}$ onto $G_r$ we get a well-defined extension $u_\Comp$ on $G_r$ of $u$ given by
\begin{equation}\label{eq-hol-ext}
u_\Comp(s \exp (iY)):= \la \lambda(s) e^{i \partial \lambda (Y)} f , g \ra,\;\; s \exp (iY) \in G_r.
\end{equation}
We need to check that $u_\Comp$ is indeed holomorphic, which forces us to choose a suitable holomorphic chart system for $G_r$. Indeed, for $s \exp(iY_0) \in G_r$
we choose $\delta>0$ such that $|Y_0| + 2 \delta < r$ and small neighborhoods $0 \in V_1 \subseteq U_1 \cap B_{\delta} ^\g$ and $Y_0 \in V_2 \subseteq Y_0 + B_\delta ^\g \subseteq B_r ^\g$. From the choice of $V_2$ we can see that the open set $s \exp (V_\Comp) \subseteq G_r$ for $V_\Comp := \Phi (V_1 \times i V_2) \subseteq U_\Comp$ together with the coordinates map
    $$s \exp (V_\Comp) \to \g_\Comp,\;\; s \exp(Z) \mapsto Z$$
gives a holomorphic chart for the point $s \exp(iY_0) \in G_r$. Since $\exp(V_\Comp) = \exp(V_1) \exp(i V_2)$ and $i \overline{V_2} \subseteq B_r $ by our choice of $\delta$, we may shrink $V_1$ further to assume that $V_\Comp \subseteq B_r$.

For $Z = \Phi(X, iY) \in V_\Comp$ with $X\in V_1$ and $Y\in V_2$ we have
\begin{equation}\label{eq-u-Comp}
    u_\Comp (s \exp(Z) ) = u_\Comp(s \exp(X) \exp(i Y) ) = \la \lambda(s) e^{\partial \lambda(X)} e^{i \partial \lambda (Y)} f , g \ra.
\end{equation}
On the other hand, we have
\begin{align}\label{eq-double-series}
e^{\partial \lambda (X)} e^{i \partial \lambda (Y)} f
& = e^{\partial \lambda (X)} \sum_{m=0} ^\infty \frac{1}{m!} (i \partial \lambda (Y))^m f \nonumber \\
& = \sum_{m=0} ^\infty \frac{1}{m!} e^{\partial \lambda (X)} ( i \partial \lambda (Y))^m f \nonumber \\
& = \sum_{m=0} ^\infty \sum_{n=0} ^\infty \frac{1}{m! n!} \partial \lambda (X)^n (i\partial \lambda (Y)) ^m f.
\end{align}
Note that we used Lemma~\ref{lem-abs-conv} and the fact that $e^{\partial \lambda(X)} = \lambda (\exp X)$ is a unitary and $\partial \lambda(Y)^m f \in \Hi^a _r$ for each $m \in \mathbb{N}_0$ from Proposition~\ref{prop-analytic evaluation-domain}.
Furthermore, using \eqref{eq-rho-comp} we have
\begin{align*}
    \sum_{m=0} ^\infty \sum_{n=0} ^\infty \frac{1}{m!n!} \| \partial \lambda(X)^n \partial \lambda(Y)^m f \|
    & \leq \sum_{m=0} ^\infty \sum_{n=0} ^\infty \frac{1}{m!n!} |X|^n |Y|^m \rho_{n+m} (f) \\
    & \leq \sum_{k=0} ^\infty \frac{ (|X|+ |Y|)^k}{k!} \rho_k (f)\\
    & \le E_{|X|+ |Y|}(f)< \infty
\end{align*}
since $|X|+ |Y| < \delta + |Y-Y_0 | + |Y_0| < 2 \delta + |Y_0| < r$ and $f\in \Hi^a_r$.
Therefore, the above double series \eqref{eq-double-series} converges absolutely so that we can rearrange the series into a sum of the homogeneous terms to obtain
    $$e^{\partial \lambda (X)} e^{i \partial \lambda (Y)} f = \sum_{k=0} ^\infty \frac{1}{k!} d \lambda \big( \Phi ( X, i Y ) \big)^{k} f = e^{d \lambda( \Phi(X, iY))} f = e^{d \lambda (Z)} f$$
since the BCH map $\Phi$ is given by a universal algebraic formula. Recall that $|Z|<r$ so that Proposition \ref{prop-holomorphic-ext} and \eqref{eq-u-Comp} tell us that $u_\Comp$ is holomorphic near the point $s \exp(iY_0)$ and consequently on $G_r$.

For the uniqueness of the extension, we consider small enough $\eps>0$ to get $\exp (C_\eps )  \subseteq G_r$ for
    $$C_\eps :=\{ Z = z_1 X_1 + \cdots + z_d X_d \in \g_\Comp : |z_j| < \eps, \; \forall j=1 ,\cdots ,d  \}.$$
The domain $C_\eps$ being a cube in $\Comp^d$ allows us to apply the identity theorem for single variable holomorphic functions iteratively to obtain the uniqueness of the holomorphic extension of $u|_{\exp(C_\eps)\cap G}$ to $\exp(C_\eps)$, which is a non-trivial open set in a connect domain $G_r$. Thus, we can now appeal to the identity theorem for several complex variables to reach the desired conclusion.


Since the point evaluations on $G_r$ are continuous linear functionals on $\text{Hol}(G_r)$, the space of all holomorphic functions on $G_r$, the conclusion for general $u \in \A_r$ follows immediately by density if we check that the inclusion map
     $$\Phi_{r}(L^2(G)\odot \Hi^a_{r}) \hookrightarrow {\rm Hol}(G_r)$$
is continuous with respect to the canonical Fr\'echet space structures on $\A_{r}$ and ${\rm Hol}(G_r)$. Let $0 < r' < r$ and note that we have
\begin{align*}
|\la e^{ i\partial\lambda(Y)} f, \lambda(s^{-1})g\ra|
&\le \|g\|\cdot \|e^{i \partial\lambda(Y)} f\|
\le \|g\|\cdot \sum_{n\ge 0}\frac{  \|(i \partial\lambda(Y))^nf\|}{n!}\\
&\le \|g\|\cdot \sum_{n\ge 0}\frac{|Y|^n\rho_n(f)}{n!}
\le \|g\|\cdot E_{|Y|}(f)
\le \|g\| \cdot E_{r'} (f)
\end{align*}
for any $s\in G$ and $Y \in B^\g _{r'}$, which proves the continuity we wanted.

The uniqueness of the holomorphic extension for this case is also proved in the same manner.
\end{proof}

Now we prove condition A(3'), which is a ``local" version of condition (A3) for a specific choice of weights $W_\beta$.

\begin{thm}\label{thm-eval-action}
Suppose that $r>0$ is the constant chosen in Proposition \ref{prop-hol-ext} and $\displaystyle 0 < \beta < \frac{r}{\sqrt{k}}$. Then, every element in ${\rm Spec}A(G,W_\beta)$ acts on the algebra $\A_r$ as an evaluation functional on the points of an open neighborhood $G_r$ 
of $G$. 
\end{thm}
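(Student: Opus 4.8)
The plan is to reduce the statement to three facts already established: the explicit form of the spectrum in Theorem~\ref{thm-spectrum-extended-weight-general}, the duality computation~\eqref{eq-spec-duality} in Remark~\ref{rem-spec-duality}(3), and the holomorphic extension formula~\eqref{eq-hol-ext} from Proposition~\ref{prop-hol-ext}; the identity will first be checked on the elementary generators of $\A_r$ and then propagated to all of $\A_r$ by continuity. So first I would fix an arbitrary $\chi \in {\rm Spec}A(G,W_\beta)$. By the second assertion of Theorem~\ref{thm-spectrum-extended-weight-general} it has the form $\chi = \lambda(s_0)e^{i\partial\lambda(X_0)}W_\beta^{-1}$ with $s_0 \in G$, $X_0 \in \mathfrak{h}$ and $|X_0|_\infty \le \beta$. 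I must then produce the point at which $\chi$ evaluates, and the natural candidate is $g_0 := s_0\exp(iX_0)$. To see that $g_0 \in G_r$, note that $X_0 \in \mathfrak{h} = \Span\{X_1,\dots,X_k\}$ has at most $k$ nonzero coordinates, so $|X_0| \le \sqrt{k}\,|X_0|_\infty \le \sqrt{k}\,\beta < r$ by the hypothesis $\beta < r/\sqrt{k}$; hence $X_0 \in B_r^\g$ and $g_0 = s_0\exp(iX_0) \in G_r$ by the definition~\eqref{eq-G_t}.

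Next I would establish the evaluation identity on a generator $u = \bar g * \check f$ with $g \in L^2(G)$ and $f \in \Hi^a_r$. By Proposition~\ref{prop-analytic} we have $f \in \D(W_\beta)$, which is exactly the hypothesis needed to invoke~\eqref{eq-spec-duality} with $\xi = f$ and $\eta = \bar g$ (so that $\eta * \check\xi = \bar g * \check f = u$ and $\bar\eta = g$). This gives
$$(\chi, u)_{W_\beta} = \la \lambda(s_0)e^{i\partial\lambda(X_0)} f, g\ra.$$
On the other hand, the holomorphic extension formula~\eqref{eq-hol-ext} evaluated at $s = s_0$, $Y = X_0$ reads $u_\Comp(s_0\exp(iX_0)) = \la \lambda(s_0)e^{i\partial\lambda(X_0)} f, g\ra$, so that $(\chi, u)_{W_\beta} = u_\Comp(g_0)$. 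Thus $\chi$ acts on every generator precisely as the evaluation $u \mapsto u_\Comp(g_0)$.

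To finish, I would extend this equality from generators to all of $\A_r$ by density. The functional $u \mapsto (\chi, u)_{W_\beta}$ is continuous on $\A_r$ because $\chi$ is a (necessarily bounded) character of the Banach algebra $A(G,W_\beta)$ and $\A_r$ embeds continuously into $A(G,W_\beta)$ by Corollary~\ref{cor-density-algebra}. The functional $u \mapsto u_\Comp(g_0)$ is continuous because point evaluation at $g_0$ is continuous on ${\rm Hol}(G_r)$ and the embedding $\A_r \hookrightarrow {\rm Hol}(G_r)$ is continuous, as shown in Proposition~\ref{prop-hol-ext}. Since the finite sums of generators $\bar g * \check f$ are dense in $\A_r$, the two continuous functionals agree everywhere, which is the assertion.

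The genuinely routine pieces are the elementary bound $|X_0| \le \sqrt{k}\,|X_0|_\infty$ and the density argument. The one point that deserves care, and which I regard as the main (if modest) obstacle, is the bookkeeping inside the duality step: matching the conjugations in $u = \bar g * \check f = \eta * \check\xi$ against the hypothesis $\xi \in \D(W_\beta)$ of~\eqref{eq-spec-duality}, and confirming that the operator $e^{i\partial\lambda(X_0)}$ produced by the functional calculus defining $\chi$ coincides with the one appearing in the extension~\eqref{eq-hol-ext}. This last consistency is guaranteed by the identification of $e^{d\lambda(Z)}$ with the functional calculus exponential around~\eqref{eq-rep-comp}, which rests on Lemma~\ref{lem-abs-conv}.
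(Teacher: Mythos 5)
Your proposal is correct and follows essentially the same route as the paper's own proof: both invoke Theorem \ref{thm-spectrum-extended-weight-general} for the form of the characters, combine Proposition \ref{prop-analytic} (giving $\Hi^a_r\subseteq \D(W_\beta)$) with the duality \eqref{eq-spec-duality} and the extension formula \eqref{eq-hol-ext} on generators $\bar g*\check f$, use the bound $|X|\le\sqrt{k}\,|X|_\infty\le\beta\sqrt{k}<r$ to place the evaluation point in $G_r$, and conclude by density. Your write-up merely makes explicit the continuity checks in the density step that the paper leaves implicit.
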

\begin{proof}
Recall Theorem \ref{thm-spectrum-extended-weight-general} saying that
$${\rm Spec}A(G,W_\beta) = \{\lambda(s)e^{i\partial\lambda(X)}W^{-1}_\beta \in VN(G): s\in G, X\in \mathfrak{h}, |X|_\infty \le \beta\}.$$
First, we consider $u = \bar{g}* \check{f} \in \A_r$ for $g \in L^2(G)$ and $f \in \Hi^a _r$. By Proposition \ref{prop-hol-ext} the function $u$ has a holomorphic extension $u_\Comp$ to $G_r\subseteq G_\Comp$. Since we have $\Hi^a_r\subseteq \D(W_\beta)$ by Proposition \ref{prop-analytic}
we may apply \eqref{eq-spec-duality} and \eqref{eq-hol-ext} to get
    $$(\lambda(s)e^{i\partial\lambda(X)}W^{-1}_\beta, u) = u_\Comp(s\exp(iX)),$$
where $|X|\le \sqrt{k}\cdot |X|_\infty \le \beta \sqrt{k} <r$.

The case of a general element in $\A_r$ can be done by density.
\end{proof}

\begin{rem}
Requiring the constant $r>0$ in Theorem \ref{thm-eval-action} to be $r<R$ for some $R\ge 0$ such that $\Hi_R$ is dense in $\Hi$
we get a subalgebra $\A_r$ satisfying condition (A1) (by Corollary \ref{cor-density-algebra}) and a ``local" version of condition (A3) for the weight $W_\beta$ with $\displaystyle 0 < \beta < \frac{r}{\sqrt{k}}$.
\end{rem}

\section{A global solution for connected, simply connected and nilpotent groups}\label{sec-global-sol}

In this section, we focus on the case of a connected, simply connected and nilpotent Lie group $G$ with the associated Lie algebra $\g \cong \Real^d$. 
Note that the complexification $\g_\Comp$ of $\g$ corresponds to the uniquely determined simply connected complex Lie group $G_\Comp$, which is a complexification of $G$.
We fix an ordered {\em Jordan-H\"{o}lder} basis $\{X_1, \cdots , X_d\}$ for $\g$ such that $\mathfrak{h} = \Span \{ X_1, \cdots , X_k\}$, $k\le d$, is an abelian Lie subalgebra and $H = \exp \mathfrak{h}$ is the connected abelian subgroup of $G$ corresponding to $\mathfrak{h}$.
Furthermore, the weight $W = \iota(W_H)$ on the dual of $G$ extends from a weight $W_H$ on the dual of $H$ as in Section \ref{subsec-weights}. Our goal in this section is to find a subalgebra $\A$ of $A(G,W)$ satisfying conditions (A1), (A2') and (A3") in the Introduction.

An immediate candidate for our subalgebra $\A$ would be the Fr\'echet algebra $\A_\infty$. Since $G_\Comp = \bigcup_{t>0}G_t$, where $G_t$ is from \eqref{eq-G_t}, we get the following by \eqref{eq-domination}, Corollary \ref{cor-A1} and Theorem \ref{thm-eval-action}. 

\begin{cor}
The Fr\'echet algebra $\A_\infty$ is continuously embedded in the Beurling-Fourier algebra $A(G,W)$ with a dense image and every element in ${\rm Spec}A(G,W)$ acts on $\A_\infty$ as an evaluation functional at the points of $G_\Comp$.
\end{cor}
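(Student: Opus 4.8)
The plan is to assemble the statement from results already in hand, the one genuinely new ingredient being that for a connected, simply connected, nilpotent $G$ we may take the scale $R=\infty$; that is, the space of entire vectors $\Hi^a_\infty$ is dense in $L^2(G)$. I would first record this density, which is classical for simply connected nilpotent groups (for instance, heat-kernel or Gaussian regularization of $C^\infty_c(G)$ produces entire vectors for $\lambda$), since every subsequent step rests on it. Granting it, Theorem \ref{thm-A_R-algebra} at $R=\infty$ makes $\A_\infty$ a genuine (nonzero) Fr\'echet algebra.

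For condition (A1) I would use \eqref{eq-domination} to fix $\beta>0$ and $C>0$ with $w\le C\,w_\beta$, and then apply Corollary \ref{cor-A1} in the borderline case $R=\infty$, $r=\infty$; this is permissible precisely because $\Hi^a_\infty$ is dense and every finite $\beta$ satisfies $\beta<R/\sqrt{k}=\infty$. The corollary then yields at once that $\A_\infty$ embeds continuously into $A(G,W)$ with dense image.

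For the evaluation action (A3'') I would take $\tau\in{\rm Spec}A(G,W)$ and write it, via Theorem \ref{thm-spectrum-extended-weight-general}, as $\tau=\lambda(s)e^{i\partial\lambda(X)}W^{-1}$ with $s\in G$ and $X\in\mathfrak{h}$. The crucial feature is that, for a general weight $W$, no size bound on $X$ survives, so the candidate evaluation point $s\exp(iX)$ may lie anywhere in $G_\Comp$; this is exactly why entire, not merely analytic, vectors are needed. For $u=\bar g*\check f$ with $f\in\Hi^a_\infty$, the inclusions $\Hi^a_\infty\subseteq\Hi^a_r\subseteq\D(W_\beta)\subseteq\D(W)$ (the last one from $w\le C\,w_\beta$ together with strong commutativity and functional calculus) and Remark \ref{rem-spec-duality}(1) place $f$ in $\D(e^{i\partial\lambda(X)})$, whence \eqref{eq-spec-duality} gives $(\tau,u)=\la\lambda(s)e^{i\partial\lambda(X)}f,g\ra$. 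Since $f$ is entire, Proposition \ref{prop-holomorphic-ext} makes $Z\mapsto e^{d\lambda(Z)}f$ holomorphic on all of $\g_\Comp$; applying Proposition \ref{prop-hol-ext} on each $G_t$ and patching by the uniqueness of holomorphic extensions produces a single holomorphic $u_\Comp$ on $G_\Comp=\bigcup_{t>0}G_t$ with $(\tau,u)=u_\Comp(s\exp(iX))$ by \eqref{eq-hol-ext}. The passage to a general $u\in\A_\infty$ is routine: the inclusion $\A_\infty\to{\rm Hol}(G_\Comp)$ is continuous (via an $E_{r'}$-bound as in Proposition \ref{prop-hol-ext}) and point evaluations are continuous, so density closes the argument.

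The step I expect to be the main obstacle is not a single estimate but the global character of the two borderline passages: legitimizing $R=r=\infty$, hence the density of $\Hi^a_\infty$, which is where simple connectedness and nilpotency enter, and upgrading the local holomorphic extensions over the $G_t$ to one honest function on all of $G_\Comp$ so that the now-unbounded parameter $X$ still determines a well-defined evaluation point. Both rest on the exhaustion $G_\Comp=\bigcup_{t>0}G_t$ and the compatibility of extensions supplied by Proposition \ref{prop-hol-ext}.
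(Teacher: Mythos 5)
Your proposal is correct and follows essentially the same route as the paper, which obtains the corollary in one line from the domination $w\le C w_\beta$ \eqref{eq-domination}, Corollary \ref{cor-A1} in the case $r=R=\infty$, Theorem \ref{thm-eval-action}, and the exhaustion $G_\Comp=\bigcup_{t>0}G_t$; your unwinding of the evaluation step directly from Theorem \ref{thm-spectrum-extended-weight-general} and \eqref{eq-spec-duality} is just the proof of Theorem \ref{thm-eval-action} written out for entire vectors. You are in fact more explicit than the paper on the two points it leaves implicit — the density of $\Hi^a_\infty$ in $L^2(G)$ for simply connected nilpotent $G$ (needed to invoke Corollary \ref{cor-A1} with $R=\infty$) and the globalization of the holomorphic extension beyond the small $r$ of Proposition \ref{prop-hol-ext} (where global BCH and the global biholomorphism $\exp:\g_\Comp\to G_\Comp$ for nilpotent groups do the work).
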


The above theorem says that $\A_\infty$ satisfies condition (A1) and condition (A3) in the Introduction for any choice of weight $W$. 
Unfortunately, we were unable to check any reasonable variant of condition (A2) for $\A_\infty$, which led us to a different subalgebra $\check{\mathcal{C}}$ satisfying conditions (A1), (A2') and (A3") in the Introduction. Note that (A2') and (A3") are ``partial" versions of (A2) and (A3), respectively.

\subsection{A modified analytic subalgebra}

Recall $\rho: G \to B(L^2(G))$, the right regular representation of $G$, given by
$$\rho(s)f(x) = f(xs),\;\; x\in G.$$
Recall also $\lambda|_H$, the left regular representation of $G$ restricted to the subgroup $H$.

\begin{defn}
Let $\widetilde{\Hi}^a_\infty$ be the Fr\'echet space given by
$$\widetilde{\Hi}^a_\infty := \{f\in \Hi^\infty_\rho: d\rho(X_{j_1}\cdots X_{j_n})f \in \Hi^a_{\lambda|_H,\infty}, \,n\in \n,\, 1\le j_1,\cdots,j_n\le d\}$$
equipped with the semi-norms
    $$E^H_{s,j_1,\cdots,j_n}(f) := E^{\lambda|_H}_s(d \rho(X_{j_1}\cdots X_{j_n})f),\;\;f\in \widetilde{\Hi}^a_\infty$$
for all $s>0$, $n\in \n$ and $1\le j_1,\cdots,j_n\le d$, where $E^{\lambda|_H}_s(\cdot)$ is the semi-norm from
\eqref{eq-seminorm-E}.
For the continuous linear map 
$$\Phi^H_\infty : L^2(G)\otimes_\pi \widetilde{\Hi}^a_\infty \to C_0(G),\;\; g\otimes f \mapsto g*\check{f}$$
we define the Fr\'echet space
    $$\B := {\rm Ran}(\Phi^H_\infty)$$
equipped with the quotient locally convex space structure.
\end{defn}

\begin{rem}\label{rem-Laplacian-continuous}

\begin{enumerate}
    \item From the definition it immediately follows that the operator $d\rho(X_{j_1}\cdots X_{j_n})$ is continuous in the space $\widetilde{\Hi}^a_\infty$ for any $n\ge 1$ and $1\le j_1,\cdots,j_n\le d$. 
    \item We do not know whether the space $\B$ is an algebra at the time of this writing.
\end{enumerate}

\end{rem}

We need another subspace of $\B$ as the ``push-forward" of a function algebra on $\g\cong \Real^d$ via the ``canonical coordinates map of the second kind" $\Psi$ given by
\begin{equation}\label{eq-coordinates-map}
\Psi: \Real^d \to G,\;\; (t_1,\cdots, t_d) \mapsto \exp(t_1X_1)\exp(t_2X_2)\cdots \exp(t_dX_d). 
\end{equation}
Recall that for a function $F$ on $\g\cong \Real^d$ its push-forward $\Psi_*(F)$ on $G$ is given by
    $$\Psi_*(F) := F\circ\Psi^{-1}.$$

\begin{rem}\label{rem-Phi-Haar}
The push-forward $\Psi_*(\cdot)$ can be easily extended to the case of measures and it is well-known that $\Psi_*(dm)$ is a left Haar measure on $G$ for the Lebesgue measure $dm$ on $\Real^d$.    
\end{rem}

\begin{defn}\label{def-C-check}
We define the space $\mathcal{C}$ by
    $$\mathcal{C} := \Psi_*[\Fc^{\Real^d}(C^\infty_c(\Real^k) \odot \mathcal{S}(\Real^{d-k}))],$$
where $C^\infty_c(\Real^k)$ and $\mathcal{S}(\Real^{d-k})$ refer to the space of test functions on $\Real^k$ and the Schwartz class on $\Real^{d-k}$, respectively,
and $\Fc^{\Real^d}$ is the Fourier transform on $\Real^d$.
The locally convex topological algebra structure of $\mathcal{C}$ comes from, through the maps $\Fc^{\Real^d}$ and $\Psi_*$, the algebraic tensor product $C^\infty_c(\Real^k) \odot \mathcal{S}(\Real^{d-k})$, which is an algebra with respect to convolution, so that $\mathcal{C}$ is an algebra with respect to pointwise multiplication.

We also consider the space $\check{\mathcal{C}} := \{\check{f}: f\in \mathcal{C}\}$ whose locally convex topological algebra  structure comes from $\mathcal{C}$ through the check map
$$J: \mathcal{C} \to \check{\mathcal{C}},\;\; f\mapsto \check{f},$$
which means that
the map $J$ is an isomorphism of locally convex topological algebras.
\end{defn}

Note that the algebra multiplication of $\check{\mathcal{C}}$ is still pointwise multiplication.
We would like to show that the algebra $\check{\mathcal{C}}$ is indeed a dense subspace in $\B$, which requires a better understanding of the semi-norm structures in $\widetilde{\Hi}^a_\infty$.
Let us begin with the description of the space of entire vectors $\Hi^a_{\pi,\infty}$ for a strongly continuous unitary representation $\pi: G \to B(\Hi_\pi)$
using its restrictions $\pi_j = \pi|_{G_j}$ to the one-parameter subgroups
$$G_j := \exp(\Real \cdot X_j) = \{\exp(tX_j): t\in \Real\},\;\; 1\le j\le d.$$

\begin{prop}\label{prop-entire-domain} Let $G$ be a connected, simply connected and nilpotent Lie group. Let $\pi$ and $\pi_j$ ($1\le j\le d$) be the representations as above.
Then, we have an isomorphism
    $$\Hi^a_{\pi,\infty} \cong \bigcap_{1\le j\le d}\Hi^a_{\pi_j, \infty}$$
as Fr\'{e}chet spaces.
\end{prop}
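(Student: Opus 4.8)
The plan is to prove the two set-theoretic inclusions separately and then upgrade the equality of underlying spaces to a topological isomorphism via the open mapping theorem for Fr\'echet spaces. First I would record that $\bigcap_{1\le j\le d}\Hi^a_{\pi_j,\infty}$ is genuinely a Fr\'echet space: it is a closed subspace of the countable product $\prod_{j}\Hi^a_{\pi_j,\infty}$ of Fr\'echet spaces, realized inside $\Hi$ through the continuous inclusions $\Hi^a_{\pi_j,\infty}\hookrightarrow\Hi$ as in Section \ref{sec-prelim}. The inclusion $\Hi^a_{\pi,\infty}\subseteq\bigcap_j\Hi^a_{\pi_j,\infty}$, together with continuity of the identity map, is immediate from a pointwise comparison of seminorms: since the Lie algebra of $G_j$ is spanned by the single vector $X_j$, the $n$-th seminorm for $\pi_j$ is $\rho^{\pi_j}_n(v)=\|d\pi(X_j)^nv\|$, which is one summand of $\rho^\pi_n(v)^2$, so $E^{\pi_j}_s(v)\le E^\pi_s(v)$ for every $s>0$ and $j$. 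Granting the reverse inclusion of underlying sets, the identity map $\Hi^a_{\pi,\infty}\to\bigcap_j\Hi^a_{\pi_j,\infty}$ is then a continuous linear bijection between Fr\'echet spaces, hence a topological isomorphism by the open mapping theorem; this is precisely why I need not produce explicit reverse seminorm estimates.

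The whole content therefore lies in the inclusion $\bigcap_j\Hi^a_{\pi_j,\infty}\subseteq\Hi^a_{\pi,\infty}$: a vector that is entire in each of the $d$ coordinate directions is jointly entire. My strategy is to show that for such a $v$ the orbit map
$$ t=(t_1,\dots,t_d)\longmapsto \pi(\Psi(t))v=\pi(\exp t_1X_1)\cdots\pi(\exp t_dX_d)v\in\Hi, $$
written in canonical coordinates of the second kind from \eqref{eq-coordinates-map}, extends to an entire $\Hi$-valued function $F$ on $\Comp^d$ (in particular its restriction to $\Real^d$ being the orbit map shows $v\in\Hi^\infty_\pi$). Once this is available, the mixed derivative $\partial^{\alpha_1}_{t_1}\cdots\partial^{\alpha_d}_{t_d}F(0)$ recovers the ordered product $d\pi(X_1)^{\alpha_1}\cdots d\pi(X_d)^{\alpha_d}v$, and the Cauchy estimates for $F$ on polydiscs bound these products by $\alpha!\,R^{-|\alpha|}\sup_{|z_i|=R}\|F(z)\|$ for every radius $R$. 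To pass from these ordered (PBW) products to the arbitrary products $d\pi(X_{j_1}\cdots X_{j_n})v$ entering $\rho^\pi_n(v)$, I would use that $G$ is nilpotent: in $U(\g_\Comp)$ every length-$n$ word in the generators rewrites as a combination of PBW monomials of degree $\le n$, and because the bracket of a Jordan--H\"older basis satisfies $[X_i,X_j]\in\Span\{X_l:l>\max(i,j)\}$ the reordering terminates with coefficients whose growth is controlled. Combining the Cauchy bounds with this reordering yields $\rho^\pi_n(v)<\infty$ and $E^\pi_s(v)<\infty$ for all $s>0$, i.e. $v\in\Hi^a_{\pi,\infty}$.

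Constructing the entire extension $F$ is the main obstacle, and it is where nilpotency is essential twice over. I would argue by induction on $d$ using the ideal $\g'=\Span\{X_2,\dots,X_d\}$ and the resulting semidirect decomposition $G=\exp(\Real X_1)\ltimes G'$ with $G'=\exp\g'$ normal; writing $\Psi(t)=\exp(t_1X_1)\,\Psi'(t_2,\dots,t_d)$ with $\Psi'$ the coordinate map of $G'$, the inductive hypothesis applied to $v$, which is entire in the directions $X_2,\dots,X_d$, extends $t'\mapsto\pi(\Psi'(t'))v$ to an entire function on $\Comp^{d-1}$. The remaining task is to incorporate the $X_1$-direction, i.e. to apply $e^{z_1\partial\pi(X_1)}$ holomorphically in $z_1$ to this extension. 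Here the terminating Baker--Campbell--Hausdorff formula makes $\exp(z_1X_1)\exp(z_2X_2)\cdots\exp(z_dX_d)=\exp(Z(z))$ with $Z$ a polynomial map $\Comp^d\to\g_\Comp$, and the adjoint action $\myop{Ad}(\exp z_1X_1)=e^{z_1\,\ad X_1}$ is polynomial in $z_1$ because $\ad X_1$ is nilpotent and preserves $\g'$; this polynomial intertwining is what lets one commute $e^{z_1\partial\pi(X_1)}$ past the $G'$-flow and verify that $\pi(\Psi'(z'))v$ stays entire in the $X_1$-direction with holomorphic dependence on $z'$. The analytic bookkeeping needed to keep the resulting series absolutely convergent at each inductive stage, controlling the interaction between one new direction and the already-extended block, is the genuinely technical step, and it rests on repeated application of Lemma \ref{lem-abs-conv} and Propositions \ref{prop-analytic evaluation-domain} and \ref{prop-holomorphic-ext}.
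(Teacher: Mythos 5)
Your overall architecture coincides with the paper's: the containment $\Hi^a_{\pi,\infty}\subseteq\bigcap_j\Hi^a_{\pi_j,\infty}$ with continuity follows from the termwise seminorm comparison $\rho^{\pi_j}_n(v)=\|d\pi(X_j)^nv\|\le\rho^\pi_n(v)$, and the topological isomorphism is then extracted from the open mapping theorem for Fr\'echet spaces exactly as in the paper. The divergence is in the hard inclusion $\bigcap_j\Hi^a_{\pi_j,\infty}\subseteq\Hi^a_{\pi,\infty}$: the paper does not prove this at all but cites it as \cite[Theorem~1.1]{Goodman_nilpotent}, whereas you attempt to reconstruct it. Your outline --- entire extension of the orbit map in canonical coordinates of the second kind, Cauchy estimates on polydiscs to bound the ordered monomials $d\pi(X_1)^{\alpha_1}\cdots d\pi(X_d)^{\alpha_d}v$, PBW reordering using the Jordan--H\"older structure, and induction on the flag with the terminating BCH formula and the polynomial adjoint action --- is indeed the route by which Goodman's theorem is proved, so you have correctly identified where nilpotency and simple connectedness enter.

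That said, if the intent was a self-contained argument rather than a citation, the proposal has a gap precisely at the two steps you flag as ``technical'': (i) the claim that rewriting an arbitrary length-$n$ word in PBW form produces coefficients ``whose growth is controlled'' is the combinatorial heart of the matter --- one must show the total mass of the reordering is at most $C^n$ (or at worst subfactorial) so that $\sum_n\frac{s^n}{n!}\rho^\pi_n(v)$ still converges for \emph{every} $s>0$, and this uses the strict lower-triangularity $[X_i,X_j]\in\Span\{X_l:l>\max(i,j)\}$ in an essential, quantitative way; and (ii) the inductive step that commutes $e^{z_1\partial\pi(X_1)}$ past the already-extended $G'$-block requires uniform absolute-convergence estimates that Lemma~\ref{lem-abs-conv} and Propositions~\ref{prop-analytic evaluation-domain}--\ref{prop-holomorphic-ext} do not by themselves supply, since those results presuppose membership in some $\Hi^a_{\pi,t}$ for the \emph{full} representation, which is exactly what is being proved. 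As written these are assertions, not proofs; either supply the estimates or, as the paper does, cite Goodman's theorem for the set-level identification and let the open mapping theorem do the rest.
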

\begin{proof}
Identification at the set level is immediate from \cite[Theorem 1.1]{Goodman_nilpotent}. For the isomorphism as Fr\'{e}chet spaces we can appeal to the open mapping theorem for Fr\'{e}chet spaces.
\end{proof}

We will use the above for the case $\pi = \lambda|_H$ on $H$. Note that $(\lambda|_H)_j = \lambda_j$ for $1\le j \le k$. For this range of $j$ we can appeal to the fact that $H$ is abelian to conclude that
$$\exp(tX_j)\Psi(t_1,\cdots, t_d) = \Psi(t_1,\cdots, t_j+t,\cdots, t_d),$$
where $\Psi$ is the coordinates map from \eqref{eq-coordinates-map}.
Thus, we have
\begin{equation}\label{eq-U_j-lambda}
    U_j(t)(F) = \lambda(\exp(tX_j))[F\circ \Psi^{-1}]\circ \Psi,\;\; f\in L^2(G),
\end{equation}
where $U_j$ is a unitary representation of $\Real$ acting on the Hilbert space $L^2(\Real^d)$ given by
    $$U_j(t)F(t_1,\cdots, t_d) = F(t_1,\cdots, t_j - t,\cdots, t_d),\;\; F\in L^2(\Real^d).$$
The identity \eqref{eq-U_j-lambda} tells us the following.

\begin{prop}\label{prop-lambda-U}
For $1\le j \le k$ we have an isomorphism
    $$\Hi^a_{U_j, \infty} \ni F \longmapsto F\circ \Psi^{-1} \in  \Hi^a_{\lambda_j, \infty}$$
as Fr\'{e}chet spaces.
\end{prop}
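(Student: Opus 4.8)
The plan is to realize the stated correspondence as the restriction of a single unitary operator that intertwines the one-parameter groups $U_j$ and $\lambda_j$, and then to transfer the entire-vector Fr\'echet structure across this unitary. First I would introduce the push-forward map
$$V: L^2(\Real^d) \to L^2(G), \qquad V(F) := F\circ \Psi^{-1} = \Psi_*(F).$$
By Remark \ref{rem-Phi-Haar} the measure $\Psi_*(dm)$ is a left Haar measure on $G$, so the change-of-variables formula gives $\|V(F)\|_{L^2(G)} = \|F\|_{L^2(\Real^d)}$ for every $F$; hence $V$ is a surjective isometry, i.e.\ a unitary operator with inverse $w\mapsto w\circ\Psi$.

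Next I would read off from \eqref{eq-U_j-lambda} that, for $1\le j\le k$, $V\bigl(U_j(t)F\bigr) = \lambda(\exp(tX_j))\,V(F)$ for all $t\in\Real$ and $F\in L^2(\Real^d)$; that is, $V$ conjugates the strongly continuous unitary one-parameter group $U_j$ onto $\lambda_j = \lambda|_{G_j}$. Differentiating the identity $t\mapsto V U_j(t)F = \lambda(\exp(tX_j))VF$ at $t=0$ (equivalently, invoking Stone's theorem) shows that $V$ carries $\Hi^\infty_{U_j}$ bijectively onto $\Hi^\infty_{\lambda_j}$ and intertwines the infinitesimal generators, so that $V\circ (dU_j)^n = d\lambda(X_j)^n\circ V$ on smooth vectors. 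Since both one-parameter groups have one-dimensional Lie algebra, the seminorms of \eqref{eq-seminorm-E} reduce to $\rho_n^{U_j}(F) = \|(dU_j)^n F\|$ and $\rho_n^{\lambda_j}(v) = \|d\lambda(X_j)^n v\|$, and the isometry of $V$ yields $\rho_n^{\lambda_j}(VF) = \rho_n^{U_j}(F)$ for every $n\ge 0$. Summing against $s^n/n!$ gives $E_s^{\lambda_j}(VF) = E_s^{U_j}(F)$ for all $s>0$.

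Consequently $V$ restricts to a linear bijection $\Hi^a_{U_j,\infty}\to\Hi^a_{\lambda_j,\infty}$ that preserves every seminorm defining the two Fr\'echet topologies, hence is an isomorphism of Fr\'echet spaces; its inverse is the restriction of $w\mapsto w\circ\Psi$, which transfers entire vectors back symmetrically. The step demanding the most care is the transfer at the level of generators and $C^\infty$-vectors---that $V$ maps $\Hi^\infty_{U_j}$ onto $\Hi^\infty_{\lambda_j}$ and intertwines all powers of the generators---but for unitarily equivalent strongly continuous one-parameter groups this is a standard consequence of Stone's theorem and presents no essential obstacle; the genuinely representation-specific input is only the explicit intertwining \eqref{eq-U_j-lambda}, which is already in hand.
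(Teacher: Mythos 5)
Your proposal is correct and is exactly the argument the paper intends: the paper states the proposition as an immediate consequence of \eqref{eq-U_j-lambda}, i.e.\ that the push-forward $F\mapsto F\circ\Psi^{-1}$ is a unitary (by Remark \ref{rem-Phi-Haar}) intertwining $U_j$ with $\lambda_j$, hence intertwines the generators and preserves all the seminorms $E_s$. You have merely written out the details the paper leaves implicit, so there is nothing to add.
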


We need an alternative description of the Fr\'{e}chet space structure of $\Hi^a_{U_j, \infty}$.
\begin{prop}\label{prop-U_j-another-description}
For $1\le j \le d$ we have the following.
\begin{enumerate}
    \item A function $F\in L^2(\Real^d)$ belongs to the space $\Hi^a_{U_j, \infty}$ if and only if $F$ extends entirely for the $j$-th variable and
\begin{equation}\label{eq-norm-U_j}
    \sigma_s(F) := \sup_{|\eta|\le s}\left(\int_{\Real^d}|F(t_1,\cdots, t_j+i\eta, \cdots, t_d)|^2\,dt_1\cdots\, dt_d\right)^{\frac{1}{2}}
\end{equation}
are finite for all $s>0$. Furthermore, the family of norms $\{\sigma_s\}_{s>0}$ gives the topology of $\Hi^a_{U_j, \infty}$.

\item The following map is an isomorphism of Fr\'{e}chet spaces.
$$\Hi^a_{U_j, \infty} \to \bigcap_{s>0}L^2(\Real^d; e^{2s|\xi_j|}),\;\; F\mapsto \widehat{F},$$
where $\widehat{F}$ is the Fourier transform of $F$ on $\Real^d$.
\end{enumerate}
\end{prop}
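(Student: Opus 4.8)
The plan is to pass to the Fourier transform on $\Real^d$, which diagonalizes the generator of $U_j$. Since $U_j$ is translation in the $j$-th variable, its infinitesimal generator acts as $\pm\partial_{t_j}$, so that $\rho_n(F) = \|\partial_{t_j}^n F\|_2$ and, by Plancherel, $\rho_n(F) = \||\xi_j|^n \widehat F\|_2$. Hence the entire-vector seminorm becomes $E_s(F) = \sum_{n\ge 0}\frac{s^n}{n!}\||\xi_j|^n\widehat F\|_2$, a sum of weighted $L^2$-norms of $\widehat F$. I would first prove part (2) directly from this identity, and then deduce part (1) by realizing the holomorphic extension through the inverse Fourier transform.

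For part (2), I would establish the two-sided comparison
\begin{equation*}
\|e^{s|\xi_j|}\widehat F\|_2 \le E_s(F) \le \frac{s'}{s'-s}\,\|e^{s'|\xi_j|}\widehat F\|_2,\qquad 0<s<s'.
\end{equation*}
The left inequality is Minkowski's integral inequality applied to the expansion $e^{s|\xi_j|} = \sum_n \tfrac{s^n|\xi_j|^n}{n!}$. The right inequality follows from the elementary bound $|\xi_j|^n\le \tfrac{n!}{(s')^n}e^{s'|\xi_j|}$, which gives $\||\xi_j|^n\widehat F\|_2\le \tfrac{n!}{(s')^n}\|e^{s'|\xi_j|}\widehat F\|_2$, followed by summing the geometric series $\sum_n (s/s')^n$. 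As $\Fc$ is a Plancherel isometry, these estimates show at once that $F\mapsto \widehat F$ is a bijection of $\Hi^a_{U_j,\infty}$ onto $\bigcap_{s>0}L^2(\Real^d; e^{2s|\xi_j|})$ that is bicontinuous, proving (2).

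For part (1), I would use the description of $\widehat F$ just obtained. Given $G := \widehat F\in\bigcap_s L^2(\Real^d;e^{2s|\xi_j|})$, for every $\eta\in\Real$ the function $e^{-\eta\xi_j}G$ lies in $L^2(\Real^d)$ with $\|e^{-\eta\xi_j}G\|_2\le \|e^{s|\xi_j|}G\|_2$ whenever $|\eta|\le s$; I would set $F_\eta := \Fc^{-1}[e^{-\eta\xi_j}G]$ and identify it with $(t_l)\mapsto F(t_1,\dots,t_j+i\eta,\dots,t_d)$. By Plancherel, $\|F_\eta\|_2^2 = \int e^{-2\eta\xi_j}|G|^2$, and since this is convex in $\eta$ its supremum over $|\eta|\le s$ is attained at an endpoint, giving
\begin{equation*}
\sigma_s(F)^2 = \max\left(\int e^{2s\xi_j}|G|^2,\ \int e^{-2s\xi_j}|G|^2\right),
\end{equation*}
whence $\tfrac{1}{\sqrt 2}\|e^{s|\xi_j|}G\|_2\le \sigma_s(F)\le \|e^{s|\xi_j|}G\|_2$. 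Combined with part (2), this shows that $\{\sigma_s\}_{s>0}$ and $\{E_s\}_{s>0}$ generate the same topology, which is the remaining assertion of (1).

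The main obstacle is the genuine holomorphy of $z_j\mapsto F(\dots,z_j,\dots)$: because $e^{s|\xi_j|}G$ is only square-integrable and need not be integrable, one cannot differentiate the inverse-Fourier integral under the integral sign, and one must also justify, in the converse direction (starting from an entire extension with finite $\sigma_s$), the contour-shift identity $\widehat{F_\eta} = e^{-\eta\xi_j}\widehat F$. I would resolve this by truncation: set $G_N := G\,\mathbf 1_{\{|\xi_j|\le N\}}$, for which $\Fc^{-1}G_N$ is a genuine entire function of the $j$-th variable of finite exponential type by the classical $L^2$ Paley--Wiener theorem, and show via the $\sigma_s$-estimate above that $\Fc^{-1}G_N$ converges to the candidate extension uniformly on compact subsets of each strip $\{\,|{\rm Im}\,z_j|\le s\,\}$; the limit is then holomorphic by Weierstrass' theorem, and Cauchy's theorem applied to the $\Fc^{-1}G_N$ yields the contour-shift identity in the limit.
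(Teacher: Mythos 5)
Your proposal is correct in substance, and it is genuinely more self-contained than the paper's proof, which disposes of part (1) by citing Goodman's Proposition 5.1 and Corollary 5.1 verbatim and of part (2) by the one-line observation that $\sigma_s(F)\cong\|\widehat F\|_{L^2(\Real^d;e^{2s|\xi_j|})}$ under Plancherel. The logical order is also reversed: the paper obtains (2) as a corollary of the $\sigma_s$-description in (1), whereas you prove (2) first, directly from the seminorms $E_s$, via the two-sided estimate $\|e^{s|\xi_j|}\widehat F\|_2\le E_s(F)\le \frac{s'}{s'-s}\|e^{s'|\xi_j|}\widehat F\|_2$, and only then pass to the holomorphic-extension picture. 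That estimate is a worthwhile addition — it makes quantitative the equivalence of the $E_s$-topology and the weighted-$L^2$ topology, which the paper leaves entirely implicit in the citation — and your convexity argument identifying $\sigma_s(F)^2$ with the maximum of the two endpoint integrals is clean and correct. The price you pay is having to reprove the Paley--Wiener theorem for the strip, which is exactly the content Goodman supplies. One small point of care there: your truncation $G_N=\widehat F\,\cdot\mathbf{1}_{\{|\xi_j|\le N\}}$ handles the \emph{forward} direction (from exponential $L^2$-decay of $\widehat F$ to an entire extension with finite $\sigma_s$), but in the \emph{converse} direction you do not yet know that $\widehat F$ decays, so you cannot truncate it; instead one must run Cauchy's theorem on rectangles $[-R,R]\times[0,\eta]$ in the $j$-th variable and kill the vertical sides along a subsequence $R_n\to\infty$, using Fubini and the finiteness of $\int_0^\eta\|F(\cdot+i\tau e_j)\|_2^2\,d\tau$ to find such a subsequence. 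This is standard and does not threaten the argument, but as written the converse step is slightly conflated with the forward one.
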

\begin{proof}
(1) We repeat the same argument of \cite[Proposition 5.1, Corollary 5.1]{Goodman_entire}.

\vspace{0.5cm}

(2) We only need to observe the equivalence
$$\sigma_s(F) \cong \|\widehat{F}\|_{L^2(\Real^d; e^{2s|\xi_j|})},\;\; F\in \Hi^a_{U_j, \infty}$$
upto a universal constant by applying the Fourier transform on $\Real^d$.
\end{proof}

We need yet another characterization of the functions in $\Hi^a_{\lambda|_H, \infty}$ using the {\em group Fourier transform}. Recall that $\widehat{G}$ is the unitary dual of $G$ so that for each $\xi \in \widehat{G}$ there is an associated irreducible unitary representation 
$$\pi^\xi: G\to B(\Hi_\xi).$$ For $f\in L^1(G)$ and $\xi \in \widehat{G}$ we recall
    $$\hat{f}(\xi) := \int_G f(x)\pi^\xi(x)\,dx \in B(\Hi_\xi).$$
The Plancherel theorem says that there is a unique Borel measure $\mu$ (called the Plancherel measure) on $\widehat{G}$ such that
$$\|f\|^2_{L^2(G)} = \int_{\widehat{G}}\|\hat{f}(\xi)\|^2_2\,d\mu(\xi),\;\; f\in L^1(G)\cap L^2(G),$$
where $\|\cdot\|_2$ denotes the Hilbert Schmidt norm. This identity allows us to define $\hat{f}(\xi)$ for $f\in L^2(G)$ and $\xi\in \widehat{G}$ by extension.

A strongly continuous unitary representation $\pi: G\to B(\Hi)$ can be extended to a holomorphic representation of $G_\Comp$ as follows. For $Z\in \g_\Comp$ and $v\in \Hi_{\pi, \infty}$ we define
$$\pi_\Comp(\exp(Z))v := e^{d\pi(Z)}v,$$
where the latter is from
\eqref{eq-rep-comp}.

\begin{thm}\label{thm-integral-criterion}
A function $f\in L^2(G)$ is an entire vector for $\lambda|_H$ if and only if the Fourier transform $\hat{f}$ satisfies
\begin{equation*}
{\rm Ran}(\hat{f}(\xi))\subseteq \Hi^a_{\pi^\xi|_H,\infty} \;\;\text{$\mu$-almost every $\xi$ and}
\end{equation*}

\begin{equation}
\int_{\widehat{G}}\sup_{\gamma \in \{\exp(X)\,:\, X\in \mathfrak{h}_\Comp,\, |X|<t\} }\|\pi^\xi_\Comp(\gamma)\hat{f}(\xi)\|^2_2\,d\mu(\xi)<\infty
\end{equation}
for all $t>0$,
where $\mu$ is the Plancherel measure on $\widehat{G}$.
\end{thm}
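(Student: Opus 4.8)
The plan is to transport the entire-vector condition for $\lambda|_H$ to the Plancherel decomposition of $L^2(G)$ and reduce it there to a fibrewise statement, using crucially that the infinitesimal generators of $\lambda|_H$ strongly commute because $H$ is abelian. Write $A_j:=i\partial\lambda(X_j)$ and $A^\xi_j:=i\partial\pi^\xi(X_j)$ for $1\le j\le k$; these are self-adjoint, and strong commutativity of the families $\{A_j\}$ and $\{A^\xi_j\}$ furnishes joint spectral measures $E$ and $E^\xi$ on $\Real^k$. The Plancherel transform $f\mapsto\hat f$ is a unitary $L^2(G)\cong\int^\oplus_{\widehat G}\mathcal{HS}(\Hi_\xi)\,d\mu(\xi)$ that intertwines $\lambda(y)$ with left multiplication by $\pi^\xi(y)$, i.e.\ with $\pi^\xi(y)\otimes I$ on $\Hi_\xi\otimes\overline{\Hi_\xi}$. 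Applying Stone's theorem to $t\mapsto\lambda(\exp tX_j)$ shows that $A_j$ is the decomposable operator $\int^\oplus(A^\xi_j\otimes I)\,d\mu$, so that the joint spectral measure decomposes as $E(B)=\int^\oplus(E^\xi(B)\otimes I)\,d\mu$. For a fixed $f$ the scalar measure $E_f(B):=\la E(B)f,f\ra$ therefore satisfies $E_f=\int_{\widehat G}\nu_\xi\,d\mu$, where $\nu_\xi(B):=\la(E^\xi(B)\otimes I)\hat f(\xi),\hat f(\xi)\ra_{\mathcal{HS}}$. Finally, since the $\partial\pi^\xi(X)$, $X\in\h$, strongly commute, $\pi^\xi_\Comp(\exp X)=\pi^\xi(\exp\mathrm{Re}\,X)\,e^{i\partial\pi^\xi(\mathrm{Im}\,X)}$ is a unitary times a commuting positive operator, so the integrand of the theorem is
\[
M_t(\xi)^2:=\sup_{|X|<t}\|\pi^\xi_\Comp(\exp X)\hat f(\xi)\|_2^2=\sup_{|Y|<t}\int_{\Real^k}e^{2\la Y,u\ra}\,d\nu_\xi(u),\qquad Y\in\h\cong\Real^k .
\]

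Next I would record the spectral form of the entire-vector condition. Because $\rho_n$ is built from the $L^2$ norm on $\g$, the joint functional calculus gives $\rho_n(f)^2=\sum_{1\le j_1,\dots,j_n\le k}\|A_{j_1}\cdots A_{j_n}f\|^2=\int_{\Real^k}|u|^{2n}\,dE_f(u)$, with $|u|^2=\sum_{j\le k}u_j^2$. Comparing the defining series $E_s(f)=\sum_n\tfrac{s^n}{n!}\rho_n(f)$ with the exponential moment $\int e^{2t|u|}dE_f$—via $|u|^{2n}\le\tfrac{(2n)!}{(2t)^{2n}}e^{2t|u|}$ together with $\sqrt{(2n)!}\le 2^n n!$ in one direction, and Cauchy--Schwarz on the moments in the other—yields
\[
f\in\Hi^a_{\lambda|_H,\infty}\ \Longleftrightarrow\ \int_{\Real^k}e^{2t|u|}\,dE_f(u)<\infty\quad\text{for all }t>0 .
\]
The identical computation on each fibre shows that $M_t(\xi)<\infty$ for all $t$ iff $\int e^{2t|u|}d\nu_\xi<\infty$ for all $t$, and the latter forces $\mathrm{Ran}(\hat f(\xi))\subseteq\Hi^a_{\pi^\xi|_H,\infty}$ (indeed finiteness makes $e^{t|A^\xi|}\hat f(\xi)$ Hilbert--Schmidt, hence bounded, so $e^{t|A^\xi|}$ is defined on the range); conversely the range condition is exactly what makes $\pi^\xi_\Comp(\exp X)\hat f(\xi)$ well defined for all $X\in\h_\Comp$, so that $M_t(\xi)$ is a meaningful quantity.

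The core is then to compare $\int_{\widehat G}M_t(\xi)^2\,d\mu$ with $\int e^{2t|u|}dE_f$. One inequality is free: $\sup_{|Y|<t}\int e^{2\la Y,u\ra}d\nu_\xi\le\int\sup_{|Y|<t}e^{2\la Y,u\ra}d\nu_\xi=\int e^{2t|u|}d\nu_\xi$, so integrating in $\xi$ and using $E_f=\int\nu_\xi\,d\mu$ gives $\int_{\widehat G}M_t(\xi)^2d\mu\le\int e^{2t|u|}dE_f$. For the reverse, fix $t'<t$ and choose a finite net $Y_1,\dots,Y_N$ on the sphere $|Y|=t$ so fine that $\max_i\la Y_i,u\ra\ge t'|u|$ for every $u\in\Real^k$; then $e^{2t'|u|}\le\sum_{i=1}^N e^{2\la Y_i,u\ra}$ pointwise, whence $\int e^{2t'|u|}d\nu_\xi\le N\,M_t(\xi)^2$ uniformly in $\xi$, and integrating gives $\int e^{2t'|u|}dE_f\le N\int_{\widehat G}M_t(\xi)^2d\mu$. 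Combining the two bounds shows that $\int_{\widehat G}M_t(\xi)^2d\mu<\infty$ for all $t$ if and only if $\int e^{2t|u|}dE_f<\infty$ for all $t$, i.e.\ iff $f$ is entire. When $f$ is entire, finiteness of the double integral and Tonelli force $\int e^{2t|u|}d\nu_\xi<\infty$ for a.e.\ $\xi$ (all $t$ along a sequence), giving the range condition; conversely the two stated hypotheses feed the same chain of inequalities to produce $\int e^{2t'|u|}dE_f<\infty$ for every $t'$, hence $f$ entire.

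The step I expect to be most delicate is the direct-integral decomposition $E(B)=\int^\oplus(E^\xi(B)\otimes I)\,d\mu$ of the joint spectral measure of the strongly commuting family $\{A_j\}$: it rests on the Plancherel theory for the type~I nilpotent group $G$ together with the measurable-field formalism, and one must verify that $\xi\mapsto E^\xi$ is a measurable field and that $\xi\mapsto M_t(\xi)$ is measurable (take the supremum over a countable dense set of $Y$, using continuity of $Y\mapsto\|e^{i\partial\pi^\xi(Y)}\hat f(\xi)\|_2$). The uniform covering estimate and the accompanying Tonelli interchanges, as well as the identity $\int g\,dE_f=\int_{\widehat G}\int g\,d\nu_\xi\,d\mu$ for nonnegative $g$, are then elementary once this decomposition is in place.
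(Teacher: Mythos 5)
Your proof is correct, but it takes a genuinely different route from the paper: the paper's entire proof is a one-line appeal to Goodman's Lemmas 3.1 and 3.2 on analytic and entire vectors in direct integrals, applied to $\lambda|_H \cong \int^\oplus_{\widehat{G}}\pi^\xi|_H\,d\mu(\xi)$, whereas you reprove the criterion from scratch by exploiting the abelianness of $H$. Concretely, you replace Goodman's seminorm-by-seminorm analysis with the joint spectral measure of the strongly commuting family $i\partial\lambda(X_j)$, convert the entire-vector condition into finiteness of the exponential moments $\int e^{2t|u|}\,dE_f(u)$ (the moment comparison via $|u|^{2n}\le (2n)!(2t)^{-2n}e^{2t|u|}$ and Cauchy--Schwarz is sound, as is the domination $E^\xi_{\hat f(\xi)w}\le \|w\|^2\nu_\xi$ that yields the range condition), and then pass between the radial exponential and the supremum of directional exponentials by a finite covering of the sphere. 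What your approach buys is a self-contained and quite transparent argument; what it costs is (i) reliance on $\mathfrak{h}$ being abelian, so it does not recover Goodman's lemmas in their general form, and (ii) the decomposability of the joint spectral measure, $E(B)=\int^\oplus (E^\xi(B)\otimes I)\,d\mu$, which you correctly flag as the delicate point --- it is standard direct-integral theory (Stone's theorem applied fibrewise to the decomposable unitary groups $\lambda(\exp tX_j)$), but it is exactly the kind of bookkeeping Goodman's lemmas package up. Two small repairs: the finite net should be taken on a sphere of radius $t''$ with $t'<t''<t$, not on $|Y|=t$, since the supremum defining $M_t(\xi)$ ranges over the \emph{open} ball $|X|<t$; and measurability of $\xi\mapsto M_t(\xi)$ should indeed be secured by taking the supremum over a countable dense subset of $\{Y\in\mathfrak{h}: |Y|<t\}$ before invoking Tonelli, as you indicate.
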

\begin{proof}
We use \cite[Lemma 3.1 and 3.2]{Goodman_nilpotent} for $\displaystyle \lambda|_H \cong \int^\oplus_{\widehat{G}} \pi^\xi|_H \,d\mu(\xi)$.
\end{proof}

Now we are ready to prove the main technical result in this section.

\begin{thm}\label{thm-B-embeds-into-A}
\begin{enumerate}
\item The algebra $\mathcal{C}$ is continuously embedded in $\widetilde{\Hi}^a_\infty$ with a dense image.

\item The algebra $\check{\mathcal{C}}$ embeds continuously in $\B$ with a dense image.
\end{enumerate}
\end{thm}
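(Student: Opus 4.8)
The plan is to transport everything to $\Real^d$ through the coordinate map $\Psi$ from \eqref{eq-coordinates-map} and to read off both topologies there. Writing $F:=f\circ\Psi$ for $f\in\widetilde{\Hi}^a_\infty$, the identity \eqref{eq-U_j-lambda} shows that for $1\le i\le k$ the left derivative $d\lambda(X_i)$ becomes the coordinate derivative $\partial_{t_i}$ (up to sign), while the nilpotency of $G$ together with the coordinates of the second kind makes each right derivative $d\rho(X_j)$ a first-order differential operator $D_j$ on $\Real^d$ with polynomial coefficients; hence a word $d\rho(X_{j_1}\cdots X_{j_n})$ becomes a polynomial-coefficient operator $D_{j_1\cdots j_n}$. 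Combining Propositions \ref{prop-entire-domain}, \ref{prop-lambda-U} and \ref{prop-U_j-another-description}, membership $g\in\Hi^a_{\lambda|_H,\infty}$ is equivalent to $g\circ\Psi$ being entire in the first $k$ variables with all the norms $\sigma_s$ from \eqref{eq-norm-U_j} finite. Consequently the seminorms $E^H_{s,j_1,\cdots,j_n}$ defining $\widetilde{\Hi}^a_\infty$ are, after this transport, comparable to
\[
\sum_{m\ge 0}\frac{s^m}{m!}\Big(\sum_{1\le i_1,\cdots,i_m\le k}\big\|\partial_{t_{i_1}}\!\cdots\partial_{t_{i_m}}D_{j_1\cdots j_n}F\big\|_{L^2(\Real^d)}^2\Big)^{1/2},
\]
that is, they control, for every word and every $s>0$, the $e^{s(|t_1|+\cdots+|t_k|)}$-weighted $L^2$-size of the analytic continuation of $D_{j_1\cdots j_n}F$ in the first $k$ variables.

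For part (1) I would first prove the continuous inclusion. For $u=\Psi_*(\Fc^{\Real^d}\psi)$ with $\psi\in C^\infty_c(\Real^k)\odot\mathcal{S}(\Real^{d-k})$ the function $F=\Fc^{\Real^d}\psi$ is Schwartz and, because $\psi$ has compact support in its first $k$ variables, the Paley--Wiener theorem makes $F$ entire in $t_1,\cdots,t_k$ with $\sigma_s(F)<\infty$ for every $s>0$. Applying a polynomial-coefficient operator $D_{j_1\cdots j_n}$ preserves both the Schwartz decay and this entire continuation, since polynomials are entire of controlled growth, so $u\in\widetilde{\Hi}^a_\infty$; tracking the Paley--Wiener constants against the support radius bounds each $E^H_{s,j_1,\cdots,j_n}(u)$ by finitely many Schwartz/support seminorms of $\psi$, which is the asserted continuity.

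The density in part (1) is where the real work lies. Passing to $\widehat F=\psi$ and using Proposition \ref{prop-U_j-another-description}(2), the transported topology of $\widetilde{\Hi}^a_\infty$ is a weighted Schwartz topology in which the first $k$ frequency variables $\xi'=(\xi_1,\cdots,\xi_k)$ carry the exponential weights $e^{s|\xi'|}$ for all $s>0$. Given such a $\psi$, I would first cut off in $\xi'$ by multiplying with $\chi(\varepsilon\xi')$, where $\chi\in C^\infty_c(\Real^k)$ and $\chi\equiv 1$ near $0$; since every exponential moment $\|e^{s|\xi'|}\psi\|_2$ is finite, the tails force $\chi(\varepsilon\xi')\psi\to\psi$ in each seminorm as $\varepsilon\to 0$, the extra terms produced by the frequency derivatives hidden in the transforms of $D_{j_1\cdots j_n}$ being $O(\varepsilon)$ and supported in an annulus. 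The cut-off functions already have compact support in $\xi'$ and Schwartz decay in the remaining variables, and a standard algebraic-tensor density argument (using nuclearity of the spaces involved) approximates them by elements of $C^\infty_c(\Real^k)\odot\mathcal{S}(\Real^{d-k})$. The main obstacle here is to run these two approximations simultaneously over the full family of seminorms, keeping the exponential weights, the polynomial coefficients and the frequency derivatives under joint control.

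For part (2) I would reduce to part (1). The density of $\check{\mathcal{C}}$ in $\B$ follows once $\mathcal{C}$ is dense in $\widetilde{\Hi}^a_\infty$ (part (1)) and $C_c(G)$ is dense in $L^2(G)$, together with the continuity of $\Phi^H_\infty$: these approximate a general generator $g*\check f$ of $\B$ by expressions $g'*\check u$ with $u\in\mathcal{C}$. The genuinely delicate point is the continuous containment $\check{\mathcal{C}}\subseteq\B$, i.e. realizing each $\check u$ with $u\in\mathcal{C}$ in the convolution form $g*\check f$ with $g\in L^2(G)$ and $f\in\widetilde{\Hi}^a_\infty$; equivalently, factoring $u=f*\check g$ on $G$ with an entire-for-$\lambda|_H$ left factor $f$. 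I would produce this factorization on the group Fourier transform side: by Theorem \ref{thm-integral-criterion} the entire condition is a statement about $\widehat u(\xi)$, and a polar/functional-calculus splitting $\widehat u(\xi)=\widehat f(\xi)\,\widehat g(\xi)$ keeps ${\rm Ran}\,\widehat f(\xi)$ inside the entire vectors while leaving a square-integrable factor $\widehat g$. Verifying that this splitting respects the integrability in Theorem \ref{thm-integral-criterion}, so that indeed $f\in\widetilde{\Hi}^a_\infty$, and that it yields the required seminorm estimates, is the principal obstacle of the whole statement.
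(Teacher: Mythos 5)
Your part (1) follows the paper's route (transport by $\Psi$, Propositions \ref{prop-entire-domain}, \ref{prop-lambda-U} and \ref{prop-U_j-another-description}, polynomial-coefficient operators $D_{j_1,\cdots,j_n}$, Paley--Wiener, cut-off in the first $k$ frequency variables) and is essentially correct. The problems are in part (2), where both halves have genuine gaps. For the density, approximating a generator $g*\check f$ by $g'*\check u$ with $g'\in C_c(G)$ and $u\in\mathcal{C}$ does not finish the argument: $g'*\check u$ is a group convolution and is \emph{not} an element of $\check{\mathcal{C}}$ (group convolution does not correspond to Euclidean convolution under $\Psi$ outside the abelian directions), so what you have actually shown is only that $\Phi^H_\infty(L^2(G)\odot\mathcal{C})$ is dense in $\B$. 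The paper instead introduces the intermediate space $[\widetilde{\Hi}^a_\infty]^\lor$, proves $\Phi^H_\infty(\mathcal{C}\odot\widetilde{\Hi}^a_\infty)\subseteq[\widetilde{\Hi}^a_\infty]^\lor$ (this is where Theorem \ref{thm-integral-criterion} enters, to see that $g*\check f\in\Hi^a_{\lambda|_H,\infty}$ when $f\in\mathcal{C}$ and $g\in\widetilde{\Hi}^a_\infty$, via the uniform boundedness of $\widehat{\check f}(\pi)$), and then combines the density of $[\widetilde{\Hi}^a_\infty]^\lor$ in $\B$ with the density of $\check{\mathcal{C}}$ in $[\widetilde{\Hi}^a_\infty]^\lor$ supplied by part (1). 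That transfer in turn requires the continuous embedding $[\widetilde{\Hi}^a_\infty]^\lor\hookrightarrow\B$, which brings us to the second gap.

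For the continuous containment you correctly isolate the factorization $\check u=g*\check f$ as the crux, but the polar/functional-calculus splitting $\widehat u(\xi)=\widehat f(\xi)\,\widehat g(\xi)$ you propose is left unproved and is unlikely to work as stated: taking, say, $\widehat f(\xi)=|\widehat u(\xi)|^{1/2}$ gives a square-integrable field but destroys any control on ${\rm Ran}\,\widehat f(\xi)$, so the entire-vector and integrability conditions of Theorem \ref{thm-integral-criterion} are not preserved; and even if they were, you would still need every right derivative $d\rho(X_{j_1}\cdots X_{j_n})f$ to be entire for $\lambda|_H$, which a splitting of the single operator field $\widehat u(\xi)$ does not see. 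The paper sidesteps all of this with a Sobolev parametrix: for an integer $m>d(G)/4$ one has $f=E_m*(1-\Delta)^m f=E_m*((1-\Om)^m\check f)^\lor$ with a fixed kernel $E_m\in L^2(G)$ (from \cite[(3.8) and Lemma 3.3]{LT06}), and since $(1-\Om)^m$ acts continuously on $\widetilde{\Hi}^a_\infty$ this realizes every element of $[\widetilde{\Hi}^a_\infty]^\lor$ --- in particular every $\check u$ with $u\in\mathcal{C}$ --- inside $\B$ with the required seminorm estimates. You would need to replace the Fourier-side splitting by this (or an equivalent) explicit factorization for the argument to close.
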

\begin{proof}
(1) Applying Proposition \ref{prop-entire-domain}, \ref{prop-lambda-U} and \ref{prop-U_j-another-description} we know that the topology of $\widetilde{\Hi}^a_\infty$ is described by the semi-norms
$$\sigma_{s,j_1,\cdots,j_n}(F\circ\Psi^{-1})
:= \|\Fc^{\Real^d}(d \rho(X_{j_1}\cdots X_{j_n})[F\circ\Psi^{-1}]\circ \Psi)\|_{L^2(\Real^d; e^{2s|\xi_j|})}$$
for all $s>0$, $n\in \n$, $1\le j_1,\cdots,j_n\le d$ and $1\le j\le k$.
Note that (see \cite[Appendix A.2]{CG90}, for example)
$$ \sigma_{s,j_1,\cdots,j_n}(F\circ\Psi^{-1}) = \|D_{j_1,\cdots,j_n}\widehat{F}\|_{L^2(\Real^d; e^{2s|\xi_j|})}$$
for some partial differential operator $D_{j_1,\cdots,j_n}$ on $\Real^d$ with polynomial coefficients. Thus, we can immediately see that $\mathcal{C}$ is continuously embedded in $\widetilde{\Hi}^a_\infty$.
Moreover, a standard test function approximation argument (for example \cite[Theorem 3.3]{HH08}) tells us that $\Fc^{\Real^d}(C^\infty_c(\Real^k) \odot C^\infty_c(\Real^{d-k}))$, and consequently
$\mathcal{C}$, are dense in $\widetilde{\Hi}^a_\infty$.

\vspace{0.5cm}

(2) Let us define the space $[\widetilde{\Hi}^a_\infty]^\lor$ as in the case of $\check{\mathcal{C}}$ in Definition \ref{def-C-check} so that $\check{\mathcal{C}}$ is continuously embedded in $[\widetilde{\Hi}^a_\infty]^\lor$ with a dense image.
For each $f\in [\widetilde{\Hi}^a_\infty]^\lor$ and integer $m>d(G)/4$, where $d(G)$ is the real dimension of the Lie group $G$, we have
    $$f = E_m * (1-\Delta)^m f = E_m * ((1-\Om)^m \check{f})^\lor$$
for some $E_m \in L^2(G)$ by \cite[(3.8) and Lemma 3.3]{LT06}. Here,
$$\Delta = d\lambda(X_1)^2 + \cdots + d\lambda(X_d)^2 \;\;\text{and}\;\; \Om = d\rho(X_1)^2 + \cdots + d\rho(X_d)^2$$
is the right and left invariant Laplacians, respectively, and we used the fact that
$$d\rho(X)\check{f} = [d\lambda(X)f]^\lor,\;\; X\in \g.$$
Remark \ref{rem-Laplacian-continuous} (2) tells us that the operator $(1-\Om)^m$ is continuous on $\widetilde{\Hi}^a_\infty$ so that $[\widetilde{\Hi}^a_\infty]^\lor$ (and consequently $\check{\mathcal{C}}$) continuously embeds in $\B$.

The $L^p(G)$ $(1\le p<\infty)$ norm on the algebra $\mathcal{C}$ coincides with the $L^p(\Real^d)$ norm on the space $\mathcal{C}$ via the map $\Phi$ (see Remark \ref{rem-Phi-Haar}),
which means that $\mathcal{C}\subseteq L^1(G)$.
Note that $f\in \mathcal{C}$ implies $\bar{f}\in \mathcal{C}$ and, hence, the operator norm of $\widehat{\check{f}}(\pi) = [\widehat{\bar{f}}(\pi)]^*$ is uniformly bounded for each irreducible representation $\pi\in \widehat{G}$.
Thus, Theorem \ref{thm-integral-criterion} tells us that $g*\check{f}\in \Hi^a_{\lambda|_H,\infty}$ for $g \in \widetilde{\Hi}^a_\infty$. For any $X\in \g$ we can easily see that
    $$d\rho(X)[g*\check{f}] = g*[d\lambda(X)f]^\lor.$$
Since $\mathcal{C}$ is clearly closed under the operator $d\lambda(X)$ for any $X\in \g$ we can conclude that $d \rho(X_{j_1}\cdots X_{j_n}) (g *\check{f} ) \in \Hi^a_{\lambda|_H,\infty}$ for any $n\in \n$ and $1\le j_1,\cdots,j_n\le d$,
which explains $g*\check{f} \in \widetilde{\Hi}^a_\infty$.
Thus, we have 
$$\Phi^H _\infty (\mathcal{C} \odot \widetilde{\Hi}^a_\infty) = [\Phi^H _\infty (\widetilde{\Hi}^a_\infty\odot \mathcal{C})]^\lor \subseteq [\widetilde{\Hi}^a_\infty]^\lor,$$
which, together with the density of $\mathcal{C}$ in $L^2(G)$, explains the density of $[\widetilde{\Hi}^a_\infty]^\lor$, and consequently, the density of $\check{\mathcal{C}}$ in $\B$.
\end{proof}

We collect more properties of the spaces $\widetilde{\Hi}^a_\infty$, $\B$ and $\mathcal{C}$.

\begin{prop}\label{prop-properties-of-tilde-domain}
\begin{enumerate}
\item For any $\beta>0$ we have 
$$\widetilde{\Hi}^a_\infty \subseteq \Hi^a_{\lambda|_H,\infty}\subseteq \D(W_\beta).$$

\item For any $Z\in \mathfrak{h}$ we have
    $$e^{d\lambda(Z)}\widetilde{\Hi}^a_\infty \subseteq \widetilde{\Hi}^a_\infty.$$

\item For any $\beta>0$ the spaces $\widetilde{\Hi}^a_\infty$ and $W_\beta(\widetilde{\Hi}^a_\infty)$ are dense in $L^2(G)$.

\item The space $\B$ is continuously embedded in $A(G,W)$ with a dense image for any weight $W = \iota(W_H)$ extended from a weight $W_H$ on the dual of $H$ as in Section \ref{subsec-weights}.
\end{enumerate}
\begin{proof}
(1) For the inclusion $\Hi^a_{\lambda|_H,\infty}\subseteq \D(W_\beta)$ the same approach as in Proposition \ref{prop-analytic} works.

\vspace{0.5cm}

(2) We use Proposition \ref{prop-holomorphic-ext} to see that $e^{d\lambda(Z)}\Hi^a_{\lambda|_H,\infty} \subseteq \Hi^a_{\lambda|_H,\infty}$ for $Z\in \mathfrak{h}$. Once we observe that the operators $d\rho(X_{j_1}\cdots X_{j_n})$ and $d\lambda(Z)$ commute for any $n\ge 1$ and $1\le j_1,\cdots,j_n\le d$ we get the desired conclusion.

\vspace{0.5cm}

(3) The density of $\widetilde{\Hi}^a_\infty$ in $L^2(G)$ is clear as it contains $\mathcal{C}$ as we have seen in Theorem \ref{thm-B-embeds-into-A} (1). The second density follows from the same proof as in Theorem \ref{thm-density-domain}.

\vspace{0.5cm}

(4) We can repeat the same argument in the proof of Corollary \ref{cor-density-algebra} for the weight $W_\beta$. For a general weight $W$ we can appeal to \eqref{eq-domination} and Proposition \ref{prop-weight-comparison}.

\end{proof}

\end{prop}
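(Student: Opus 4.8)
The plan is to dispatch the four items in sequence, each reducing to machinery already in place. For (1), the inclusion $\widetilde{\Hi}^a_\infty\subseteq\Hi^a_{\lambda|_H,\infty}$ is built into the definition of $\widetilde{\Hi}^a_\infty$ (take the empty string of derivatives, $n=0$). For $\Hi^a_{\lambda|_H,\infty}\subseteq\D(W_\beta)$ I would reuse the estimate from Proposition~\ref{prop-analytic}: since $W_\beta=e^{\beta(|\partial\lambda(X_1)|+\cdots+|\partial\lambda(X_k)|)}$ is built only from the generators spanning $\mathfrak{h}$, for $v\in\Hi^a_{\lambda|_H,\infty}$ one has $\|[\beta(|\partial\lambda(X_1)|+\cdots+|\partial\lambda(X_k)|)]^n v\|\le\beta^n k^{n/2}\rho^{\lambda|_H}_n(v)$, whence $\sum_n \|[\,\cdots]^n v\|/n!\le E^{\lambda|_H}_{\beta\sqrt k}(v)<\infty$ precisely because $v$ is entire for $\lambda|_H$; Lemma~\ref{lem-abs-conv} then places $v$ in $\D(W_\beta)$.

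For (2), the point is that the right-invariant operators $d\rho(X_{j_1}\cdots X_{j_n})$ commute with each unitary $e^{d\lambda(Z)}=\lambda(\exp Z)$, $Z\in\mathfrak{h}$, because left and right translations commute. Proposition~\ref{prop-holomorphic-ext} applied to $\lambda|_H$ gives $e^{d\lambda(Z)}\Hi^a_{\lambda|_H,\infty}\subseteq\Hi^a_{\lambda|_H,\infty}$, so for $f\in\widetilde{\Hi}^a_\infty$ one obtains $d\rho(X_{j_1}\cdots X_{j_n})e^{d\lambda(Z)}f=e^{d\lambda(Z)}d\rho(X_{j_1}\cdots X_{j_n})f\in\Hi^a_{\lambda|_H,\infty}$, which is exactly the membership condition for $e^{d\lambda(Z)}f\in\widetilde{\Hi}^a_\infty$.

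For (3), density of $\widetilde{\Hi}^a_\infty$ in $L^2(G)$ is immediate: Theorem~\ref{thm-B-embeds-into-A}(1) gives $\mathcal{C}\subseteq\widetilde{\Hi}^a_\infty$, and $\mathcal{C}$ is dense in $L^2(G)$ since $\Psi_*$ intertwines the Haar measures, $\Fc^{\Real^d}$ is unitary, and $C^\infty_c(\Real^k)\odot\mathcal{S}(\Real^{d-k})$ is dense in $L^2(\Real^d)$. For density of $W_\beta(\widetilde{\Hi}^a_\infty)$ I would rerun the argument of Theorem~\ref{thm-density-domain} with $\Hi^a_R$ replaced by $\widetilde{\Hi}^a_\infty$: take $w\perp W_\beta(\widetilde{\Hi}^a_\infty)$; by (2) the unitaries $e^{t_1\partial\lambda(X_1)+\cdots+t_k\partial\lambda(X_k)}$ (generator in $\mathfrak{h}$) preserve $\widetilde{\Hi}^a_\infty$, so $\langle W_\beta e^{\sum t_j\partial\lambda(X_j)}v,w\rangle=0$ for \emph{all} real $t_j$ and all $v\in\widetilde{\Hi}^a_\infty$ — here the flow-invariance holds globally, so the real-analyticity step of Theorem~\ref{thm-density-domain} is not even needed. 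Transposing the unitary, averaging against the Gaussians $(\gamma/\pi)^{k/2}e^{-\gamma\sum t_j^2}$ to form $\xi_\gamma$, invoking Lemma~\ref{lem-integral-domain} to get $\xi_\gamma\in\D(W_\beta)$, and using density of $\widetilde{\Hi}^a_\infty$ together with invertibility of $W_\beta$ forces $\xi_\gamma=0$ and hence $w=\lim_{\gamma\to\infty}\xi_\gamma=0$.

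Finally, (4) follows the template of Corollary~\ref{cor-density-algebra}. Continuity into $A(G,W_\beta)$ comes from Remark~\ref{rem-weighted-Fourier-element} and (1): for $g*\check f\in\B$ with $f\in\widetilde{\Hi}^a_\infty$, $\|g*\check f\|_{A(G,W_\beta)}\le\|g\|_2\|W_\beta f\|_2\le\|g\|_2 E^{\lambda|_H}_{\beta\sqrt k}(f)$, and $E^{\lambda|_H}_{\beta\sqrt k}$ is the zeroth semi-norm of $\widetilde{\Hi}^a_\infty$. Density reuses (3): write $h=W_\beta^{-1}(g*\check f)\in A(G,W_\beta)$, approximate $f$ by some $W_\beta\tilde f$ with $\tilde f\in\widetilde{\Hi}^a_\infty$, and observe that $g*\check{\tilde f}\in\B$ approximates $h$. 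Passing from $W_\beta$ to a general extended weight $W=\iota(W_H)$ is handled by the domination \eqref{eq-domination} and Proposition~\ref{prop-weight-comparison}, which give a continuous dense embedding $A(G,W_\beta)\hookrightarrow A(G,W)$ to compose with. I expect the one genuinely delicate point to be (3): one must confirm that the global flow-invariance from (2) plus Lemma~\ref{lem-integral-domain} really are all that the Gaussian-averaging scheme of Theorem~\ref{thm-density-domain} requires, and in particular that every smoothed vector $\xi_\gamma$ lands in $\D(W_\beta)$.
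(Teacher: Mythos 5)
Your proposal is correct and follows essentially the same route as the paper: (1) via the estimate of Proposition \ref{prop-analytic}, (2) via Proposition \ref{prop-holomorphic-ext} plus commutation of $d\rho$ with left translations, (3) via containment of $\mathcal{C}$ and the Gaussian-averaging scheme of Theorem \ref{thm-density-domain} with Lemma \ref{lem-integral-domain}, and (4) via the template of Corollary \ref{cor-density-algebra} together with \eqref{eq-domination} and Proposition \ref{prop-weight-comparison}. Your observation that the global flow-invariance from (2) makes the real-analyticity step of Theorem \ref{thm-density-domain} unnecessary is a valid small streamlining, not a different argument.
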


\subsection{A partially holomorphic evaluation}

Since we consider evaluations of certain functions on a closed subset of $G_\Comp$ containing $G$, we need a better understanding of the corresponding domain of extension.

\begin{prop}\label{prop-partial-complexification-nilpotent}
The map
\begin{equation}\label{eq-partial-Cartan-decomposition}
    G \times \mathfrak{h} \ni (s, Y) \longmapsto s \exp(iY) \in G_\Comp
\end{equation}
is a diffeomorphism onto its image
\begin{equation}\label{eq-partial-domain}
    G^{\mathfrak{h}} := \{ s \exp(iY) : s \in G, \; Y \in \h \},
\end{equation}
which is a properly embedded submanifold of $G_\Comp$ (meaning that it is a closed subset of $G_\Comp$, which is also an embedded submanifold).
\end{prop}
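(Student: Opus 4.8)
The plan is to exhibit the map $\Theta:G\times\h\to G_\Comp$ from \eqref{eq-partial-Cartan-decomposition}, $(s,Y)\mapsto s\exp(iY)$, as a \emph{proper injective immersion}: such a map is automatically a diffeomorphism onto its image, and the image is a closed embedded submanifold, which is exactly the assertion. Throughout I would use that, since $G$ is connected, simply connected and nilpotent, both $\exp_G:\g\to G$ and $\exp:\g_\Comp\to G_\Comp$ are global real-analytic diffeomorphisms, so that $G=\exp(\g)$ is a \emph{closed} subset of $G_\Comp$ (cf. \cite{CG90}). The main tool is the complex conjugation $\sigma:G_\Comp\to G_\Comp$, the unique antiholomorphic automorphism lifting the conjugate-linear involution $X+iY\mapsto X-iY$ of $\g_\Comp$ (it exists because $G_\Comp$ is simply connected); it satisfies $\sigma\circ\exp=\exp\circ\,\overline{(\cdot)}$, hence $\sigma|_G=\id$ and $\sigma(\exp(iY))=\exp(-iY)$ for $Y\in\g$.

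For injectivity, the key observation is that for $g=s\exp(iY)$ with $s\in G$ and $Y\in\h$ one has $g^{-1}\sigma(g)=\exp(-iY)s^{-1}s\exp(-iY)=\exp(-2iY)$. Thus if $s_1\exp(iY_1)=s_2\exp(iY_2)$, this identity gives $\exp(-2iY_1)=\exp(-2iY_2)$, and injectivity of $\exp$ on $\g_\Comp$ forces $Y_1=Y_2$, whence $s_1=s_2$.

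For the immersion property I would compute $d\Theta_{(s,Y)}$ via left-logarithmic derivatives. Using that $\h$ is abelian (so $\exp(i(Y+tV))=\exp(iY)\exp(itV)$), a tangent vector with components $X\in\g$ in the $G$-factor and $V\in\h$ is sent, after left-translating the target to $\g_\Comp$, to $\operatorname{Ad}(\exp(-iY))X+iV=e^{-i\ad Y}X+iV$. Hence $d\Theta_{(s,Y)}$ is injective iff $\operatorname{Ad}(\exp(-iY))\g\cap i\h=\{0\}$. If $Z=e^{-i\ad Y}X\in i\h$, then applying $\sigma$ (using $\sigma(Z)=-Z$ together with $\sigma\circ\operatorname{Ad}(\exp(-iY))=\operatorname{Ad}(\exp(iY))\circ\sigma$ and $\sigma(X)=X$) yields $[e^{i\ad Y}+e^{-i\ad Y}]X=0$, that is, $2\cos(\ad Y)X=0$. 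Here is where nilpotency is essential: $\ad Y$ is nilpotent, so $\cos(\ad Y)=I-\tfrac12(\ad Y)^2+\cdots$ is the identity plus a nilpotent operator and is therefore invertible, forcing $X=0$ and $Z=0$. I expect this immersion step to be the main obstacle, since it is precisely the point that uses the nilpotency hypothesis and that would fail for a general abelian $\h$ in a non-nilpotent group.

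Finally, for properness, suppose $g_n=s_n\exp(iY_n)\to g$ in $G_\Comp$. The identity $g_n^{-1}\sigma(g_n)=\exp(-2iY_n)$ shows $\exp(-2iY_n)\to g^{-1}\sigma(g)$, and since $\exp$ is a homeomorphism of $\g_\Comp$ the $Y_n$ converge in $\g$; as $\h$ is a closed subspace the limit lies in $\h$. Then $s_n=g_n\exp(-iY_n)$ converges in $G_\Comp$, and since $G$ is closed its limit lies in $G$. Thus preimages of compact sets are compact and $\Theta$ is proper. A proper injective immersion is a closed embedding, so $\Theta$ is a diffeomorphism onto the closed embedded submanifold $G^{\h}$, which completes the proof.
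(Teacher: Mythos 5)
Your proof is correct, but it takes a genuinely different route from the paper's. The paper composes with the second-kind coordinate map $\Psi$ and uses that $\h$ is an abelian \emph{ideal} (it is spanned by an initial segment of the Jordan--H\"older basis) to commute $\exp(iY)$ past $\Psi(X)$ via $\Psi(X)\exp(iY)=\exp(i\,\mathrm{Ad}_{\Psi(X)}Y)\Psi(X)$; after the diffeomorphism $(X,Y)\mapsto(X,\mathrm{Ad}_{\Psi(X)}Y)$ the image becomes exactly $\Psi_\Comp(\g\oplus i\h)$, where $\Psi_\Comp(z_1X_1+\cdots+z_dX_d)=\exp(z_1X_1)\cdots\exp(z_dX_d)$ is a global diffeomorphism of $\g_\Comp$ onto $G_\Comp$ with respect to the real Jordan--H\"older basis $\{X_1,iX_1,\dots,X_d,iX_d\}$, so everything reduces to the image of a closed linear subspace under a global chart. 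You instead run the proper-injective-immersion argument directly, with the Cartan conjugation $\sigma$ doing all the work: the identity $g^{-1}\sigma(g)=\exp(-2iY)$ gives both injectivity and properness (together with closedness of $G=\exp(\g)$ and bijectivity of $\exp$ on $\g_\Comp$), and the immersion step reduces to the invertibility of $\cos(\ad Y)$, which is unipotent because $\ad Y$ is nilpotent. Your computation of the left-logarithmic derivative $\mathrm{Ad}(\exp(-iY))X+iV$ and the $\sigma$-trick yielding $2\cos(\ad Y)X=0$ are correct. The trade-off: the paper's argument is shorter and hands you explicit global holomorphic-compatible coordinates on $G^\h$, while yours never uses that $\h$ is an ideal (only that it is abelian and that $\g$ is nilpotent), so it covers arbitrary abelian subalgebras and isolates precisely where nilpotency enters --- the unipotency of $\cos(\ad Y)$ --- which is indeed the step that fails in general non-nilpotent groups.
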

\begin{proof}
Composing $\Psi : \g \rightarrow G$ from \eqref{eq-coordinates-map} on the left component, the map \eqref{eq-partial-Cartan-decomposition} becomes
    $$\g \times \mathfrak{h} \ni (X,Y) \longmapsto \Psi(X) \exp(iY) = \exp(i \text{Ad}_{\Psi(X)} Y) \Psi(X) \in G_\Comp.$$
Since the map
    $$\g \times \mathfrak{h} \ni (X,Y) \longmapsto (X, \text{Ad}_{\Psi(X)} Y ) \in \g \times \mathfrak{h}$$
is a well-defined diffeomorphism with the obvious inverse (note that $\h$ is an ideal of $\g$), we only need to show that the map
    $$\g \times \mathfrak{h} \ni (X,Y) \longmapsto \exp(iY) \Psi(X) \in G_\Comp$$
is a diffeomorphism onto a properly embedded submanifold of $G_\Comp$. If we write $X = a_1 X_1 + \cdots + a_d X_d \in \g$ and $Y = b_1 X_1 + \cdots + b_k X_k \in \h$, then, since $\h$ is abelian,
\begin{align*}
    \lefteqn{\exp(iY) \Psi(X) = \exp(i b_1 X_1) \cdots \exp( i b_k X_k) \exp(a_1 X_1 ) \cdots \exp(a_d X_d)} \\
    &\;= \exp \big( (a_1 + i b_1) X_1 \big) \cdots \exp \big( (a_k + i b_k ) X_k \big) \exp(a_{k+1} X_{k+1}) \cdots \exp(a_d X_d ),
\end{align*}
which is the restriction of the diffeomorphism $\Psi_\Comp : \g_\Comp \rightarrow G_\Comp$ given by
    $$\Psi_\Comp(z_1 X_1 + \cdots + z_d X_d ) = \exp(z_1 X_1) \,\cdots \,\exp(z_d X_d ), \quad z_1, \cdots, z_d \in \Comp.$$
to the properly embedded submanifold $\g \oplus i \h \subseteq \g_\Comp$.
Note that the above $\Psi_\Comp$ is a diffeomorphism since $\{X_1, iX_1, X_2, iX_2, \cdots, X_d, iX_d \}$ forms a real Jordan-H\"older basis of $\g_\Comp$ (cf. \cite[Corollary~1.126]{Knapp}). Now the conclusion follows.
\end{proof}

Note that the domain $G^{\mathfrak{h}} = \bigsqcup_{s \in G} s H_\Comp$ from \eqref{eq-partial-domain} is in fact a disjoint union of the left cosets $s H_\Comp$,
each of which carries a canonical complex manifold structure, which can be described as follows.
Recall that the connected subgroup $H_\Comp$ of $G_\Comp$ associated with the Lie subalgebra $\h_\Comp \leq \g_\Comp$ is simply-connected by \cite[Corollary~1.134]{Knapp} and the map
    $$\mathfrak{h}_\Comp \ni Z \mapsto \exp(Z) \in H_\Comp$$
is a biholomorphism.
For each $s \in G$, the left coset $sH_\Comp$ is a properly embedded submanifold of $G_\Comp$ diffeomorphic to $H_\Comp$
via the coordinates map
    $$\h_\Comp \ni Z \longmapsto s\exp(Z) \in sH_\Comp.$$

Now we consider evaluations of the functions in $\B$ on a certain subset of $G_\Comp$ containing $G$, which requires an appropriate extension procedure. Note that the corresponding domain of extension is
$$G^{\mathfrak{h}} = \bigsqcup_{s \in G} s H_\Comp$$
from \eqref{eq-partial-domain}.

\begin{defn}
We say that a function  $v : G^\h \rightarrow \Comp$ is {\bf $H$-holomorphic} if $v$ is continuous on $G^\h$ and is holomorphic when restricted to $sH_\Comp$ for each $s \in G$.
\end{defn}

\begin{thm}\label{thm-partial-hol-ext}
Each function $u \in \B$ admits a unique $H$-holomorphic extension to $G^{\mathfrak{h}}$.

\end{thm}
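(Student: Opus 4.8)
The plan is to follow the scheme of Proposition \ref{prop-hol-ext}, but to complexify only in the directions of the abelian ideal $\mathfrak{h}$, which eliminates the Baker--Campbell--Hausdorff rearrangement needed there. I would first treat an elementary element $u = \bar{g}\ast\check{f}$ with $g\in L^2(G)$ and $f\in \widetilde{\Hi}^a_\infty$, and define a candidate extension on $G^{\mathfrak{h}}$ by
\[
u_\Comp(s\exp(iY)) := \la \lambda(s)e^{i\partial\lambda(Y)}f,\, g\ra,\qquad s\in G,\ Y\in\mathfrak{h}.
\]
This is well-defined because $(s,Y)\mapsto s\exp(iY)$ is a diffeomorphism onto $G^{\mathfrak{h}}$ by Proposition \ref{prop-partial-complexification-nilpotent}, and it restricts to $u$ on $G$ upon setting $Y=0$. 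Note that $\widetilde{\Hi}^a_\infty\subseteq \Hi^a_{\lambda|_H,\infty}$ by Proposition \ref{prop-properties-of-tilde-domain} (1), so $f$ is an entire vector for the restricted representation $\lambda|_H$ of the connected abelian group $H$.

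Next I would verify that $u_\Comp$ is $H$-holomorphic. Continuity on $G^{\mathfrak{h}}$ follows from strong continuity of $\lambda$ and continuity of $Y\mapsto e^{i\partial\lambda(Y)}f$. For holomorphy on a single coset $sH_\Comp$ I would use the chart $\mathfrak{h}_\Comp\ni Z\mapsto s\exp(Z)$. Writing $Z=X+iY$ with $X,Y\in\mathfrak{h}$ and using that $\mathfrak{h}$ is abelian, $\exp(Z)=\exp(X)\exp(iY)$, hence $s\exp(Z)=(s\exp X)\exp(iY)\in G^{\mathfrak{h}}$ and
\[
u_\Comp(s\exp(Z)) = \la \lambda(s)e^{\partial\lambda(X)}e^{i\partial\lambda(Y)}f,\, g\ra = \la \lambda(s)e^{\partial\lambda(Z)}f,\, g\ra = \la \lambda(s)e^{d(\lambda|_H)(Z)}f,\, g\ra,
\]
where the middle equality uses the strong commutativity of $\{\partial\lambda(X_1),\dots,\partial\lambda(X_k)\}$ and the last one is Lemma \ref{lem-abs-conv}. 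By Proposition \ref{prop-holomorphic-ext} applied to $\pi=\lambda|_H$, the map $Z\mapsto e^{d(\lambda|_H)(Z)}f$ is holomorphic on all of $\mathfrak{h}_\Comp$ since $f$ is entire, so composing with the continuous functional $v\mapsto\la\lambda(s)v,g\ra$ yields holomorphy on $sH_\Comp$.

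For uniqueness I would argue on each coset $sH_\Comp\cong\mathfrak{h}_\Comp\cong\Comp^k$. The points of $sH_\Comp$ lying in $G$ are exactly $sH\cong\mathfrak{h}\cong\Real^k$ (using that $\exp_{G_\Comp}$ is a diffeomorphism for the simply connected nilpotent $G_\Comp$, so $H_\Comp\cap G=H$), a totally real submanifold of maximal dimension on which any $H$-holomorphic extension of $u$ must equal $u$. By the identity theorem in several complex variables a holomorphic function on $\Comp^k$ vanishing on $\Real^k$ vanishes identically, so the extension is uniquely determined on each coset, hence on $G^{\mathfrak{h}}$. In particular $u_\Comp$ is independent of the chosen representation $u=\sum_i \bar{g_i}\ast\check{f_i}$, so $u\mapsto u_\Comp$ is well-defined on $\Phi^H_\infty(L^2(G)\odot\widetilde{\Hi}^a_\infty)$.

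Finally I would pass to general $u\in\B$ by density. The estimate $\|e^{i\partial\lambda(Y)}f\|\le E^{\lambda|_H}_{|Y|}(f)$ (as in Proposition \ref{prop-analytic evaluation-domain} (3)) gives, for a compact $K\subseteq G$ and $s_0>0$,
\[
\sup_{s\in K,\ |Y|\le s_0}\bigl|u_\Comp(s\exp(iY))\bigr| \le \|g\|_2\, E^{\lambda|_H}_{s_0}(f),
\]
and taking the infimum over representations of $u$ bounds the left-hand side by the $n=0$ seminorm $q_{s_0}$ of $\B$. Thus $u\mapsto u_\Comp$ is continuous into the space of $H$-holomorphic functions with the topology of uniform convergence on compact subsets of $G^{\mathfrak{h}}$; since a locally uniform limit of $H$-holomorphic functions is again $H$-holomorphic (continuity is preserved, and coset-holomorphy is preserved by Weierstrass's theorem), this space is complete, and density of $\Phi^H_\infty(L^2(G)\odot\widetilde{\Hi}^a_\infty)$ in $\B$ extends the map to all of $\B$, carrying uniqueness along. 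The main obstacle I anticipate is arranging the density/continuity step so that the limit is genuinely $H$-holomorphic rather than merely continuous; this rests on the completeness of the space of $H$-holomorphic functions under local uniform convergence and on the seminorm bound above. The coset-holomorphy itself is comparatively transparent, precisely because complexifying only along the abelian $\mathfrak{h}$ lets us bypass the BCH step of Proposition \ref{prop-hol-ext}.
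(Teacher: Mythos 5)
Your proposal is correct and follows essentially the same route as the paper's proof: the same formula $u_\Comp(s\exp(iY)) = \la \lambda(s)e^{i\partial\lambda(Y)}f, g\ra$ on elementary tensors, well-definedness via Proposition \ref{prop-partial-complexification-nilpotent}, coset-holomorphy from $\exp(Z)=\exp(X)\exp(iY)$ for $Z=X+iY\in\mathfrak{h}_\Comp$ together with entirety of $f$ for $\lambda|_H$, uniqueness by the identity theorem on each coset $sH_\Comp$, and extension to all of $\B$ by the density-plus-continuity argument of Proposition \ref{prop-hol-ext}. Your explicit seminorm bound $\sup_{s\in K,\,|Y|\le s_0}|u_\Comp(s\exp(iY))|\le \|g\|_2\,E^{\lambda|_H}_{s_0}(f)$ and the remark that $sH_\Comp\cap G=sH$ is a maximal totally real slice are slightly more detailed than the paper's treatment but amount to the same argument.
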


\color{black}

\begin{proof}
First, we consider the case $u = \bar{g}* \check{f}$ with $g\in L^2(G)$ and $f \in \widetilde{\Hi}^a_\infty\subseteq \Hi^a _{\lambda|_H , \infty}$. Define the extension $u^{\mathfrak{h}}_\Comp$ of $u$ to $G^{\mathfrak{h}}$ by
\begin{equation}\label{eq-hol-ext-nilpotent}
    u^{\mathfrak{h}}_\Comp (s \exp(iY) ) := \la \lambda(s) e^{i \partial \lambda(Y)} f , g \ra, \;\; s \in G, \; Y \in \mathfrak{h},
\end{equation}
which is well-defined thanks to Proposition~\ref{prop-partial-complexification-nilpotent}.

Toward the holomorphicity of $u^{\mathfrak{h}}_\Comp$ on each coset $sH_\Comp$, $s\in G$, we consider $Z = X+ iY \in \mathfrak{h}_\Comp$. Since $X$ and $Y$ commute, we have $\exp(Z) = \exp(X) \exp(iY)$ and $\displaystyle e^{d \lambda(Z)} f = e^{d \lambda(X)} e^{i d \lambda(Y)} f$.
Thus, the map
\begin{align*}
\mathfrak{h}_\Comp \ni Z \mapsto u^{\mathfrak{h}}_\Comp \big(s \exp(Z) \big) 
& = u^{\mathfrak{h}}_\Comp \big(s \exp(X) \exp(iY) \big)\\
& = \la \lambda(s) e^{d \lambda(X)} e^{i d \lambda(Y)} f , g \ra\\
& = \la \lambda(s) e^{d \lambda(Z)} f , g \ra
\end{align*}
is holomorphic thanks to the choice $f\in \Hi^a _{\lambda|_H , \infty}$.

For the continuity of $u^{\mathfrak{h}}_\Comp$ we consider sequences $(s_j)_j \subseteq G$ and $(Y_j)_j \subseteq \h$ such that $s_j \rightarrow s$ in $G$ and $Y_j \rightarrow Y$ in $\h$. 
Since $u^{\mathfrak{h}}_\Comp$ is holomorphic on each coset $sH_\Comp$, $s\in G$ we can see that the map
    $$\h_\Comp \ni Z \mapsto e^{d \lambda(Z) }f \in L^2(G)$$
is holomorphic by \cite[Theorem~3.31]{Rudin3}. Hence, we have $e^{i d \lambda(Y_j)} f \rightarrow e^{i d \lambda(Y)} f$ as $j \rightarrow \infty$ and
\begin{align*}
    u^\h _\Comp \big( s_j \exp(iY_j) \big)
    & = \la e^{i d \lambda(Y_j)} f , \lambda(s_j )^{-1} g \ra\\
    & \rightarrow \la e^{i d \lambda(Y)} f , \lambda(s)^{-1} g \ra = u^\h _\Comp \big( s \exp (iY) \big)
\end{align*}
as $j \rightarrow \infty$,
proving the continuity of $u^\h _\Comp$.

The uniqueness of such extension follows from \cite[Lemma~7.21]{Rudin3} applied to each left coset $s H_\Comp$, $s\in G$.
Finally, we repeat the same argument of Proposition \ref{prop-hol-ext} for the general case.
\end{proof}

\subsection{The Gelfand spectrum of the algebra $\check{\mathcal{C}}$}

Let us check condition (A2) in the Introduction for the algebra $\check{\mathcal{C}}$.

\begin{thm}\label{thm-spectrum-of-C}
Every element in the algebra $\check{\mathcal{C}}$ extends uniquely to an $H$-holomorphic function on $G^{\mathfrak{h}}$. Furthermore, the evaluation functionals at the points of $G^{\mathfrak{h}}$ are precisely the characters of $\check{\mathcal{C}}$, i.e.,
    $${\rm Spec}\, \check{\mathcal{C}} \cong G^{\mathfrak{h}}.$$
\end{thm}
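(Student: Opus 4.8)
The first assertion is immediate: by Theorem~\ref{thm-B-embeds-into-A}(2) the algebra $\check{\mathcal{C}}$ embeds continuously as a subalgebra of $\B$, so Theorem~\ref{thm-partial-hol-ext} endows each $u\in\check{\mathcal{C}}$ with a unique $H$-holomorphic extension $u^{\mathfrak{h}}_\Comp$ to $G^{\mathfrak{h}}$. For the spectral identification my plan is to transport the problem to the convolution side. The check map $J$ of Definition~\ref{def-C-check} is an isomorphism of topological algebras, and by construction $\mathcal{C}$ is isomorphic, through $\Psi_*$ and $\Fc^{\Real^d}$, to the convolution algebra $\mathcal{A}_0 := C^\infty_c(\Real^k)\odot\mathcal{S}(\Real^{d-k})$. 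Hence ${\rm Spec}\,\check{\mathcal{C}}\cong{\rm Spec}\,\mathcal{C}\cong{\rm Spec}\,\mathcal{A}_0$, and it suffices to compute the last spectrum and then match it with $G^{\mathfrak{h}}$.

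Next I would check that, for each $p=s\exp(iY)\in G^{\mathfrak{h}}$, the evaluation $\mathrm{ev}_p: u\mapsto u^{\mathfrak{h}}_\Comp(p)$ is a continuous character. Multiplicativity is formal: for $u,v\in\check{\mathcal{C}}$ the product $u^{\mathfrak{h}}_\Comp\,v^{\mathfrak{h}}_\Comp$ is an $H$-holomorphic function on $G^{\mathfrak{h}}$ restricting to $uv$ on $G$, so the uniqueness in Theorem~\ref{thm-partial-hol-ext} forces it to equal $(uv)^{\mathfrak{h}}_\Comp$, whence $\mathrm{ev}_p(uv)=\mathrm{ev}_p(u)\mathrm{ev}_p(v)$. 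Continuity follows from the continuous embedding $\check{\mathcal{C}}\hookrightarrow\B$ together with the estimate recorded at the end of the proof of Proposition~\ref{prop-hol-ext}, which bounds $|u^{\mathfrak{h}}_\Comp(s\exp(iY))|=|\la e^{i\partial\lambda(Y)}f,\lambda(s)^{-1}g\ra|$ by an $E_s$-type seminorm. Distinct points of $G^{\mathfrak{h}}$ give distinct characters because $\mathcal{C}$, and hence $\check{\mathcal{C}}$, separates the points of $G^{\mathfrak{h}}$: by Proposition~\ref{prop-partial-complexification-nilpotent} a point of $G^{\mathfrak{h}}$ is $\Psi_\Comp$ of an element of $\g\oplus i\mathfrak{h}$, and the (partial) Fourier--Laplace transforms of $\mathcal{A}_0$ already separate such points. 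This yields an injection $G^{\mathfrak{h}}\hookrightarrow{\rm Spec}\,\check{\mathcal{C}}$.

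The heart of the argument is the computation ${\rm Spec}\,\mathcal{A}_0\cong\Comp^k\times\Real^{d-k}$. A continuous character $\chi$ of $\mathcal{A}_0$ restricts to continuous characters of the convolution algebras $C^\infty_c(\Real^k)$ and $\mathcal{S}(\Real^{d-k})$, each of which is integration against a continuous semicharacter $x\mapsto e^{\langle x,w\rangle}$. Compact support permits an arbitrary $w\in\Comp^k$ on the first factor (this is the Paley--Wiener/Fourier--Laplace phenomenon), whereas temperedness on the Schwartz factor forces a polynomially bounded, hence purely oscillatory, semicharacter $x\mapsto e^{i\langle x,\xi\rangle}$ with $\xi\in\Real^{d-k}$. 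A standard argument that characters of an algebraic tensor product factor through the two factors then gives ${\rm Spec}\,\mathcal{A}_0\cong\Comp^k\times\Real^{d-k}$, the character being $\widehat{\phi}(z,\xi)$. Translating back through $\Fc^{\Real^d}$ and $\Psi_*$, the pair $(z,\xi)\in\Comp^k\times\Real^{d-k}$ is exactly the coordinate $\Psi_\Comp(z_1X_1+\cdots+z_kX_k+a_{k+1}X_{k+1}+\cdots+a_dX_d)$ of $G^{\mathfrak{h}}$ produced in the proof of Proposition~\ref{prop-partial-complexification-nilpotent}.

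The main obstacle is precisely this last matching: one must verify that the abstractly produced character $\widehat{\phi}(z,\xi)$ coincides with the evaluation $u^{\mathfrak{h}}_\Comp(p)$ of the $H$-holomorphic extension, i.e.\ that the Fourier--Laplace evaluation of $f\in\mathcal{C}$ at the complex point and the value $\la\lambda(s)e^{i\partial\lambda(Y)}f,g\ra$ of \eqref{eq-hol-ext-nilpotent} (applied to the check $\check f$) agree. This is the bookkeeping between the canonical coordinates of the second kind, the action of $e^{i\partial\lambda(Y)}$ on $\Hi^a_{\lambda|_H,\infty}$, and the normalization of $\Fc^{\Real^d}$; it is the nilpotent analogue of the elementary $G=\Real$ identification of the characters of $\Fc(C^\infty_c)$ with evaluations on $\Comp$. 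Once this is in place, $p\mapsto\mathrm{ev}_p$ is a continuous bijection onto ${\rm Spec}\,\check{\mathcal{C}}$ whose inverse is continuous, since under the coordinate identification the correspondence is the homeomorphism realized by the Fourier--Laplace transform; point evaluations depend continuously on $p\in G^{\mathfrak{h}}$, matching the Gelfand topology and completing ${\rm Spec}\,\check{\mathcal{C}}\cong G^{\mathfrak{h}}$.
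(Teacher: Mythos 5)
Your proposal is correct and follows essentially the same route as the paper: transport the character via the check map, $\Psi_*$ and $\Fc^{\Real^d}$ to the convolution algebra $C^\infty_c(\Real^k)\odot\mathcal{S}(\Real^{d-k})$, solve the resulting Cauchy functional equation to obtain exponentials with complex frequencies in the first $k$ variables and (by temperedness) real frequencies in the remaining $d-k$, and translate back through $\Psi_\Comp$ to an evaluation on $G^{\mathfrak{h}}$. The extra care you take with injectivity, continuity of $p\mapsto\mathrm{ev}_p$, and the final matching of the Fourier--Laplace evaluation with \eqref{eq-hol-ext-nilpotent} is sound and fills in details the paper leaves implicit.
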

\begin{proof}
The first statement is from Theorem \ref{thm-partial-hol-ext}. Moreover, we can clearly see that the evaluation functionals at the points of $G^{\mathfrak{h}}$ belong to ${\rm Spec}\, \check{\mathcal{C}}$.

For the converse, we consider $\varphi \in {\rm Spec}\, \check{\mathcal{C}}$.
For the ``check map"
$$J:\mathcal{C}\to \check{\mathcal{C}},\; f\mapsto \check{f}$$
we know that the linear functional $\varphi\circ J\circ \Psi_*$ is multiplicative with respect to pointwise multiplication on $\g\cong \Real^d$. 
Composing the Fourier transform, we obtain a continuous linear functional $\psi = \varphi\circ J\circ \Psi_* \circ \Fc^{\Real^d}$ on $C^\infty_c(\Real^k) \odot \mathcal{S}(\Real^{d-k})$, which is multiplicative with respect to convolution. Restricting $\psi$ to the subspace $C^\infty_c(\Real^k) \odot C^\infty_c(\Real^{d-k})$ leads to the Cauchy functional equation as in \cite[Section 6.1.2]{GLLST}. We may apply the same argument of \cite[Theorem 6.13]{GLLST} to get $(c_1,\cdots,c_d) \in \Comp^d$ such that
    $$(\psi, \widehat{F}) = \int_{\Real^d}\widehat{F}(t_1,\cdots,t_d)e^{-i(c_1t_1+\cdots c_dt_d)}\,dt_1\cdots dt_d, \, \widehat{F} \in C^\infty_c(\Real^k) \odot C^\infty_c(\Real^{d-k}).$$
By putting non-zero functions $h_1,\cdots, h_k\in C^\infty_c(\Real)$ for the first $k$ components, we get a continuous linear functional $\phi$ on $\mathcal{S}(\Real^{d-k})$ of the form
$$(\phi, \widehat{G}) = \int_{\Real^{d-k}}\widehat{G}(t_{k+1},\cdots,t_d)e^{-i(c_{k+1}t_{k+1}+\cdots c_dt_d)}\,dt_{k+1}\cdots dt_d, \; \widehat{G} \in C^\infty_c(\Real^{d-k}).$$
Since $C^\infty_c(\Real^{d-k})$ is dense in $\mathcal{S}(\Real^{d-k})$ we can conclude that the function $e^{-i(c_{k+1}t_{k+1}+\cdots c_dt_d)}$ is a tempered distribution on $\Real^{d-k}$, which means that
$$(c_{k+1},\cdots,c_d) \in \Real^{d-k}.$$
Therefore, we have $$ (\varphi\circ J\circ \Psi_*, F) = (\psi, \widehat{F}) = F_\Comp(c_1,\cdots,c_d),\; F\in \Fc^{\Real^d}(C^\infty_c(\Real^k) \odot \mathcal{S}(\Real^{d-k})),$$
where $(c_1,\cdots,c_d) \in \mathfrak{h}_\Comp\times (\g\ominus \mathfrak{h})$ and
$F_\Comp$ is the unique holomorphic extension of $F$ for the first $k$ variables.
This, in turn, tells us that $\varphi\circ J$ is the evaluation on the point $\Psi_\Comp(c_1,\cdots,c_d) \in \{\exp(iY)s:s\in G, Y\in \mathfrak{h}\}$. Finally, we can conclude that $\varphi$ is the evaluation on the point $\Psi_\Comp(c_1,\cdots,c_d)^{-1} \in G^{\mathfrak{h}}$.
\end{proof}

Combining Theorem \ref{thm-B-embeds-into-A}, Proposition \ref{prop-properties-of-tilde-domain} and Theorem \ref{thm-spectrum-of-C} we get the following, which implies that the subalgebra $\check{\mathcal{C}}$ satisfies all the conditions (A1), (A2') and (A3") in the Introduction.

\begin{cor}\label{cor-nilpotent-main}
The topological algebra $\check{\mathcal{C}}$ is continuously embedded in $A(G,W)$ with a dense image for any weight $W = \iota(W_H)$ extended from a weight $W_H$ on the dual of $H$ as in Section \ref{subsec-weights}.
Furthermore, we have ${\rm Spec}\, \check{\mathcal{C}} \cong G^{\mathfrak{h}}$ and every element in ${\rm Spec}A(G,W)$ acts on $\check{\mathcal{C}}$ as an evaluation functional at the points of $G^\h$.
\end{cor}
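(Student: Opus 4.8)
The plan is to assemble the three cited results into the three asserted properties, which correspond exactly to conditions (A1), (A2') and (A3"), so most of the work is bookkeeping rather than new computation.

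For the continuous dense embedding (A1), I would chain the two inclusions already in place. By Theorem \ref{thm-B-embeds-into-A} (2) the algebra $\check{\mathcal{C}}$ embeds continuously into $\B$ with dense image, and by Proposition \ref{prop-properties-of-tilde-domain} (4) the space $\B$ embeds continuously into $A(G,W)$ with dense image for every weight $W = \iota(W_H)$. Since the continuous image of a dense set is dense in the image of the intermediate space, which is in turn dense in $A(G,W)$, the composition $\check{\mathcal{C}} \hookrightarrow \B \hookrightarrow A(G,W)$ is a continuous embedding with dense image. For the spectrum identification (A2') there is nothing to add: it is precisely the content of Theorem \ref{thm-spectrum-of-C}, giving ${\rm Spec}\,\check{\mathcal{C}} \cong G^{\mathfrak{h}}$ via evaluation at the points of $G^{\mathfrak{h}}$.

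The substantive step is (A3"). I would take an arbitrary element of ${\rm Spec}A(G,W)$, which by Theorem \ref{thm-spectrum-extended-weight-general} has the form $\lambda(s)e^{i\partial\lambda_G(X)}W^{-1}$ with $s\in G$ and $X\in\mathfrak{h}$; the constraint $X\in\mathfrak{h}$ is exactly what places $s\exp(iX)$ inside the partial complexification $G^{\mathfrak{h}}$, so that the $H$-holomorphic extension (which only extends in $\mathfrak{h}$-directions) suffices. I would first verify the claim on the generators $u = \bar g * \check f$ of $\B$, where $g\in L^2(G)$ and $f\in\widetilde{\Hi}^a_\infty$. Since $\widetilde{\Hi}^a_\infty\subseteq\D(W)$ by Proposition \ref{prop-properties-of-tilde-domain} (1), the duality \eqref{eq-spec-duality} applies with $\eta = \bar g$ and $\xi = f$ and yields
$$(\lambda(s)e^{i\partial\lambda_G(X)}W^{-1},\; \bar g*\check f) = \la \lambda(s)e^{i\partial\lambda_G(X)}f,\, g\ra,$$
which is exactly the value $u^{\mathfrak{h}}_\Comp(s\exp(iX))$ of the $H$-holomorphic extension from \eqref{eq-hol-ext-nilpotent} with $Y = X$. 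Thus the spectral functional and point evaluation at $s\exp(iX)$ agree on generators. To transfer this to all of $\check{\mathcal{C}}$, I would observe that both functionals are continuous on $\B$: the spectral functional because $\B\hookrightarrow A(G,W)$ continuously, and the point evaluation because the extension map of Theorem \ref{thm-partial-hol-ext} is continuous (its proof mirrors that of Proposition \ref{prop-hol-ext}). As the generators $\bar g*\check f$ are dense in $\B$, the two continuous functionals coincide on all of $\B$, hence on the dense subalgebra $\check{\mathcal{C}}$.

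The only genuinely delicate point I anticipate is the density/continuity bookkeeping in (A3"): the elements of $\check{\mathcal{C}}$ are not themselves of the elementary form $\bar g*\check f$, so one must run the verification on the generators of the larger space $\B$ and then push the identity along the dense inclusion $\check{\mathcal{C}}\hookrightarrow\B$. Everything else is a direct citation, and the computation matching \eqref{eq-spec-duality} with \eqref{eq-hol-ext-nilpotent} parallels verbatim the argument already used in Theorem \ref{thm-eval-action}.
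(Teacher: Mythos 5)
Your proof is correct, and the first two assertions are handled exactly as the paper intends (chaining Theorem \ref{thm-B-embeds-into-A}~(2) with Proposition \ref{prop-properties-of-tilde-domain}~(4), and quoting Theorem \ref{thm-spectrum-of-C}). For the third assertion, however, you take a genuinely different route. You invoke Theorem \ref{thm-spectrum-extended-weight-general} to write an arbitrary character of $A(G,W)$ as $\lambda(s)e^{i\partial\lambda(X)}W^{-1}$ with $X\in\mathfrak{h}$ and then match \eqref{eq-spec-duality} against \eqref{eq-hol-ext-nilpotent} on generators, exactly as in Theorem \ref{thm-eval-action}. The paper instead gets this assertion for free from the first two: since $\check{\mathcal{C}}$ is continuously and densely embedded in $A(G,W)$, every $\varphi\in{\rm Spec}\,A(G,W)$ restricts to a nonzero continuous character of $\check{\mathcal{C}}$ (it cannot vanish on a dense subalgebra), hence lies in ${\rm Spec}\,\check{\mathcal{C}}\cong G^{\mathfrak{h}}$ by Theorem \ref{thm-spectrum-of-C}, i.e.\ it is an evaluation at a point of $G^{\mathfrak{h}}$. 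The difference is not cosmetic: Theorem \ref{thm-spectrum-extended-weight-general} rests on \cite{GT}, whereas the whole point of Section \ref{sec-global-sol} (stated in the Introduction and exploited in Remark \ref{rem-indep-proof-spec}) is that this corollary is proved \emph{independently} of \cite{GT} and is then used to rederive Theorem \ref{thm-spectrum-extended-weight-general} for nilpotent groups; with your route that remark would become circular. What your argument buys in exchange is explicit information the abstract restriction argument does not immediately give, namely that the character $\lambda(s)e^{i\partial\lambda(X)}W^{-1}$ evaluates at precisely the point $s\exp(iX)$; but this identification is also recovered in Remark \ref{rem-indep-proof-spec} from \eqref{eq-hol-ext-nilpotent}. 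If you keep your route, you should at least flag the dependence on \cite{GT}; otherwise, replace the appeal to Theorem \ref{thm-spectrum-extended-weight-general} by the restriction argument above.
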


\begin{rem}\label{rem-indep-proof-spec}
The above corollary provides a proof of Theorem \ref{thm-spectrum-extended-weight-general}, which is independent of the results in \cite{GT}, for a connected, simply connected and nilpotent Lie group $G$. 
In fact, Corollary \ref{cor-nilpotent-main} tells us that ${\rm Spec}A(G,W) \subseteq {\rm Spec}\, \check{\mathcal{C}} \cong G^{\mathfrak{h}}$.
Therefore, any element $\varphi \in {\rm Spec}A(G,W)$ is associated with an evaluation functional at the points of $G^\h$ so that \eqref{eq-hol-ext-nilpotent} tells us that $\varphi = \lambda(s) e^{i \partial \lambda(Y)}W^{-1}$ for some $s \in G$ and $Y \in \mathfrak{h}$.
Now we repeat the same argument of the proof of \cite[Theorem 6.17]{GLLST}.
By \cite[Proposition 2.1]{GLLST} we know that the operator $e^{i \partial \lambda(Y)}W^{-1}$ is bounded if and only if $e^{i \partial \lambda_H(Y)}W^{-1}_H$ is bounded, which leads to the conclusion we wanted.

\end{rem}

\section{The case of the $ax+b$-group}
In this final section, we examine the case of the $ax+b$-group $G$, where the global solution can still be established while $G$ does not belong to the class of nilpotent Lie groups.

We will follow the description of $G$ from \cite[Section 4.3]{FL15}. Let
$$G = \{(a,b): a,b\in \Real\}$$
with the group law
$$(a,b)\cdot (a',b') := (a+a', e^{-a'}b+b'),\;\; (a,b), (a',b') \in G.$$
Note that $(0,0)$ is the identity of $G$. The Lie algebra $\g$ is the vector space $\Real^2$ with the basis $\{A = (1,0), B=(0,1)\}$ and the Lie bracket
$$[A,B]=B.$$
The $ax+b$-group $G$ is a solvable Lie group whose exponential map
$$\exp: \g \to G,\;\;(s,t)\mapsto (s, \frac{e^{-s}-1}{-s}t)$$
is a global diffeomorphism. Note that we take the limit $s\to 0$ in the above for $\exp(0,t) = (0,t)$, $t\in \Real$.

The left Haar measure on $G$ can be chosen to be the Lebesgue measure $dadb$ so that $L^2(G) \cong L^2(\Real^2)$, and
the left regular representation $\lambda$ can be easily described as follows.
$$\lambda(a,b)F(s,t) = F(s-a, t-e^{a-s}b),\;\; F\in L^2(G),\; a,b,s,t \in \Real.$$
The unitary dual $\widehat{G}$ is known to be a set of two points consisting of representations $\pi_+$ and $\pi_-$ both acting on $L^2(\Real)$ as follows.
$$\pi_\pm(a,b)\xi(t) = \exp(\mp i be^{a-t}) \xi(t-a),\;\; \xi \in L^2(\Real),\; a,b\in \Real.$$
Furthermore, for $F\in L^1(G)$ we have
\begin{equation}\label{eq-pi-pm}
\pi_\pm(F)\xi(t) = \int_\Real \hat{F}^2(t-s, \pm e^{-s})\xi(s) \,ds,
\end{equation}
where $\displaystyle \hat{F}^2(s,u) = \frac{1}{2\pi}\int_\Real F(s,a)e^{-iau}\,da$.

Since $G$ is non-unimodular, we need the following Duflo-Moore operator $\delta$ acting on $L^2(\Real)$ (as a positive unbounded operator) for the complete Plancherel picture.
\begin{equation}\label{eq-DM-op}
\delta \xi(t) := e^{-\frac{t}{2}}\xi(t),\;\; \xi \in L^2(\Real).
\end{equation}
Note that we have an onto unitary (\cite[Section 4.3.2]{FL15})
\begin{equation}\label{eq-Plancherel-L2}
\Fc^G: F \in L^2(G) \mapsto \pi_+(F)\circ \delta \oplus \pi_-(F)\circ \delta \in S^2(L^2(\Real))\oplus_2 S^2(L^2(\Real))
\end{equation}
and an onto isometry (\cite[(3.50) and Theorem~4.15]{Fuhr})
\begin{equation}\label{eq-Plancherel-L1}
F \in A(G)\mapsto \pi_+(F)\circ \delta^2 \oplus \pi_-(F)\circ \delta^2 \in S^1(L^2(\Real))\oplus_1 S^1(L^2(\Real)).
\end{equation}
Here, $S^1(\Hi)$ and $S^2(\Hi)$ are the trace class and the Hilbert-Schmidt class on the Hilbert space $\Hi$ and $\oplus_p$ ($1\le p$) refers to the $\ell^p$-direct sum of Banach spaces.
The unitary $\Fc^G$ above intertwines the left regular representation $\lambda$ and the representation $\pi_+ \oplus \pi_-$.
This allows us to transfer the weight $W_\beta = e^{\beta |\partial\lambda(A)|}$ (for $\beta>0$) as in Section \ref{subsec-weights} acting on $L^2(G)$ to the direct sum $W^+_\beta \oplus W^-_\beta$, where the operators $W^\pm_\beta$ acting on $L^2(\Real)$ are given by
$$\Fc^\Real \circ W^\pm_\beta \circ (\Fc^\Real)^{-1} = M_{e^{-\beta |u|}}.$$

Now we define the subalgebra we are looking for.

\begin{defn}
Let $\mathcal{D}$ be the Fr\'{e}chet space given by
$$\mathcal{D} := \{F\in L^2(\Real^2)\cong L^2(G): e^{\gamma |u|}\widehat{F}(u,v) \in L^2(\Real^2),\; \forall \gamma>0\}$$
equipped with the semi-norms
$$\|F\|_\gamma := \|e^{\gamma |u|}\widehat{F}(u,v)\|_{L^2(\Real^2)},\;\; \gamma>0.$$
We also consider the space
$$\mathcal{E} := \Fc^{\Real^2}(C^\infty_c(\Real) \odot \mathcal{S}(\Real)).$$
The locally convex topological algebra structure of $\mathcal{E}$ comes from, through the map $\Fc^{\Real^2}$, the algebraic tensor product $C^\infty_c(\Real) \odot \mathcal{S}(\Real)$, which is an algebra with respect to convolution, so that $\mathcal{E}$ is an algebra with respect to pointwise multiplication.
\end{defn}

\begin{prop}\label{prop-density-D and E}
\begin{enumerate}
    \item The space $\mathcal{D}$ is continuously embedded in $A(G,W_\beta)$ with a dense image for all $\beta>0$.
    \item The algebra $\mathcal{E}$ is continuously embedded in $\mathcal{D}$ with a dense image.
\end{enumerate}
\end{prop}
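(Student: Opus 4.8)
The plan for (1) is to realize the inclusion $\mathcal{D}\hookrightarrow A(G,W_\beta)$ through the identity $F=W_\beta^{-1}(W_\beta F)$, so that everything reduces to showing $W_\beta F\in A(G)$ together with a norm bound $\|W_\beta F\|_{A(G)}\lesssim\|F\|_\gamma$ for a suitable $\gamma>\beta$. Since $F\in\mathcal{D}\subseteq\D(W_\beta)$, the function $W_\beta F$ lies in $L^2(G)$, and I would pass to the operator side via the Plancherel picture: by \eqref{eq-Plancherel-L1} it is enough to control $\sum_{\pm}\|\pi_\pm(W_\beta F)\circ\delta^2\|_{S^1}$. Using that $\Fc^G$ intertwines $\lambda$ with $\pi_+\oplus\pi_-$, the weight $W_\beta$ becomes left multiplication by $W^+_\beta\oplus W^-_\beta$, whence $\pi_\pm(W_\beta F)=W^\pm_\beta\,\pi_\pm(F)$, and the task becomes bounding the trace norms of $W^\pm_\beta\,\pi_\pm(F)\,\delta^2$.

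The technical heart is this $S^1$-estimate, where the non-unimodularity enters through the Duflo--Moore operator $\delta$ of \eqref{eq-DM-op}. Starting from the explicit kernel in \eqref{eq-pi-pm}, the operator $\pi_\pm(W_\beta F)\delta^2$ is an integral operator with kernel $\widehat{(W_\beta F)}^{2}(t-s,\pm e^{-s})\,e^{-s}$. After the multiplicative change of variables $\sigma=\pm e^{-s}$, which turns $\delta$ into multiplication by $|\sigma|^{1/2}$ and recasts the operator as a standard integral operator on $L^2$ of a half-line, I would factor it as a product of two Hilbert--Schmidt operators and apply $\|AB\|_{S^1}\le\|A\|_{S^2}\|B\|_{S^2}$. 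Each Hilbert--Schmidt norm is then computed, by Plancherel in the remaining variable, as a weighted $L^2$-norm of $\widehat F$ carrying the factor coming from $W^\pm_\beta$, and the point is that every such norm is dominated by a single seminorm $\|F\|_\gamma$ with $\gamma>\beta$. This simultaneously yields $W_\beta F\in A(G)$ and the desired continuity. I expect the correct splitting of the kernel into two square-integrable factors --- so that the unbounded $\delta^2$ is absorbed while the exponential weight is retained --- to be the main obstacle.

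For the density in (1) I would argue as in Theorem \ref{thm-density-domain} and Corollary \ref{cor-density-algebra}: a general element of $A(G,W_\beta)$ is $W_\beta^{-1}\phi$ with $\phi=\bar g\ast\check f$, and by Remark \ref{rem-weighted-Fourier-element} it suffices to approximate $f$ in $L^2(G)$ by vectors of the form $W_\beta\tilde f$ for which the resulting coefficient function lies in $\mathcal{D}$; this is possible once $\mathcal{D}$ is dense in $L^2(G)$, which is clear since $\mathcal{E}\subseteq\mathcal{D}$. Part (2) is then comparatively routine. The continuity $\mathcal{E}\hookrightarrow\mathcal{D}$ is immediate from the definitions: for $F\in\mathcal{E}$ the transform $\widehat F$ factors as a compactly supported function in the first variable times a Schwartz function, so $e^{\gamma|u|}\widehat F\in L^2$ with $\|F\|_\gamma$ controlled by the defining seminorms of $\mathcal{E}$ for every $\gamma>0$. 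For the density of $\mathcal{E}$ in $\mathcal{D}$ I would approximate $F\in\mathcal{D}$ in each seminorm $\|\cdot\|_\gamma$ by multiplying $\widehat F$ by smooth cut-offs and mollifying, using that $C^\infty_c(\Real)\odot\mathcal{S}(\Real)$ is dense in the weighted $L^2$-spaces defining $\mathcal{D}$; a standard test-function approximation then closes the argument.
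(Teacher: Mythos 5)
Your overall frame for the continuity half of (1) — pass through \eqref{eq-Plancherel-L1}, identify $W^\pm_\beta\pi_\pm(F)\delta^2$ as an integral operator via \eqref{eq-pi-pm}, substitute $v=\pm e^{-s}$, and estimate in the Plancherel picture — is the same as the paper's. But two steps do not go through as you have written them.

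The more serious one is your density argument for (1). You propose to imitate Corollary \ref{cor-density-algebra}: write $h=W_\beta^{-1}(\bar g*\check f)$, replace $f$ by $W_\beta\tilde f$, and claim the resulting coefficient function $\bar g*\check{\tilde f}$ lies in $\mathcal{D}$ "once $\mathcal{D}$ is dense in $L^2(G)$". That inference fails, because $\mathcal{D}$ is \emph{not} defined as the range of the coefficient map $\Phi$ on some tensor product (unlike $\A_r$, for which the trick works by construction). It is a space of $L^2$ \emph{functions} on $G\cong\Real^2$ singled out by exponential decay of the \emph{Euclidean} Fourier transform in the first variable. A group convolution $\bar g*\check{\tilde f}$ is a priori only in $C_0(G)$, need not lie in $L^2(\Real^2)$ at all, and its Euclidean Fourier transform bears no simple relation to those of $g$ and $\tilde f$; so there is no reason it belongs to $\mathcal{D}$, and density of $\mathcal{D}$ in $L^2(G)$ is irrelevant to this. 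The paper instead proves density entirely on the operator side: it shows that the image $\tilde{\mathcal{D}}=\{W^+_\beta\pi_+(F)\delta^2\oplus W^-_\beta\pi_-(F)\delta^2:F\in\mathcal{D}\}$ contains every rank-one pair $T_{m,n}\oplus T_{m',n'}$ built from Hermite functions, by solving the kernel identity explicitly for $\widehat F$ and checking (using the super-exponential decay of $\phi_m$, which absorbs both the factor $e^{\beta|u|}$ and the substitution $v\mapsto-\log v$) that the resulting $F$ lies in $\mathcal{D}$; density of finite-rank operators in $S^1\oplus_1 S^1$ then finishes. Your argument needs to be replaced by something of this kind.

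On the continuity half, you correctly identify that one must get a \emph{trace-norm} bound, and you propose to factor the kernel into a product of two Hilbert--Schmidt operators — but you explicitly leave this factorization as "the main obstacle" and do not produce it, so the $S^1$-estimate is not actually established in your write-up. (For what it is worth, the paper is itself terse here: it verifies that the kernel is square-integrable and passes from Hilbert--Schmidt to trace class without exhibiting a factorization, so your instinct that an extra argument is needed at this point is sound; it just has to be carried out, e.g.\ by splitting off a Hilbert--Schmidt factor that absorbs part of the Gaussian/exponential decay in $u$ and part of the $e^{-s}$ coming from $\delta^2$.) Part (2) as you describe it — continuity from the definitions, density by cut-offs and mollification — matches the paper's one-line "typical smooth approximation procedure" and is fine.
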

\begin{proof}
(1) From the definition of $A(G,W_\beta)$ and \eqref{eq-Plancherel-L1} it suffices to show that the space
$$\tilde{\mathcal{D}} := \{W^+_\beta \pi_+(F)\circ \delta^2 \oplus W^-_\beta \pi_-(F)\circ \delta^2: F\in \mathcal{D}\}$$
is continuously embedded in $S^1(L^2(\Real))\oplus_1 S^1(L^2(\Real))\cong A(G)$ with a dense image. In fact, for $F\in \mathcal{D}$ we combine \eqref{eq-pi-pm} and \eqref{eq-DM-op} to get
$$(\pi_\pm(F) \delta^2) \xi(t) = \int_\Real \hat{F}^2(t-s, \pm e^{-s})e^{-s}\xi(s) \,ds$$
so that
$$(\Fc^\Real\circ W^\pm_\beta\pi_\pm(F) \delta^2)\eta(u) = \int_\Real e^{-\beta |u|}\widehat{F}(u, \pm e^{-s})e^{- ius}e^{-s}\eta(s) \,ds,$$
for $\xi, \eta \in L^2(\Real)$.
Note that the latter operator
is an integral operator with the kernel function
$$K(u,s) = e^{-\beta |u|}\widehat{F}(u, \pm e^{-s})e^{- ius}e^{-s}.$$
Taking $e^{-s} = v>0$ we get $\mp e^{-\beta |u|}\widehat{F}(u, \pm v)v^{- iu}$, which is clearly an $L^2$-function. Thus, the operator $W^\pm_\beta\pi_\pm(F) \delta^2$ is Hilbert-Schmidt, and consequently a trace-class operator. This explains the continuous embedding of $\tilde{\mathcal{D}}$ in $A(G)$.

For density we consider a dense subspace of $S^1(L^2(\Real))\oplus_1 S^1(L^2(\Real))$ consisting of all finite rank operators and show that it is contained in the space $\tilde{\mathcal{D}}$.
Recall the sequence $(\phi_n)_{n\ge 0}$ of the Hermite functions on $\Real$ and consider the rank 1 operator $T_{m,n}$ given by 
$T_{m,n}\xi = \la \xi, \phi_n\ra \phi_m$. Now it is enough to show that $T_{m,n} \oplus T_{m',n'} \in \tilde{\mathcal{D}}$ for any $m,n,m',n' \ge 0$.
Note that
$$(\Fc^\Real \circ T_{m,n})\eta = \la \eta, \phi_n\ra \Fc^\Real \phi_m = \frac{1}{\sqrt{2\pi}} (-i)^m\la \eta, \phi_n\ra \phi_m,$$
so that it is an integral operator with the kernel function
$$\tilde{K}(u,s) = \frac{ (-i)^m }{\sqrt{2\pi}}\phi_m(u)\phi_n(s).$$
Comparing the above two kernel functions $K$ and $\tilde{K}$ we get
$$e^{-\beta |u|}\widehat{F}(u, \pm e^{-s})e^{-ius}e^{-s} = \frac{ (-i)^m }{\sqrt{2\pi}}\phi_m(u)\phi_n(s).$$
Let us apply this observation for the first direct summand (or for the choice $+e^{-s}$ in the LHS) and for the second direct summand (or for the choice $-e^{-s}$ in the LHS) we use $m', n'\ge 0$ to get a function given by
$$\widehat{F}(u, v) = \begin{cases}
\frac{(-i)^m}{\sqrt{2 \pi}}\phi_m(u)\phi_n(-\log v)e^{\beta |u|}v^{- iu + 1}, & v>0\\
\frac{(-i)^{m'}}{\sqrt{2 \pi}}\phi_{m'}(u)\phi_{n'}(-\log (-v))e^{\beta |u|}(-v)^{ - iu + 1}, & v<0\\
0, & v=0
\end{cases}.$$
Note that the super-exponential decay of Gaussian functions guarantees that the above expression defines a continuous function $\widehat{F} \in L^2(\Real^2)$. 
Indeed, the term $|\phi_n(-\log v)v^{- iu + 1}| = |\phi_n(-\log v)|\cdot |v|$ requires some care and we have
$$v\cdot e^{-(\log v)^2/2}\le v\cdot \sqrt{e^{-6\log v}} = v^{-2},\;\;v>e^6$$
and clearly
$$\lim_{v\to 0^+}(v\cdot e^{-(\log v)^2/2}) = 0.$$
Furthermore, the super-exponential decay of Gaussian functions again tells us that $e^{\gamma |u|}\widehat{F}(u, v) \in L^2(\Real^2)$ for any $\gamma>0$, which means that $F\in \mathcal{D}$.

\vspace{0.5cm}

(2) A typical smooth approximation procedure gives us the conclusion we wanted.
\end{proof}

\begin{cor}
Let $W$ be a weight on the dual of $G$ extended from the subgroup $H = \{(a,0): a\in \Real\} \cong \Real$ as in Section \ref{subsec-weights}.
The topological algebra $\mathcal{E}$ is continuously embedded in $A(G,W)$ with a dense image and satisfies ${\rm Spec}\, \mathcal{E} \cong G^{\mathfrak{h}}\cong \Comp \times \Real$.
Furthermore, every element of $\mathcal{E}$ has a unique $H$-holomorphic extension to $G^{\mathfrak{h}}$.
Finally, every element in ${\rm Spec}A(G,W)$ acts on $\mathcal{E}$ as an evaluation functional at the points of $G^\h$.
\end{cor}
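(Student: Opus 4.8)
The plan is to mirror the nilpotent treatment of Section~\ref{sec-global-sol}, substituting the explicit Plancherel picture of the $ax+b$-group for the nilpotent-specific inputs and handling by hand the fact that $\h=\Span\{A\}$ is \emph{not} an ideal of $\g$ (indeed $[A,B]=B\notin\h$), so that Proposition~\ref{prop-partial-complexification-nilpotent} is unavailable. For the continuous dense embedding I would invoke Proposition~\ref{prop-density-D and E}, which already gives $\mathcal{E}\hookrightarrow\mathcal{D}\hookrightarrow A(G,W_\beta)$ continuously with dense image for every $\beta>0$, and then pass to a general $W=\iota(W_H)$: the weight function $w$ on $\widehat H\cong\Real$ satisfies $w\le C\,w_\beta$ for suitable $\beta,C>0$ by \eqref{eq-domination}, hence $W_\beta^{-1}\le C\,W^{-1}$, and since $W$ and $W_\beta$ are both functions of the self-adjoint operator $i\partial\lambda(A)$ (hence strongly commuting) Proposition~\ref{prop-weight-comparison} gives $A(G,W_\beta)\hookrightarrow A(G,W)$ continuously with dense image. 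Composition settles the first assertion.

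Next I would record the geometry by a direct computation, since $\Psi$ from \eqref{eq-coordinates-map} reduces to the global coordinates $(a,b)$ on this group: one finds $s\exp(icA)=(a+ic,\,e^{-ic}b)$ and $\exp(icA)s=(a+ic,\,b)$ for $s=(a,b)$ and $c\in\Real$. The first formula exhibits $(s,Y)\mapsto s\exp(iY)$ as a diffeomorphism of $G\times\h$ onto $G^\h\cong\Comp\times\Real$, taking the place of Proposition~\ref{prop-partial-complexification-nilpotent}, while the second shows that the naive holomorphic continuation in the first variable lands on the left-coset set $\{\exp(iY)s\}$, a genuinely different subset of $G_\Comp$. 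For the extension I would note that $\mathcal{E}\subseteq\mathcal{D}$, and that by the Fourier description of entire vectors for the one-parameter group $\lambda|_H$ (Proposition~\ref{prop-U_j-another-description}) the condition $e^{\gamma|u|}\widehat F\in L^2$ for all $\gamma>0$ defining $\mathcal{D}$ says exactly that its elements are entire vectors for $\lambda|_H$; the coset-wise holomorphy of $Z\mapsto\la\lambda(s)e^{d\lambda(Z)}f,g\ra$, the continuity, and the coset-wise identity theorem from the proof of Theorem~\ref{thm-partial-hol-ext} then carry over to produce an $H$-holomorphic extension, their only nilpotency-dependent input being the geometric diffeomorphism just recovered.

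For the Gelfand spectrum I would imitate Theorem~\ref{thm-spectrum-of-C}. A character $\varphi\in{\rm Spec}\,\mathcal{E}$ transports through $\Fc^{\Real^2}$ to a continuous functional on $C^\infty_c(\Real)\odot\mathcal{S}(\Real)$ that is multiplicative for convolution, and the Cauchy functional equation argument of \cite[Theorem~6.13]{GLLST} then produces $(c_1,c_2)$ with $\varphi\circ\Fc^{\Real^2}$ represented by integration against $e^{-i(c_1 t_1+c_2 t_2)}$; here the compactly supported factor $C^\infty_c(\Real)$ permits $c_1\in\Comp$, while the Schwartz factor $\mathcal{S}(\Real)$ forces $e^{-ic_2 t_2}$ to be tempered and hence $c_2\in\Real$. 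Reading this as an evaluation and composing with the inversion relating $\{\exp(iY)s\}$ to $G^\h$ --- the exact role of the check map $J$ in Theorem~\ref{thm-spectrum-of-C} --- yields ${\rm Spec}\,\mathcal{E}\cong G^\h\cong\Comp\times\Real$.

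Finally, for the action of ${\rm Spec}\,A(G,W)$ I would argue as in Remark~\ref{rem-indep-proof-spec}: the continuous dense embedding induces an injection ${\rm Spec}\,A(G,W)\hookrightarrow{\rm Spec}\,\mathcal{E}\cong G^\h$, so each element restricts to an evaluation at a point of $G^\h$, which one identifies as $s\exp(iX)$ for its representing operator $\lambda(s)e^{i\partial\lambda(X)}W^{-1}$ from Theorem~\ref{thm-spectrum-extended-weight-general} through the duality \eqref{eq-spec-duality}. The hard part will be threading this inversion consistently: because $\h$ is not an ideal, the right-coset domain $G^\h=\{s\exp(iY)\}$ and the left-coset domain $\{\exp(iY)s\}$ on which the first-variable continuation naturally lives are genuinely distinct, so the passage to $G^\h$ must be effected through the check (as in the transition from $\mathcal{C}$ to $\check{\mathcal{C}}$), and one must verify that this single inversion is compatible simultaneously with the extension, the spectrum computation, and the evaluation by ${\rm Spec}\,A(G,W)$.
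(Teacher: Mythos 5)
Your proposal is correct and follows essentially the same route as the paper's (very terse) proof: Proposition \ref{prop-density-D and E} together with \eqref{eq-domination} and Proposition \ref{prop-weight-comparison} for the continuous dense embedding, the Paley--Wiener theorem and the identification $G^{\mathfrak{h}}\cong \Comp\times\Real$ for the $H$-holomorphic extension, and a rerun of the proof of Theorem \ref{thm-spectrum-of-C} for the spectrum and the evaluation statements. The one place where you go beyond what the paper writes --- the explicit computation of $s\exp(icA)=(a+ic,e^{-ic}b)$ versus $\exp(icA)s=(a+ic,b)$, and the observation that since $\mathfrak{h}=\Span\{A\}$ is not an ideal the inversion/check step of Theorem \ref{thm-spectrum-of-C} must be threaded consistently through the extension, the spectrum computation, and the evaluation --- is precisely the content the paper compresses into ``we may repeat the proof of Theorem \ref{thm-spectrum-of-C},'' so the ``hard part'' you flag is a fair account of what that repetition actually requires rather than a gap in your argument.
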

\begin{proof}
For the density of $\mathcal{E}$ in $A(G,W)$ we appeal to Proposition \ref{prop-weight-comparison} and \eqref{eq-domination} to narrow down to the case of $W_\beta$, which can be covered by Proposition \ref{prop-density-D and E}.
For the unique $H$-holomorphic extension, we only need to note that $G^{\mathfrak{h}}\cong \Comp \times \Real$ and recall the Paley-Wiener theorem.
For the conclusion ${\rm Spec}\, \mathcal{E} \cong G^{\mathfrak{h}}$ and the last statement, we may repeat the proof of Theorem \ref{thm-spectrum-of-C}.
\end{proof}

\begin{rem}
The above corollary provides a proof of Theorem \ref{thm-spectrum-extended-weight-general}, which is independent of the results in \cite{GT}, for the $ax+b$-group $G$ using the same argument as in Remark \ref{rem-indep-proof-spec}.
\end{rem}

\emph{Acknowledgements}: H.H. Lee was supported by the Basic Science Research Program through the National Research Foundation of Korea (NRF) Grant NRF-2017R1E1A1A03070510 and the National Research Foundation of Korea (NRF) Grant funded by the Korean Government (MSIT) (Grant No.2017R1A5A1015626).




\bibliographystyle{abbrv}
\bibliography{ref}

\begin{thebibliography}{10}

\bibitem{BtD}
T.~Br{\"o}cker and T.~tom Dieck.
\newblock {\em Representations of compact Lie groups}.
\newblock Graduate Texts in Mathematics, 98. Springer-Verlag, New York, 1985.

\bibitem{Fuhr}
H.~F\"{u}hr.
\newblock {\em Abstract Harmonic Analysis of Continuous Wavelet Transforms}.
\newblock Lecture Notes in Mathematics. Springer Berlin, Heidelberg, 1 edition,
  2005.

\bibitem{FL15}
H.~Fujiwara and J.~Ludwig.
\newblock {\em Harmonic analysis on exponential solvable Lie groups}.
\newblock Springer Monogr. Math. Springer, 2015.

\bibitem{GLLST}
M.~Ghandehari, H.~H. Lee, J.~Ludwig, N.~Spronk, and L.~Turowska.
\newblock Beurling-fourier algebras on lie groups and their spectra.
\newblock {\em Advances in Mathematics}, 391:107951, 2021.

\bibitem{GT}
O.~Giselsson and L.~Turowska.
\newblock Beurling-fourier algebras and complexification, 2022.

\bibitem{Goodman_entire}
R.~Goodman.
\newblock Analytic and entire vectors for representations of lie groups.
\newblock {\em Transactions of the American Mathematical Society}, 143:55--76,
  1969.

\bibitem{Goodman_nilpotent}
R.~Goodman.
\newblock Complex fourier analysis on a nilpotent lie group.
\newblock {\em Transactions of the American Mathematical Society},
  160:373--391, 1971.

\bibitem{HH08}
D.~D. Haroske and H.~Triebel.
\newblock {\em Distributions, Sobolev spaces, elliptic equations}.
\newblock EMS Textbooks in Mathematics. European Mathematical Society (EMS),
  2008.

\bibitem{Hor66}
J.~Horvath.
\newblock {\em Topological Vector Spaces and Distributions, Vol. 1}.
\newblock Addison-Wesley, 1966.

\bibitem{Kaniuth}
E.~Kaniuth and A.~T.-M. Lau.
\newblock {\em Fourier and Fourier-Stieltjes Algebras on Locally Compact
  Groups}.
\newblock Mathematical Surveys and Monographs. American Mathematical Society,
  2018.

\bibitem{Knapp}
A.~W. Knapp.
\newblock {\em Lie Groups: Beyond an Introduction}.
\newblock Progress in Mathematics. Birkhäuser Boston, MA, 2002.

\bibitem{Kothe83}
G.~Köthe.
\newblock {\em Locally Convex Spaces. Fundamentals}.
\newblock Grundlehren der mathematischen Wissenschaften. Springer, Berlin,
  Heidelberg, 1 edition, 1983.

\bibitem{CG90}
P.~G. L.~J.~Corwin.
\newblock {\em Representations of nilpotent Lie groups and their applications.
  Part I. Basic theory and examples.}
\newblock Cambridge Studies in Advanced Mathematics, 18. Cambridge University
  Press, 1990.

\bibitem{LS12}
H.~H. Lee and E.~Samei.
\newblock Beurling-fourier algebras, operator amenability and arens regularity.
\newblock {\em J. Funct. Anal.}, 262:167--209, 2012.

\bibitem{LST12}
J.~Ludwig, N.~Spronk, and L.~Turowska.
\newblock Beurling-fourier algebras on compact groups: spectral theory.
\newblock {\em J. Funct. Anal.}, 262:463--499, 2012.

\bibitem{LT06}
J.~Ludwig and L.~Turowska.
\newblock Growth and smooth spectral synthesis in the fourier algebras of lie
  groups.
\newblock {\em STUDIA MATHEMATICA}, 176, 2006.

\bibitem{McK2}
K.~McKennon.
\newblock The complexification and differential structure of a locally compact
  group.
\newblock {\em Transactions of the American Mathematical Society},
  267:237--258, 1981.

\bibitem{Rudin3}
W.~Rudin.
\newblock {\em Functional Analysis}.
\newblock International Series in Pure and Applied Mathematics. McGraw-Hill, 2
  edition, 1991.

\bibitem{SchWol99}
H.~H. Schaefer and M.~P. Wolff.
\newblock {\em Topological Vector Spaces}.
\newblock Graduate Texts in Mathematics. Springer New York, NY, 2 edition,
  1999.

\bibitem{Schm12}
K.~Schmüdgen.
\newblock {\em Unbounded Self-adjoint Operators on Hilbert Space}.
\newblock Graduate Texts in Mathematics. Springer Dordrecht, 1 edition, 2012.

\bibitem{Takesaki2}
M.~Takesaki.
\newblock {\em Theory of Operator Algebras II}.
\newblock Encyclopaedia of Mathematical Sciences. Springer Berlin, Heidelberg,
  1 edition, 2003.

\bibitem{Taylor_harmonic}
M.~E. Taylor.
\newblock {\em Noncommutative Harmonic Analysis}.
\newblock Mathematical Surveys and Monographs. American Mathematical Society,
  1986.

\bibitem{Th75}
G.~E.~F. Thomas.
\newblock Integration of functions with values in locally convex suslin spaces.
\newblock {\em Transactions of the American Mathematical Society}, 212:61--81,
  1975.

\bibitem{Wa71}
L.~Waelbroeck.
\newblock {\em Topological Vector Spaces and Algebras}.
\newblock Lecture Notes in Mathematics. Vol. 230. Springer Berlin, Heidelberg,
  1 edition, 1971.

\end{thebibliography}

\end{document}